\newtheorem{thm}{Theorem}[section]
\newtheorem{prop}[thm]{Proposition}
\newtheorem{lem}[thm]{Lemma}
\newtheorem{cor}[thm]{Corollary}
\theoremstyle{definition}
\newtheorem{definition}[thm]{Definition}
\theoremstyle{remark}
\newtheorem{remark}[thm]{Remark}
\numberwithin{equation}{section}
\newcommand{\RR}{\mathbb{R}}
\newcommand{\ZZ}{\mathbb{Z}}
\newcommand{\NN}{\mathbb{N}}
\newcommand{\nn}{l}
\newcommand{\ppi}{q}
\newcommand{\qd}{\mathscr{A}}
\newcommand{\ee}{\epsilon}
\newcommand{\bb}{\mathfrak{b}_I}
\DeclareMathOperator{\Ker}{\mathrm{Ker}}
\newcommand{\hg}{\hat{g}}
\newcommand{\bG}{G}
\newcommand{\hG}{\hat{G}}
\newcommand{\fk}{\check{f}}
\newcommand{\gk}{\check{g}_{\alpha}}
\newcommand{\hk}{\check{g}_{\beta}}
\newcommand{\Symp}{\mathrm{Symp}^c}
\newcommand{\Ham}{\mathrm{Ham}^c}
\newcommand{\Homeo}{\mathrm{Homeo}}
\newcommand{\Sympeo}{\mathrm{Sympeo}}
\newcommand{\Supp}{\mathrm{Supp}}
\newcommand{\flux}{\mathrm{Flux}}
\newcommand{\tSymp}{\widetilde{\mathrm{Symp}_0^c}}
\newcommand{\Cal}{\mathrm{Cal}}
\newcommand{\Rot}{\mathrm{Rot}}
\begin{document}

\title[Commuting symplectomorphisms]{Commuting symplectomorphisms on a surface and the flux homomorphism}

\author[M. Kawasaki]{Morimichi Kawasaki}
\address[Morimichi Kawasaki]{Department of Mathematical Sciences, Aoyama Gakuin University, 5-10-1 Fuchinobe, Chuo-ku, Sagamihara-shi, Kanagawa, 252-5258, Japan}
\email{kawasaki@math.aoyama.ac.jp}

\author[M. Kimura]{Mitsuaki Kimura}
\address[Mitsuaki Kimura]{Department of Mathematics, Kyoto University, Kitashirakawa Oiwake-cho, Sakyo-ku, Kyoto 606-8502, Japan}
\email{mkimura@math.kyoto-u.ac.jp}

\author[T. Matsushita]{Takahiro Matsushita}
\address[Takahiro Matsushita]{Department of Mathematical Sciences, University of the Ryukyus, Nishihara-cho, Okinawa 903-0213, Japan}
\email{mtst@sci.u-ryukyu.ac.jp}

\author[M. Mimura]{Masato Mimura}
\address[Masato Mimura]{Mathematical Institute, Tohoku University, 6-3, Aramaki Aza-Aoba, Aoba-ku, Sendai 9808578, Japan}
\email{m.masato.mimura.m@tohoku.ac.jp}

\makeatletter
\@namedef{subjclassname@2020}{%
\textup{2020} Mathematics Subject Classification}
\makeatother

\subjclass[2020]{Primary 20F12, 20J05, 37E35, 53D35, 70H15; Secondary 20F36, 37A15, 37J05, 37J10, 57R17, 53D22}


\begin{abstract}

%



%

Let $(S,\omega)$ be a closed connected  oriented surface whose genus $l$ is at least two  equipped with a symplectic form.
Then we show the vanishing of the cup product of the fluxes of commuting symplectomorphisms. This result may be regarded as an obstruction for commuting symplectomorphisms. In particular, the image of an abelian subgroup of $\Symp_0(S, \omega)$ under the flux homomorphism is isotropic with respect to the natural intersection form on $H^1(S;\RR)$.
The key to the proof is  a refinement of  the non-extendability result,  previously given  by the first-named and second-named authors, for Py's Calabi quasimorphism $\mu_P$ on $\Ham(S, \omega)$.



\end{abstract}

\maketitle

\tableofcontents

\section{Introduction}\label{intro section}

\subsection{Main theorem}\label{subsec=mainthm}

Let $(M, \omega)$ be a connected symplectic manifold, and $\Symp(M, \omega)$ the group of symplectomorphisms with compact supports on $M$.
Let $\Symp_0(M, \omega)$ denote the identity component of $\Symp_0(M, \omega)$, and $\widetilde{\Symp_0}(M, \omega)$ its universal cover.
The {\it flux homomorphism} $\widetilde{\flux}_\omega \colon \widetilde{\Symp_0}(M, \omega) \to H_c^1(M; \RR)$ is defined by
\[\widetilde{\flux}_\omega([\{\psi^t\}_{t\in[0,1]}])=\int_0^1[\iota_{X_t}\omega]dt,\]
where  $X_t$ is the vector field generating the symplectic isotopy $\{\psi^t\}$.
The image of $\pi_1(\Symp_0(M, \omega))$ with respect to $\widetilde{\flux_\omega}$ is called the {\it flux group of $(M, \omega)$}, and the flux homomorphism descends  to a homomorphism
\[\flux_\omega \colon \Symp_0(M, \omega) \to H^1(M ; \RR) / \Gamma_\omega, \]
which is also called the flux homomorphism. The flux homomorphism is a fundamental object in symplectic geometry and theory of diffeomorphism groups, and has been extensively studied by many authors
such as \cite{Ban}, \cite{LMP}, \cite{Ke}, \cite{O}, \cite{Buh}, and \cite{KLM}.

Let $S$ be a closed orientable surface whose genus $l$ is at least two, and $\omega$ a volume form on $S$. In this case, since $\pi_1(\Symp_0(S , \omega))$ is trivial  (see Section~7.2 of \cite{P01} for example), the flux homomorphism gives a group homomorphism from $\Symp_0(S, \omega)$ to $H^1(S ; \RR)$.

The goal in this paper is to study the commuting elements in $\Symp_0(S, \omega)$ from the viewpoint of the flux homomorphism.
The main result in this paper is the following:

\begin{thm}[Main theorem, Theorem~$\ref{thm=main}$] \label{main thm}
Let $S$ be a closed orientable surface whose genus $l$ is at least two and  $\omega$ a symplectic form on $S$.
Assume that a pair $f,g \in \Symp_0(S,\omega)$ satisfies $fg = gf$. Then
\[ \flux_\omega(f) \smile \flux_\omega(g) = 0\]
holds true. Here, $\smile \colon H^1(S;\RR) \times H^1(S;\RR) \to H^2(S;\RR) \cong \RR$ denotes the cup product.
\end{thm}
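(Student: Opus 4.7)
The strategy is to lift Py's Calabi quasimorphism $\mu_P$ to a set-theoretic function on $\Symp_0(S,\omega)$ whose defect from being a homomorphism is prescribed explicitly in terms of the cup product of fluxes, and then to exploit the commutation $fg = gf$ to squeeze this cup product to zero by comparing the two orderings $fg$ and $gf$.

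The first and crucial step is to upgrade the earlier (qualitative) non-extendability theorem for $\mu_P$ on $\Ham(S,\omega)$ into a quantitative defect formula. Concretely, the plan is to produce a map $\widehat{\mu}\colon \Symp_0(S,\omega) \to \RR$ extending $\mu_P$ together with constants $C \ge 0$ and $\lambda \ne 0$ such that
\[
\bigl|\widehat{\mu}(\phi\psi) - \widehat{\mu}(\phi) - \widehat{\mu}(\psi) - \lambda\,\flux_\omega(\phi) \smile \flux_\omega(\psi)\bigr| \le C
\]
holds for all $\phi,\psi \in \Symp_0(S,\omega)$. Conceptually, the obstruction to extending $\mu_P$ to a genuine quasimorphism on $\Symp_0(S,\omega)$ defines a class in $H^2(H^1(S;\RR);\RR)$, and since the antisymmetric part of that space is spanned by the cup product (and $H^2(S;\RR)$ is one-dimensional), the cup product is the natural candidate for the defect; the refinement of non-extendability should pin down the proportionality constant $\lambda$.

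Granting the defect formula, the theorem follows quickly. Set $\alpha = \flux_\omega(f)$ and $\beta = \flux_\omega(g)$. For every positive integer $n$, commutation yields $f^n g^n = g^n f^n$, while $\flux_\omega(f^n) = n\alpha$ and $\flux_\omega(g^n) = n\beta$. Applying the defect formula to the ordered pairs $(f^n,g^n)$ and $(g^n,f^n)$ and subtracting, the terms $\widehat{\mu}(f^n)$, $\widehat{\mu}(g^n)$ and the two left-hand sides cancel, leaving
\[
\lambda n^2 \bigl(\alpha \smile \beta - \beta \smile \alpha\bigr) = O(1)
\]
uniformly in $n$. Since cup product on $H^1$ of a closed oriented surface is antisymmetric, this rewrites as $2\lambda n^2 (\alpha \smile \beta) = O(1)$, and letting $n \to \infty$ forces $\alpha \smile \beta = 0$.

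The main obstacle is the first step: producing an extension $\widehat{\mu}$ whose defect is given \emph{exactly} by $\lambda\,\flux_\omega \smile \flux_\omega$ up to a uniformly bounded error, rather than merely establishing that no bounded defect is achievable. This requires revisiting Py's construction (via averaged rotation numbers of lifts of symplectic isotopies to a suitable cover of $S$) and carefully tracking how a non-trivial symplectic flux perturbs those rotation numbers; the genus at least two hypothesis, through the hyperbolic geometry of $S$, should be precisely what makes such a pointwise defect identity possible. Once this identity is in hand, the commutation argument above is essentially a one-line algebraic wrap-up.
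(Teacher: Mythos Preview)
Step 2 is clean, but Step 1 --- which you correctly flag as the main obstacle --- is not closed, and the heuristic you give for it contains an error. The obstruction class lives in $H^2\bigl(H^1(S;\RR);\RR\bigr) \cong \Lambda^2\bigl(H^1(S;\RR)\bigr)^*$, whose dimension is $\binom{2l}{2}$, not $1$; the one-dimensionality of $H^2(S;\RR)$ is irrelevant here. There are many antisymmetric bilinear forms on $H^1(S;\RR)$ besides the intersection form, so ``the antisymmetric part of that space is spanned by the cup product'' is simply false, and the inference that the defect must be a multiple of the cup product does not follow from the dimension count you give. One would need an additional argument singling out the intersection form (for instance, equivariance under the mapping class group acting through $\mathrm{Sp}(2l,\ZZ)$), and you have not supplied one; note also that $\mu_P$ is only stated in the paper to be $\Symp_0$-invariant, not $\Symp$-invariant. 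Even granting that the cohomology class is the right one, producing an explicit set-theoretic extension $\widehat{\mu}$ whose coboundary equals $\lambda\,\flux_\omega\smile\flux_\omega$ up to a \emph{uniform} bounded error is a genuine analytic task --- the cup-product cocycle on $\Symp_0$ is unbounded, so this is not a bounded-cohomology statement --- and ``revisiting Py's construction and carefully tracking how flux perturbs rotation numbers'' is a description of the problem, not a proof.

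For comparison, the paper avoids any global defect formula. It argues by contradiction: assuming $\flux_\omega(f)\smile\flux_\omega(g)\ne 0$, it (i) builds explicit model symplectomorphisms on an embedded punctured torus and uses the Calabi property of $\mu_P$ on annuli to show that $\mu_P$ is \emph{not} extendable as a quasimorphism to $G_{\Lambda}=\flux_\omega^{-1}(\Lambda)$ for a suitably fine rank-two lattice $\Lambda$ containing $\flux_\omega(f)$ and $\flux_\omega(g)$; and (ii) observes that the commutation $fg=gf$ yields a homomorphic section on the finite-index sublattice $\langle\flux_\omega(f),\flux_\omega(g)\rangle\subset\Lambda$, which by a general virtual-splitting extension lemma forces $\mu_P$ to extend to $G_{\Lambda}$. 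The clash between (i) and (ii) gives the result. The commutator computations underlying (i) are exactly the kind of input your defect formula would require, but the paper packages them so that only values of $\mu_P$ on specific elements of $\Ham(S,\omega)$ are needed, never a uniform estimate over all of $\Symp_0(S,\omega)$.
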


In other words, for a pair, $f$ and $g$, of commuting elements in $\Symp_0(S,\omega)$, the intersection number $\bb (\flux_{\omega}(f),\flux_{\omega}(g))$ equals zero. Here, $\bb (\cdot, \cdot)$ denotes the intersection form on $H^1(S;\RR)$. See the beginning of Section~\ref{section=proof_mainthm} for more details.

Theorem \ref{main thm} states the relationship between commuting two elements in $\Symp_0(S,\omega)$ and the cup product of the images of these elements by the flux homomorphism for a closed surface $S$ of genus at least $2$. Results of this type date back to the following theorem of Rousseau, which treats the case of \emph{open} symplectic manifolds.
\begin{thm}[Rousseau {\cite[Proposition 4.1]{Rou}}] \label{rousseau}
Let $(M,\omega)$ be a $2n$-dimensional open symplectic manifold.
Then, for every $f,g\in \tSymp(M,\omega)$,
\[\Cal([f,g]) = n \cdot \widetilde{\flux}_\omega(f) \smile \widetilde{\flux}_\omega(g) \smile [\omega]^{n-1}.\]
Here, $\Cal$ is the Calabi homomorphism \textup{(}see Subsection $\ref{subsec=Calabi}$\textup{)} and we identify  $H_c^{2n}(M;\RR)$  with $\RR$.
\end{thm}
If $n=1$, then Theorem \ref{rousseau} implies a  result similar to  Theorem \ref{main thm} on open surfaces. We will discuss in Subsection~\ref{subsec=sketch} (Corollary~\ref{cor=open}).

We note that one of the keys to Theorem \ref{rousseau} is the existence of the well-defined Calabi homomorphism; this breaks down for the case where $M$ is \emph{closed}. 
For this reason, our proof of Theorem~\ref{main thm} is quite different from those of Theorem~\ref{rousseau} by Rousseau (and Corollary~\ref{cor=open}), as we will sketch the strategy in Subsection~\ref{subsec=sketch}.

 By combining Theorem \ref{main thm} with a theorem of Franks and Handel \cite{FH}, we obtain the following corollary:


\begin{cor}[Restriction on commuting symplectomorphisms in terms of the flux homomorphism]\label{Zn action}
Let $S$ be a closed connected orientable surface whose genus $\nn$ is at least two and $\omega$ a symplectic form on $S$.
Then, for every   nilpotent subgroup $A$ of $\Symp_0(S,\omega)$, the inequality.
\[ \dim_\RR \langle \flux_\omega (A) \rangle_\RR \le l.\]
holds true. Here, for a subset $T$ of a vector space over $\RR$, we write $\langle T \rangle_\RR$ to mean the $\RR$-linear subspace spanned by $T$.
\end{cor}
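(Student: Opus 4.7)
The plan is to derive Corollary~\ref{Zn action} as a quick linear-algebra consequence of the Main Theorem, once one recognizes that the cup product on $H^1(S;\RR)$ equips it with the structure of a symplectic vector space.

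First, I would recall that since $S$ is a closed oriented surface of genus $l \ge 2$, by Poincar\'e duality the cup product pairing
\[ \smile \colon H^1(S;\RR) \times H^1(S;\RR) \longrightarrow H^2(S;\RR) \cong \RR \]
is a non-degenerate skew-symmetric bilinear form on a $2l$-dimensional real vector space. In other words, it is a (linear) symplectic form on $H^1(S;\RR)$; equivalently, it is the intersection form $\bb$ mentioned after Theorem~\ref{main thm}.

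Next I would use the Main Theorem. Since $A$ is abelian, for every pair $f, g \in A$ we have $fg = gf$, hence $\flux_\omega(f) \smile \flux_\omega(g) = 0$. Because $\flux_\omega$ is a homomorphism, the image $\flux_\omega(A)$ is an additive subgroup of $H^1(S;\RR)$. By bilinearity of the cup product, the vanishing $\alpha \smile \beta = 0$ then extends from $\alpha,\beta \in \flux_\omega(A)$ to all $\alpha,\beta$ in the $\RR$-linear span $V := \langle \flux_\omega(A) \rangle_\RR$. Thus $V$ is an isotropic subspace of the symplectic vector space $(H^1(S;\RR), \smile)$.

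Finally, I would invoke the standard fact from linear symplectic algebra: any isotropic subspace of a $2l$-dimensional symplectic vector space has dimension at most $l$. (Indeed, if $V$ is isotropic then $V \subseteq V^{\perp}$, where $V^{\perp}$ denotes the symplectic orthogonal; since the form is non-degenerate, $\dim V + \dim V^{\perp} = 2l$, so $\dim V \le l$.) Applied to our $V$, this gives $\dim_\RR \langle \flux_\omega(A) \rangle_\RR \le l$, as desired. There is no real obstacle here: the whole content is packed into Theorem~\ref{main thm}, and the corollary follows from bilinearity together with the bound on isotropic subspaces.
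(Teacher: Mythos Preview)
Your proof is correct and follows essentially the same approach as the paper: both view $H^1(S;\RR)$ with the cup product as a symplectic vector space, use Theorem~\ref{main thm} to conclude that $\langle \flux_\omega(A)\rangle_\RR$ is isotropic, and then invoke the bound $\dim V \le l$ for isotropic subspaces. Your write-up merely spells out the bilinearity step and the $V \subseteq V^{\perp}$ argument that the paper leaves implicit.
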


 We note that Corollary~$\ref{Zn action}$ in particular applies to the case where $A$ is abelian.

\begin{proof}[Proof of Corollary~$\ref{Zn action}$ modulo Theorem~$\ref{main thm}$]
 Here we employ the result of Franks and Handel \cite[Corollary 1.10]{FH}, stating that
every nilpotent subgroup of $\Symp_0(S,\omega)$ is abelian.
Consider $H^1(S ; \RR)$ as a symplectic vector space whose symplectic form is provided by the intersection form. Set $V_A = \langle \flux_\omega (A) \rangle_\RR$. Then,
Theorem~\ref{main thm} implies that $V_A$ is isotropic subspace of $H^1(S ; \RR)$, and this means $\dim V_A \le l = \dim_\RR H^1(S ; \RR) / 2$.
\end{proof}

\begin{remark}
It is  not difficult
to see that there exist $l$ elements $f_1, \cdots, f_l$ of $\Symp_0( S, \omega)$ such that $\flux_{\omega}(f_1), \cdots, \flux_{\omega} (f_l)$ are linearly independent (see Remark~\ref{remark tightness 1}).
Hence, the inequality in Corollary~\ref{Zn action} is tight.
\end{remark}

\begin{remark}
 Although for every abelian subgroup $A$ of $\Symp_0(S,\omega)$ the image of $A$ under $\flux_\omega$ is contained in some $l$-dimensional vector space, it is  not difficult to see that the rank of the image as a $\ZZ$-module can be large. To see this, let $X$ be a vector field on $S$ such that $\mathcal{L}_X\omega=0$ and $[\iota_X\omega]$ is a non-trivial element of $H^1(S;\RR)$.
Let $\{\varphi_X^t \}_{t\in\RR}$ denote the flow generated by $X$.
Let $\Phi \colon \ZZ^{\oplus \NN} \to\Symp_0(S,\omega)$ be the homomorphism defined by
\[ \Phi(k_1, k_2, \ldots,)=\varphi_X^{ \sum_{i=1}^\infty k_i \sqrt{p_i}},\]
where $p_i$ is the $i$-th smallest prime number. Here $\ZZ^{\oplus \NN}$ is the free abelian group of countably infinite rank. Then,  we can show  that $\flux_\omega \circ \Phi$ is injective.

\end{remark}

Note that Corollary~\ref{Zn action} immediately implies the following result, which was recently proved by the first and second authors.

\begin{cor}[Kawasaki--Kimura {\cite[Corollary 1.9]{KK}}] \label{cor Kawasaki-Kimura}
Let $S$ be a closed connected  orientable surface whose genus is at least two, and $\omega$ a symplectic form on $S$. Then, the flux homomorphism
\[
\flux_\omega \colon \Symp_0(S, \omega) \to H^1(S ; \RR)
\]
does \emph{not} have a section homomorphism.
\end{cor}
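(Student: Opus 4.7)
The plan is to argue by contradiction, reducing the statement to Corollary~\ref{Zn action}. Suppose that a section homomorphism $s \colon H^1(S;\RR) \to \Symp_0(S,\omega)$ exists. Since the domain $H^1(S;\RR)$ is abelian, its image $A := s(H^1(S;\RR))$ is an abelian subgroup of $\Symp_0(S,\omega)$. The section condition $\flux_\omega \circ s = \mathrm{id}_{H^1(S;\RR)}$ then forces
\[
\flux_\omega(A) \;=\; \flux_\omega(s(H^1(S;\RR))) \;=\; H^1(S;\RR).
\]

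Applying Corollary~\ref{Zn action} to the abelian subgroup $A$ yields $\dim_\RR \langle \flux_\omega(A) \rangle_\RR \le l$. But by the previous display, $\langle \flux_\omega(A)\rangle_\RR = H^1(S;\RR)$, whose real dimension is $2l$. Since $l \ge 2$ (in fact $l \ge 1$ already suffices), we obtain $2l > l$, a contradiction.

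In this instance there is no substantive obstacle to overcome: the whole argument is a one-line deduction from Corollary~\ref{Zn action}. The only conceptual point worth flagging is the standing convention, indicated by the phrase \emph{section homomorphism}, that a section is required to be a group homomorphism (as opposed to merely a set-theoretic right inverse); this is exactly what lets us pass the abelianness of $H^1(S;\RR)$ along to the image $A$ and thereby apply the corollary.
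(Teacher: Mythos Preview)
Your argument is correct and follows exactly the route the paper indicates: the text states that Corollary~\ref{Zn action} immediately implies this result, and your contradiction via $\dim_\RR H^1(S;\RR)=2l>l$ is precisely that immediate deduction.
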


\subsection{Strategy of the proof}\label{subsec=sketch}

The proof of Theorem~\ref{main thm} is obtained by examining the extendability of certain quasimorphisms on the group of Hamiltonian diffeomorphisms $\Ham(S, \omega)$ on $(S, \omega)$.
Recall that a real-valued function $\mu \colon G \to \RR$ on a group $G$ is called a {\it quasimorphism} if there exists a non-negative real number $D \ge 0$ satisfying
\[
|\mu(xy) - \mu (x) - \mu(y)| \le D
\]
for every $x,y\in G$. The smallest  such $D$ is called the {\it defect} of $\mu$, and denoted by $D(\mu)$. A quasimorphism $\mu$ is said to be {\it homogeneous} if $\mu(g^{ m}) = m \cdot \mu(g)$ for every $g \in G$ and for every $m \in \ZZ$.

Suppose that $G$ is a normal subgroup of another group $\hat{G}$, and that $\mu$ is a homogeneous quasimorphism on $G$. We say that $\mu$ is {\it $\hat{G}$-invariant} if
\[
\mu(gxg^{-1}) = \mu(x)
\]
for every $g \in \hG$ and for every $x \in \bG$. We say that $\mu$ is {\it extendable to $\hat{G}$} if there exists a homogeneous quasimorphism $\hat{\mu}$ on $\hat{G}$ such that the restriction $\hat{\mu}|_{G}$ of $\hat{\mu}$ to $G$ coincides with $\mu$.  If a homogeneous quasimorphism on $G$ is extendable to $\hat{G}$, then it must be  $\hat{G}$-invariant; see Lemma~\ref{lem=inv}. See Subsection~\ref{subsec=qm} for more details on these concepts.

As is expected, there exists a $\hat{G}$-invariant homogeneous quasimorphism which is not extendable.
Our main investigation is the extendability of Py's Calabi quasimorphism $\mu_P$ (\cite{Py06}).
See Subsections~\ref{subsec=mu_P_const} and \ref{subsec=mu_P_property} for details on $\mu_P$.
The proof of Theorem \ref{main thm} is obtained by observing the extendability and non-extendability to certain subgroups of $\Symp_0(S, \omega)$ containing $\Ham (S, \omega)$.
Our main tool for the extendability of quasimorphisms is \cite[Proposition 1.6]{KKMM20}, which may be seen as an extension theorem for \emph{discrete groups}.
We will recall the statement in Proposition~\ref{prop=discrete} in Section~\ref{section=proof_mainthm}; it asserts that
if the short exact sequence
\[ 1 \to \bG \to \hG \xrightarrow{\ppi} Q \to 1\]
of groups \emph{virtually splits}, then every $\hG$-invariant homogeneous quasimorphism on $\bG$ is extendable to $\hG$.
Our main tool for proving the non-extendability is the following, which may be seen as a main part of Theorem \ref{main thm}. Here, $\langle \cdot \rangle$ denotes the subgroup generated by $\cdot$.
The positive integer $k_0$ appearing in the following theorem can be explicitly estimated in terms of $\bar{w}$ , $\nn$  and ${\rm area}(S)$; see Theorem~\ref{thm=non_ext} and Remark~\ref{remark=normalize}.

\begin{thm}[Non-extendability of Py's Calabi quasimorphism, see also Theorem~$\ref{thm=non_ext}$ for explicit $k_0$] \label{thm A}
Let $S$ be a closed connected orientable surface whose genus $\nn$ is at least two and $\omega$ a symplectic form on $S$.
Let $\bar{v}$, $\bar{w} \in H^1(S;\RR)$ with $\bar{v} \smile \bar{w}  \neq  0$. For a positive integer $k$, set $\Lambda_k = \langle \bar{v}, \bar{w} / k \rangle$ and $G_{\Lambda_k} = \flux_\omega^{-1}(\Lambda_k)$.
Then, there exists a positive integer $k_0$ such that $\mu_P$ is {\rm not} extendable to $G_{\Lambda_k}$ for every $k \ge k_0$.
\end{thm}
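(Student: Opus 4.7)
The plan is to argue by contradiction: suppose $\mu_P$ admits a homogeneous extension $\hat\mu$ on $G_{\Lambda_k}$ with defect $D:=D(\hat\mu)<\infty$, and exhibit an unbounded family of $\mu_P$-values realized on commutators of $G_{\Lambda_k}$-elements. The starting observation is the standard fact that a homogeneous quasimorphism is conjugation-invariant, whence $|\hat\mu([\phi,\psi])|\le D$ for all $\phi,\psi\in G_{\Lambda_k}$. Since every such commutator lies in $\mathrm{Ham}(S,\omega)$, where $\hat\mu$ coincides with $\mu_P$, extendability forces the uniform bound
\[
\sup_{\phi,\psi\in G_{\Lambda_k}}|\mu_P([\phi,\psi])|\le D.
\]
A contradiction will follow as soon as the left-hand side is shown to be infinite.

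To build the family, I would select divergence-free vector fields $X,Y$ on $(S,\omega)$ with $[\iota_X\omega]=\bar v$ and $[\iota_Y\omega]=\bar w$, and set $f=\phi_X^1$ and $g=\phi_Y^{1/k}$. Both belong to $G_{\Lambda_k}$ since their fluxes are $\bar v$ and $\bar w/k$. For each Hamiltonian $h\in\mathrm{Ham}(S,\omega)$, the modification $gh$ is still in $G_{\Lambda_k}$, so the commutators $[f,(gh)^k]$ form an infinite family of Hamiltonian diffeomorphisms expressible as single $G_{\Lambda_k}$-commutators. I would then evaluate $\mu_P$ on this family, refining the computation of Kawasaki--Kimura \cite{KK}. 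For $k$ large, $\phi_Y^{1/k}$ translates points on a fundamental domain of the hyperbolic universal cover by a controllably small amount, and the Py rotation cocycle applied to $[f,(gh)^k]$ decomposes as a linear-in-$\mathrm{Cal}(h)$ main term plus a term depending on $\bar v\smile \bar w$ and a uniformly bounded error. Choosing $h$ supported in a small Darboux disk with $\mathrm{Cal}(h)$ arbitrarily large would then drive $\mu_P([f,(gh)^k])$ to infinity, contradicting the bound above.

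The main obstacle is the explicit quantitative evaluation of $\mu_P$ on the test commutator. Py's quasimorphism is defined via an averaging construction on the universal cover of $S$, and the existing non-extendability of Kawasaki--Kimura \cite{KK} is qualitative at the level of the full group $\mathrm{Symp}_0(S,\omega)$. The refinement required here is to pin down the dependence of $\mu_P([f,(gh)^k])$ on $k$ while keeping both fluxes trapped inside the lattice $\Lambda_k=\langle\bar v,\bar w/k\rangle$; this requires a careful local analysis near the support of $(gh)^k$, paralleling Py's original computation but tracking the displacement of $\phi_Y^{1/k}$. The threshold $k_0$ then emerges naturally as the smallest $k$ for which $\phi_Y^{1/k}$ moves points by less than the injectivity radius of $(S,\omega)$, producing the explicit dependence on $\bar w$, $l$ and $\mathrm{area}(S)$ recorded in Theorem~\ref{thm=non_ext} and Remark~\ref{remark=normalize}.
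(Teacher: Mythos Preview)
Your opening reduction is correct and matches the paper: if $\mu_P$ extends to a homogeneous $\hat\mu$ on $G_{\Lambda_k}$ then $|\mu_P([\phi,\psi])|\le D(\hat\mu)$ for all $\phi,\psi\in G_{\Lambda_k}$, and unboundedness there is a contradiction. The gap is your mechanism for unboundedness. The claimed decomposition of $\mu_P([f,(gh)^k])$ as ``linear-in-$\mathrm{Cal}(h)$ main term plus a $\bar v\smile\bar w$ term plus bounded error'' is unjustified, and a sanity check suggests it cannot hold as stated: your blow-up is driven by $\mathrm{Cal}(h)\to\infty$, with no essential use of $\bar v\smile\bar w\neq 0$, yet when $\bar v\smile\bar w=0$ the paper shows (Theorem~\ref{thm extendable}) that $\mu_P$ \emph{does} extend to the preimage of the span of $\bar v,\bar w$, hence is bounded on such commutators. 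So any linear $\mathrm{Cal}(h)$-term would have to carry a coefficient that vanishes precisely when the cup product does, and nothing in your sketch produces this. Working directly with Py's rotation cocycle for commutators of \emph{generic} flow maps $\phi_X^1,\phi_Y^{1/k}$ is also far harder than you suggest; no such formula is available in \cite{Py06} or \cite{KK}.

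The paper's route is different in substance. Rather than generic flows perturbed by $h$, it builds explicit symplectomorphisms $\sigma_{\qd},\sigma'_{\qd},\tau_{\qd},\tau'_{\qd}$ on punctured tori embedded in each handle of $S$ (Section~\ref{section=fluxlift}), arranged so that $[\sigma_{\qd},\tau_{\qd}]$ and $[\sigma'_{\qd},\tau_{\qd}^{\prime -1}]$ are \emph{explicit} Hamiltonian flows supported in annuli. The key computational input is the Calabi property of $\mu_P$ on annuli (Theorem~\ref{thm=mu_P}(4)), a feature special to Py's quasimorphism and not shared by $\mu_B$, which reduces the $\mu_P$-evaluation to an elementary integral (Proposition~\ref{prop=flux}). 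The unbounded parameter is a power $m$: one assembles $\check{f}_m$ with flux $mv\in\Lambda_k$ and obtains $\mu_P(\gamma_m)\sim_{D(\mu_P)} m\,\bb(v,w)$ (Proposition~\ref{prop=intersection}), where $\gamma_m=[\check{f}_m,\check{g}_\alpha][\check{f}_m,\check{g}_\beta^{-1}]^{-1}$. Note this is \emph{not} a single commutator: the auxiliaries $\check{g}_\alpha,\check{g}_\beta$ carry only the $\alpha$- and $\beta$-parts of $w$ and individually lie outside $G_{\Lambda_k}$; Proposition~\ref{prop=ext_estimate} uses conjugation by $\check{g}_\alpha\check{g}_\beta\in G_{\Lambda_k}$ to show $|\hat\phi(\gamma_m)|\le 3D(\hat\phi)$ nonetheless. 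Finally, the threshold $k_0$ arises not from an injectivity-radius bound on $\phi_Y^{1/k}$ but from the constraint $|c_j|,|d_j|\le\epsilon$ needed to fit the local model into the embedded punctured tori.
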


Theorem~\ref{thm A} in particular yields the following corollary.

\begin{cor}\label{strong Py nonex}
Let $S$ be a closed connected orientable surface whose genus $\nn$ is at least two, $\omega$ a symplectic form on $S$ and $V$ a linear subspace of $H^1(S;\RR)$.
If $\mathrm{dim}_\RR(V) > \nn$, then there does \emph{not} exist a homogeneous quasimorphism $\hat{\mu}$ on $\flux_\omega^{-1}(V)$ such that $\hat{\mu}|_{\Ham(S, \omega)} = \mu_P$.
\end{cor}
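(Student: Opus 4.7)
The plan is to deduce the corollary directly from Theorem~\ref{thm A}, using the dimensional hypothesis $\dim_\RR V > \nn$ only to produce a pair of classes in $V$ with non-vanishing cup product. Since the intersection form equips $H^1(S;\RR)$ with the structure of a $2\nn$-dimensional symplectic vector space, every isotropic subspace has dimension at most $\nn$. Hence when $\dim_\RR V > \nn$, the subspace $V$ cannot be isotropic, so I can choose $\bar{v}, \bar{w} \in V$ with $\bar{v} \smile \bar{w} \neq 0$; in particular $\bar{v}$ and $\bar{w}$ are linearly independent and both nonzero.

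Next, I would apply Theorem~\ref{thm A} to this pair to obtain a positive integer $k_0$ such that $\mu_P$ is not extendable to $G_{\Lambda_k} = \flux_\omega^{-1}(\langle \bar{v}, \bar{w}/k \rangle)$ for every $k \ge k_0$. The key bookkeeping point is that $V$, being an $\RR$-linear subspace, contains $\bar{w}/k$ for every positive integer $k$; therefore $\Lambda_k \subset V$, and consequently $G_{\Lambda_k} \subset \flux_\omega^{-1}(V)$.

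To finish, I would argue by contradiction: suppose a homogeneous quasimorphism $\hat{\mu}$ on $\flux_\omega^{-1}(V)$ satisfies $\hat{\mu}|_{\Ham(S,\omega)} = \mu_P$. Since every Hamiltonian has vanishing flux and $0 \in \Lambda_k$, we have $\Ham(S,\omega) \subseteq G_{\Lambda_k}$. Thus the restriction $\hat{\mu}|_{G_{\Lambda_k}}$ is a homogeneous quasimorphism on $G_{\Lambda_k}$ whose further restriction to $\Ham(S,\omega)$ still equals $\mu_P$; fixing any $k \ge k_0$ then contradicts Theorem~\ref{thm A}, so no such $\hat{\mu}$ can exist.

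The argument is essentially a formality once Theorem~\ref{thm A} is in hand; I do not expect a substantive obstacle beyond recognizing that the hypothesis $\dim_\RR V > \nn$ is used exclusively through the existence of a non-isotropic pair in $V$, which in turn relies on the nondegeneracy of the cup product pairing on the $2\nn$-dimensional space $H^1(S;\RR)$.
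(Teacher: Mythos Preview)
Your argument is correct and matches the paper's intended deduction: the paper simply states that Corollary~\ref{strong Py nonex} follows from Theorem~\ref{thm A}, and the steps you spell out (use $\dim_\RR V>\nn$ to find $\bar v,\bar w\in V$ with $\bar v\smile\bar w\ne 0$, then observe $\Lambda_k\subset V$ so a putative extension to $\flux_\omega^{-1}(V)$ restricts to one on $G_{\Lambda_k}$, contradicting Theorem~\ref{thm A}) are exactly the implicit ones.
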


We note that Corollary \ref{strong Py nonex} was proved by the first and second authors when $V=H^1(S;\RR)$ \cite[Theorem 1.11]{KK}.

\begin{remark}
There exists an abelian subgroup $A$ of $\Symp_0(S,\omega)$ such that $\dim_{\RR}V_A=l$; see Remark~\ref{remark tightness 1}.
Hence, the estimation of the dimension of Corollary~\ref{strong Py nonex} is optimal.
\end{remark}

 As we mentioned in Subsection~\ref{subsec=mainthm}, Theorem~\ref{rousseau} by Rousseau immediately yields the following result on open symplectc surfaces.

\begin{cor}\label{cor=open}
Let $S$ be an \emph{open} symplectic surface and  $\omega$ a symplectic form on $S$.
Assume that a pair $f,g \in \tSymp(S,\omega)$ satisfies $fg = gf$. Then
\[ \widetilde{\flux}_\omega(f) \smile \widetilde{\flux}_\omega(g) = 0\]
holds true. 
\end{cor}

\begin{proof}
Since $fg=gf$, we have $[f,g]=\mathrm{id}$. Therefore, Theorem~\ref{rousseau} for $n=1$ implies that
\[
\widetilde{\flux}_\omega(f) \smile \widetilde{\flux}_\omega(g)=\Cal([f,g])=0,
\]
as desired.
\end{proof}
We compare our proof of Theorem~\ref{main thm} with that of Corollary~\ref{cor=open}. Corollary~\ref{cor=open} follows from Theorem \ref{rousseau}, and Rousseau's proof of Theorem \ref{rousseau} is based on elaborate calculation of vector fields. As we sketched in this subsection, our strategy of the proof of Theorem~\ref{main thm} is quite different from this. 
Nevertheless, the proof of Theorem \ref{main thm} has the following similarity to that of Corollary~\ref{cor=open}. 
For the proofs of Corollary~\ref{cor=open} (and Theorem \ref{rousseau}), the Calabi \emph{homomorphism} connects the commutator $[f,g]$ of $f,g\in \tSymp(S,\omega)$ and the cup product of these images by the flux homomorphism. 
In our proof of Theorem \ref{main thm}, we employ Py's Calabi \emph{quasimorphism} to connect these objects if $[f,g]=\mathrm{id}$. 
Here we note that $\Ham(S,\omega)$ is a simple group if $S$ is a closed surface \cite{Ban}  so that there does \emph{not} exist a non-zero homomorphism in our setting. 
It may be natural in our strategy to use Py's Calabi quasimorphism as a counterpart of the Calabi homomorphism (in the case of open symplectic manifolds) to prove Theorem~\ref{main thm}.

\begin{remark}\label{rem=fullgenRousseau}
A natural question might be to ask whether we can obtain a corresponding result of Theorem~\ref{rousseau} itself (not only Corollary~\ref{cor=open}) to the case of a closed symplectic manifold $(M,\omega)$. However, there does \emph{not} exist any non-zero homogeneous quasimorphism $\mu$ on $\widetilde\Ham(M,\omega)$ such that for every $f,g\in \tSymp(M,\omega)$,
\[
\mu([f,g])=\widetilde\flux_{\omega}(f)\smile \widetilde\flux_{\omega}(g) \smile [\omega]^{n-1}
\]
holds, where $\dim M = 2n$. Indeed, suppose that such $\mu$ exists. 
Then, since $\Ker(\widetilde\flux_{\omega})=\widetilde\Ham (M,\omega)$ (Propsoition \ref{survey on flux}), we must have for every $f,g\in \widetilde\Ham(M,\omega)$,
\[
\mu([f,g])=\widetilde\flux_{\omega}(f)\smile \widetilde\flux_{\omega}(g)\smile [\omega]^{n-1}=0.
\]
By Lemma~\ref{lem=Bavard}, this implies that $\mu$ is a genuine homomorphism on $\widetilde\Ham(M,\omega)$. However, it forces $\mu$ to be the zero-map since $\widetilde\Ham(M,\omega)$ is a simple group, a contradiction.

In this aspect, Theorem~\ref{main thm} might \emph{not} be a straightforward variant of Theorem~\ref{rousseau} (or Corollary~\ref{cor=open}).
\end{remark}


\subsection{$C^0$-version of the  Main Theorem}

We can formulate our main theorem in the case of symplectic homeomorphisms.
Recall that $\Sympeo(S, \omega)$ denotes the $C^0$-closure of $\Symp_0(S,\omega)$ in the group $\Homeo(S)$ of homeomorphisms of $S$. Let $\Sympeo_0(S,\omega)$ be the identity component of $\Sympeo(S,\omega)$, and let $\overline{\Ham(S,\omega)}^{C^0}$ denote the $C^0$-closure of $\Ham(S,\omega)$ in $\Homeo(S,\omega)$.
As we will explain in Section \ref{section=C^0}, when the genus of $S$ is at least two, Fathi \cite{Fa} showed that the flux $\flux_\omega \colon \Symp_0(S, \omega) \to H^1(S;\RR)$ can be extended to as a continuous homomorphism $\theta_\omega \colon \Sympeo_0(S,\omega) \to H^1(S;\RR)$, whose kernel is $\overline{\Ham(S,\omega)}^{C^0}$:
\[ 1 \to \overline{\Ham(S,\omega)}^{C^0} \to \Sympeo_0(S,\omega) \xrightarrow{\theta_\omega} H^1(S;\RR) \to 1.\]
See Proposition~\ref{survey on mfh} in Section~\ref{section=C^0} for more details.

The following theorem is the $C^0$-version of our main theorem.

\begin{thm}[Main theorem in the $C^0$ setting, Theorem~$\ref{thm=C^0}$]\label{C0 main thm}
Let $S$ be a closed connected orientable surface whose genus $\nn$ is at least two, and $\omega$ a symplectic form on $S$. Assume that a pair $f,g \in \Sympeo_0(S,\omega)$ satisfies $fg = gf$.
 Then,
 \[\theta_\omega(f) \smile \theta_\omega(g) = 0\]
 holds true.
\end{thm}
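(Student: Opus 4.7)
My plan is to mirror the proof of the smooth Theorem~\ref{main thm} in the $C^0$ category, with $\flux_\omega$ replaced by Fathi's continuous extension $\theta_\omega$ (Proposition~\ref{survey on mfh}) and $\Ham(S,\omega)$ replaced by $\overline{\Ham(S,\omega)}^{C^0}$, so that the formal structure of the argument is preserved.

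The first ingredient is a $\Sympeo_0$-invariant homogeneous quasimorphism $\bar\mu_P$ on $\overline{\Ham(S,\omega)}^{C^0}$ extending Py's Calabi quasimorphism $\mu_P$. This should follow from the $C^0$-continuity of $\mu_P$, which in turn follows from the $C^0$-continuity of the Calabi-type invariants out of which Py constructs $\mu_P$; one takes the unique continuous extension to the $C^0$-closure and deduces $\Sympeo_0$-invariance from the $\Symp_0$-invariance of $\mu_P$ via a density-plus-continuity argument. Once $\bar\mu_P$ is in hand, the $C^0$-analogue of Theorem~\ref{thm A} is immediate from the smooth version by restriction: any homogeneous quasimorphism on $\theta_\omega^{-1}(\Lambda_k) \subset \Sympeo_0(S,\omega)$ extending $\bar\mu_P$ restricts to a homogeneous quasimorphism on $\flux_\omega^{-1}(\Lambda_k) = \theta_\omega^{-1}(\Lambda_k) \cap \Symp_0(S,\omega)$ extending $\mu_P$, which is forbidden by Theorem~\ref{thm A} for $k \ge k_0$.

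Given these ingredients, the conclusion follows exactly as in the smooth proof. Assuming for contradiction $\bar v \smile \bar w \ne 0$ with $\bar v = \theta_\omega(f)$ and $\bar w = \theta_\omega(g)$, the classes are $\RR$-linearly independent, so $\Lambda = \langle \bar v, \bar w\rangle$ is a finite-index subgroup of $\Lambda_k = \langle \bar v, \bar w/k\rangle$ for every $k$. The commutativity $fg = gf$ yields a section $\Lambda \to \theta_\omega^{-1}(\Lambda_k)$ via $(m,n) \mapsto f^m g^n$, hence a virtual splitting of
\[ 1 \to \overline{\Ham(S,\omega)}^{C^0} \to \theta_\omega^{-1}(\Lambda_k) \to \Lambda_k \to 1. \]
Applying Proposition~\ref{prop=discrete}, which is purely group-theoretic and thus carries over verbatim to the $C^0$ setting, then produces an extension of $\bar\mu_P$ to $\theta_\omega^{-1}(\Lambda_k)$ for every $k$; for $k \ge k_0$ this contradicts the $C^0$-analogue of Theorem~\ref{thm A} established above.

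The main obstacle I expect is establishing the $C^0$-continuity of $\mu_P$ and producing the continuous extension $\bar\mu_P$. Py's construction of $\mu_P$ is a delicate average of Calabi-type invariants over geodesic data on $S$, and propagating $C^0$-continuity through this averaging procedure—together with verifying that the resulting $\bar\mu_P$ is genuinely a homogeneous quasimorphism with the required $\Sympeo_0$-invariance—is the main analytic content of the $C^0$ version, whereas all the group-theoretic steps carry over essentially by formal parallel to the smooth case.
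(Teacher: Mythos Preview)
Your plan has a genuine gap at its first step: $\mu_P$ is \emph{not} $C^0$-continuous, so no continuous extension $\bar\mu_P$ to $\overline{\Ham(S,\omega)}^{C^0}$ exists. This is recorded explicitly in Remark~\ref{remark=disconti}: since $\mu_P$ is a Calabi quasimorphism, its restriction to $\Ham(D,\omega|_D)$ for small discs $D$ equals the Calabi homomorphism $\mathrm{Cal}_D$, which is nontrivial; by the Entov--Polterovich--Py criterion (Theorem~\ref{mfh property}) this forces $C^0$-discontinuity. Your suggestion that ``$C^0$-continuity of the Calabi-type invariants out of which Py constructs $\mu_P$'' would propagate through is therefore not salvageable --- the Calabi property is precisely the obstruction.

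The paper circumvents this by bringing in a second Calabi quasimorphism $\mu_B$ of Brandenbursky. The two key facts are: (i) $\mu_B$ is the restriction to $\Ham(S,\omega)$ of a homogeneous quasimorphism $\hat\mu_B$ defined on all of $\Symp_0(S,\omega)$ (Theorem~\ref{thm=mu_P}~(2)); and (ii) the \emph{difference} $\mu_{P,B}=\mu_P-\mu_B$ \emph{is} $C^0$-continuous, since the Calabi contributions on small discs cancel (Corollary~\ref{cor=C^0-conti}). One then extends $\mu_{P,B}$ continuously to $\overline{\mu}_{P,B}$ on $\overline{\Ham(S,\omega)}^{C^0}$, runs your virtual-splitting argument with Proposition~\ref{prop=discrete} on $\overline{\mu}_{P,B}$ to obtain an extension $\overline{\mu}_{P,B}^{\Lambda}$ to $\overline{G}_\Lambda=\theta_\omega^{-1}(\Lambda)$, restricts to the smooth subgroup $G_\Lambda=\flux_\omega^{-1}(\Lambda)$, and finally \emph{adds back} $\hat\mu_B|_{G_\Lambda}$. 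The result is a homogeneous quasimorphism on $G_\Lambda$ restricting to $\mu_P$ on $\Ham(S,\omega)$, contradicting Theorem~\ref{thm A} exactly as in your final paragraph. In short, your overall architecture is right, but the missing idea is to subtract off an extendable Calabi quasimorphism to obtain $C^0$-continuity, and then reinstate it after restricting to the smooth setting.
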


By the same argument of the proof of Corollary~\ref{Zn action}, we have the following:

\begin{cor}\label{C0 Zn action}
Let $S$ be a closed connected orientable surface whose genus $\nn$ is at least two and $\omega$ a symplectic form on $S$.
Then, for every abelian subgroup $A$ of $\Sympeo_0(S, \omega)$, the inequality
\[\mathrm{dim}_\RR \; \langle \theta_\omega (A) \rangle_\RR \le \nn\]
holds true.
\end{cor}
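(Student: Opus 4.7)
The plan is to deduce Corollary~\ref{C0 Zn action} from Theorem~\ref{C0 main thm} by the same linear-algebraic argument that yielded Corollary~\ref{Zn action} from Theorem~\ref{main thm}. The key observation is that, for a closed oriented surface $S$ of genus $l \ge 2$, the space $H^1(S;\RR)$ is a symplectic vector space of dimension $2l$ under the intersection form $\bb$, which by Poincar\'e duality coincides (up to sign) with the cup product pairing $H^1(S;\RR) \times H^1(S;\RR) \to H^2(S;\RR) \cong \RR$.

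First, I would set $V_A := \langle \theta_\omega(A) \rangle_\RR$ and show that $V_A$ is isotropic with respect to $\bb$. To this end, take any two elements of the spanning set $\theta_\omega(A)$, say $\alpha = \theta_\omega(f)$ and $\beta = \theta_\omega(g)$ with $f,g \in A$. Since $A$ is abelian, $fg = gf$, so Theorem~\ref{C0 main thm} applies and gives $\alpha \smile \beta = 0$. By bilinearity of the cup product, this vanishing propagates from the spanning set to arbitrary pairs of vectors in $V_A$, so $\bb|_{V_A \times V_A} \equiv 0$; that is, $V_A$ is isotropic.

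Finally, I would invoke the standard fact from symplectic linear algebra that any isotropic subspace of a symplectic vector space of dimension $2l$ has dimension at most $l$. Applied to $V_A \subseteq H^1(S;\RR)$, this immediately yields $\dim_\RR V_A \le l$, which is the desired inequality.

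No genuine obstacle arises once Theorem~\ref{C0 main thm} is in hand; the argument is a purely formal consequence and is essentially identical to the one given for Corollary~\ref{Zn action}, with $\flux_\omega$ replaced by its continuous extension $\theta_\omega$ and $\Symp_0(S,\omega)$ replaced by $\Sympeo_0(S,\omega)$. The only point that requires a brief check is the bilinear extension from generators to all of $V_A$, but this is immediate from the distributivity and scalar-linearity of $\smile$.
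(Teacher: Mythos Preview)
Your proposal is correct and follows exactly the approach indicated in the paper: the paper simply notes that Corollary~\ref{C0 Zn action} follows ``by the same argument of the proof of Corollary~\ref{Zn action},'' which is precisely the isotropy argument you spell out, with $\theta_\omega$ in place of $\flux_\omega$ and Theorem~\ref{C0 main thm} in place of Theorem~\ref{main thm}.
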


In the proof of Theorem \ref{C0 main thm}, we also investigate the extendability and non-extendability of a certain quasimorphism on $\overline{\Ham(S,\omega)}^{C^0}$. However, by the Calabi property, Py's quasimorphism $\mu_P$ is not continuous on $\Ham(S, \omega)$ in the $C^0$-topology; it is unclear whether $\mu_P$ admits an extension to $\overline{\Ham(S,\omega)}^{C^0}$.
To address this problem, we in addition consider a quasimorphism $\mu_B$ on $\Ham(S,\omega)$ constructed by Brandenbursky \cite{Bra}; see Subsections~\ref{subsec=mu_P_const} and \ref{subsec=mu_P_property} for details.

By a result of Entov, Polterovich, and Py \cite{EPP}, we have that $\mu_P - \mu_B$ are continuous in the $C^0$-topology; see Theorem~\ref{mfh property} and Corollary~\ref{cor=C^0-conti} in Subsection~\ref{subsec=conti}.


\subsection*{Organization of the paper}
Section~\ref{section=prelim} is for preliminaries: we review some concepts in symplectic geometry and basic properties of quasimorphisms. In addition, we sketch the constructions and properties of Py's Calabi quasimorphism $\mu_P$ and Brandenbursky's Calabi quasimorphism $\mu_B$. In Section~\ref{section=fluxlift}, we construct certain symplectomorphisms and compute their fluxes.
 Theorem~\ref{main thm} is established in Section~\ref{section=proof_mainthm}: Subsection~\ref{subsection=proof_non_ext}  is devoted to the proof of Theorem~\ref{thm=non_ext}, which is a precise version of the non-extendability result, Theorem~\ref{thm A}, for $\mu_P$. In the proof, the construction discussed in Section~\ref{section=fluxlift} plays a key role. In
 Subsection~\ref{subsection=proof_main}, we deduce Theorem~\ref{main thm} from Theorem~\ref{thm A} and Proposition~\ref{prop=discrete}. In Section~\ref{section=C^0}, we prove Theorem~\ref{C0 main thm}.


Throughout the present paper, for a pair of real numbers $a$ and $b$ and for a non-negative number $D$, we write $a \sim_D b$ to mean $|b - a| \le D$. We use $\NN$ for the set of positive integers.  For a group $G$, $e_G$ denotes the group unit of $G$.  For mutually commuting elements $\gamma_1 ,\dots, \gamma_{m}$ of a group, let $\prod_{j=1}^{m} \gamma_j$ denote the product of $\gamma_1 ,\dots, \gamma_{m}$.


\section{Preliminaries}\label{section=prelim}

\subsection{Symplectic geometry}\label{subsec=symp}
In this subsection, we review some concepts in symplectic geometry which we will need in the subsequent sections. For a more comprehensive introduction to this subject, we refer to \cite{Ban97, MS, P01}.

Let $(M,\omega)$ be a connected symplectic manifold.
Let $\Symp(M,\omega)$ denote the group of  symplectomorphisms of $(M,\omega)$ with compact support and $\Symp_0(M,\omega)$ denote the identity component of $\Symp(M,\omega)$.
In this section, we endow $\Symp(M,\omega)$ with the $C^\infty$-topology.

For a smooth function $H\colon M\to\mathbb{R}$, we define the \textit{Hamiltonian vector field} $X_H$ associated with $H$ by
\[\omega(X_H,V)=-dH(V)\text{ for every }V \in \mathcal{X}(M),\]
where $\mathcal{X}(M)$ is the set of smooth vector fields on $M$.

For a (time-dependent) smooth function $H\colon  [0,1] \times M\to\mathbb{R}$ with compact support and for $t \in  [0,1] $, we define a function $H_t\colon M\to\mathbb{R}$ by $H_t(x)=H(t,x)$.
Let $X_H^t$ denote the Hamiltonian vector field associated with $H_t$ and let $\{\varphi_H^t\}_{t\in\mathbb{R}}$ denote the isotopy generated by $X_H^t$ such that $\varphi^0=\mathrm{id}$.
We set $\varphi_H=\varphi_H^1$ and $\varphi_H$ is called the \emph{Hamiltonian diffeomorphism generated by $H$}.
For a connected symplectic manifold $(M,\omega)$, we define the group of Hamiltonian diffeomorphisms by
\[\Ham(M,\omega)=\{\varphi\in\mathrm{Diff}(M)\;|\;\exists H\in C^\infty(S^1\times M)\text{ such that }\varphi=\varphi_H\}.\]
Then, $\Ham(M,\omega)$ is a normal subgroup of $\Symp_0(M,\omega)$.

Let $\widetilde{\Symp_0}(M,\omega)$ denote the universal covering of $\Symp_0(M,\omega)$.
We define the (symplectic) flux homomorphism $\widetilde{\flux}_\omega\colon\widetilde{\Symp_0}(M,\omega)\to H_c^{1}(M;\RR)$ by
\[\widetilde{\flux}_\omega([\{\psi^t\}_{t\in[0,1]}])=\int_0^1[\iota_{X_t}\omega]dt,\]
where $\{\psi^t\}_{t\in[0,1]}$ is a path in $\Symp_0(M,\omega)$ with $\psi^0=1$ and $[\{\psi^t\}_{t\in[0,1]}]$ is the element of the universal covering $\widetilde{\Symp_0}(M,\omega)$ represented by the path $\{\psi^t\}_{t\in[0,1]}$.
It is known that $\widetilde{\flux}_\omega$ is a well-defined homomorphism.

We also define the (descended) flux homomorphism.
We set
\[\Gamma_\omega=\flux_\omega(\pi_1(\Symp_0(M,\omega))),\]
which is called the \textit{symplectic flux group}.
Then, $\widetilde{\flux}_\omega \colon \widetilde{\Symp_0}(M, \omega) \to H_c^{1}(M ; \RR)$ induces a homomorphism $\Symp_0(M,\omega)\to H_c^{1}(M;\RR)/\Gamma_\omega$, which is denoted by $\flux_\omega$.
\begin{prop}[\cite{Ban,Ban97,MS}]\label{survey on flux}
  Let $(M,\omega)$ be a closed connected symplectic manifold.
  Then, the following hold.
 \begin{enumerate}[$(1)$]
  \item The flux homomorphism $\widetilde{\flux}_\omega \colon \widetilde{\Symp_0}(M,\omega)\to H^1(M;\RR)$ is surjective. In particular, $\flux_\omega\colon\Symp_0(M,\omega)\to H^1(M;\RR)/\Gamma_\omega$ is surjective.
  \item The map $\widetilde\Ham(M,\omega) \to \widetilde{\Symp_0}(M,\omega)$ induced by the inclusion map $\Ham(M,\omega) \to \Symp_0(M,\omega)$ is injective.
  	In particular,  $\widetilde\Ham(M,\omega)$ can be seen as a subgroup of $\widetilde{\Symp_0}(M,\omega)$.
  \item $\mathrm{Ker}(\widetilde\flux_\omega)=\widetilde\Ham(M,\omega)$  and $\mathrm{Ker}(\flux_\omega)=\Ham(M,\omega)$.
\end{enumerate}
\end{prop}
If $S$ is a closed orientable surface whose genus is at least two, then  $\Symp_0( S,\omega)$ is simply  connected, and  in particular, the flux group $\Gamma_\omega$ of $(S, \omega)$  vanishes. Hence in this case, the flux homomorphism $\flux_\omega$ is a homomorphism from $\Symp_0(S, \omega)$ to $H^1(S ; \RR)$.

\subsection{Quasimorphisms}\label{subsec=qm}
First, we briefly recall the definition and basic properties of quasimorphisms. A real-valued function $\mu \colon G \to \RR$ on a group $G$ is called a {\it quasimorphism} if
\[
 D(\mu):= \sup_{x,y \in G}|\mu(xy) - \mu (x) - \mu(y)|
\]
 satisfies that $D(\mu)<\infty$.
The constant $D(\mu)$ is called the {\it defect} of $\mu$.
 By utilizing the symbol $\sim_D$ introduced at the end of Section \ref{intro section}, we have that
\[
\mu(xy)\sim_{D(\mu)} \mu(x)+\mu(y)
\]
for every $x$ and $y$ in $G$.
A quasimorphism $\phi$ is said to be {\it homogeneous} if
$\phi(g^m) = m \cdot \phi(g)$ for every $g \in G$ and for every $m \in \ZZ$.
The following properties are fundamental and well-known properties of homogeneous quasimorphisms. For the reader's convenience, we include the proof.

\begin{lem} \label{lem:qm}
  Let $\phi$ be a homogenous quasimorphism on a group $G$. Then, for every $x,y \in G$, the following hold true:
  \begin{enumerate}[$(1)$]
    \item $\phi(yxy^{-1})=\phi(x)$;
    \item if $xy=yx$, then $\phi(xy)=\phi(x)+\phi(y)$.
  \end{enumerate}
\end{lem}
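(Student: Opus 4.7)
The plan is to use the standard trick for homogeneous quasimorphisms: bounded defects become zero after passing to powers and dividing by $n$. Both statements will follow by exploiting the fact that $\phi(g^n) = n\phi(g)$ absorbs any bounded error in the quasimorphism inequality when $n$ is taken to infinity.

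For part (2), the point is that commutativity $xy = yx$ gives $(xy)^n = x^n y^n$ for every $n \in \NN$. Applying the defect bound once yields
\[
n\phi(xy) = \phi\bigl((xy)^n\bigr) = \phi(x^n y^n) \sim_{D(\phi)} \phi(x^n) + \phi(y^n) = n\phi(x) + n\phi(y).
\]
Dividing by $n$ and letting $n \to \infty$ forces $\phi(xy) = \phi(x) + \phi(y)$.

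For part (1), the same idea applies to the identity $(yxy^{-1})^n = y x^n y^{-1}$. Two applications of the defect bound give
\[
n\phi(yxy^{-1}) = \phi(y x^n y^{-1}) \sim_{2D(\phi)} \phi(y) + \phi(x^n) + \phi(y^{-1}) = n\phi(x),
\]
where the last equality uses homogeneity (in particular $\phi(y^{-1}) = -\phi(y)$, which itself follows from $\phi(y^{-1}) = (-1)\phi(y)$ by homogeneity applied with $m = -1$). Dividing by $n$ and letting $n \to \infty$ yields $\phi(yxy^{-1}) = \phi(x)$.

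There is no real obstacle here; the only thing worth noting is that one should record the elementary fact $\phi(y^{-1}) = -\phi(y)$ (and perhaps $\phi(e_G) = 0$) as an immediate consequence of homogeneity before invoking it in (1). Otherwise the argument is a one-line limit for each item.
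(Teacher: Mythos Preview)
Your proof is correct and essentially identical to the paper's own argument: both parts use the homogeneity identity $(yxy^{-1})^m = yx^my^{-1}$ (resp.\ $(xy)^m = x^my^m$), apply the defect bound, divide by $m$, and let $m\to\infty$. The only cosmetic difference is that the paper proves (1) before (2) and leaves $\phi(y)+\phi(y^{-1})=0$ implicit, whereas you spell it out.
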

\begin{proof}
For every positive integer $m$, we have that
\[  m \cdot \phi(y x y^{-1}) = \phi(y x^{m} y^{-1}) \sim_{2D(\phi)} \phi(y) + \phi(x^{m}) + \phi(y^{-1}) = {m} \cdot \phi(x).\]
This means that
\[ |\phi(yxy^{-1}) - \phi(x)| \le \frac{2D(\phi)}{m}\]
for every $m\in \NN$. By letting $m\to\infty$, we obtain (1).
Now suppose that $xy = yx$.
Then we have that
\[  m \cdot \phi(xy) = \phi(x^m y^m) \sim_{D(\phi)} \phi(x^m) + \phi(y^m) = m \cdot (\phi(x) + \phi(y) ) \]
for every $m\in \NN$.
This means that
\[ |\phi(xy) - \phi(x) - \phi(y) | \le \frac{D(\phi)}{m}\]
for every $m\in \NN$.
Again, by letting $m\to \infty$, we verify (2).
\end{proof}

 The homogeneity condition on quasimorphisms is not restrictive in the following sense.

\begin{lem}[Homogenization of quasimorphisms]\label{lem=homog}
Let $G$  be a group and $\phi$ a quasimorphism on $G$, not necessarily homogeneous. Then there exists a quasimorphism $\phi_{\mathrm{h}}$ on $G$ that satisfies the following  three properties:
\begin{enumerate}[$(1)$]
  \item $\phi_{\mathrm{h}}$ is \emph{homogeneous};
  \item $\phi_{\mathrm{h}}(x)\sim_{D(\phi)}\phi(x)$ for every $x\in G$;
  \item $D(\phi_{\mathrm{h}})\leq 2D(\phi)$.
\end{enumerate}
\end{lem}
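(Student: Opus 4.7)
The plan is to define the candidate homogenization by
\[
  \phi_{\mathrm{h}}(x) := \lim_{n \to \infty} \frac{\phi(x^n)}{n}
\]
and then to verify that it satisfies (1)--(3) in turn. First I would establish existence of the limit: fixing $x \in G$ and writing $a_n := \phi(x^n)$, iterating the single-step defect inequality $|a_{m+n} - a_m - a_n| \le D(\phi)$ produces $|a_{kn} - k\, a_n| \le (k-1)D(\phi)$ for all $k \ge 1$. Dividing by $kn$ shows that $(a_n/n)_{n\ge 1}$ is Cauchy, so the limit exists. Letting $k \to \infty$ in the same bound then yields the uniform approximation
\[
  |\phi_{\mathrm{h}}(x) - \phi(x^n)/n| \;\le\; D(\phi)/n \quad (n \ge 1),
\]
whose $n=1$ case is precisely property (2).

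Next I would verify homogeneity, property (1). For positive $m$ a reindexing gives $\phi_{\mathrm{h}}(x^m) = \lim_n \phi(x^{mn})/n = m\,\phi_{\mathrm{h}}(x)$ immediately. For $m = 0$, applying the defect to $e = e \cdot e$ forces $|\phi(e)| \le D(\phi)$, so $\phi_{\mathrm{h}}(e) = \lim_n \phi(e)/n = 0$. For $m < 0$, applying the defect to $e = x^n \cdot x^{-n}$, dividing by $n$, and passing to the limit yields $\phi_{\mathrm{h}}(x^{-1}) = -\phi_{\mathrm{h}}(x)$; combined with the positive case this handles all $m \in \ZZ$.

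Finally I would address the defect bound (3). The key move is to use the homogeneity just established to rewrite the uniform approximation above as $|n\,\phi_{\mathrm{h}}(g) - \phi(g^n)| \le D(\phi)$ for every $g$ and every $n$. Applying this at $g = xy,\, x,\, y$ and using the triangle inequality yields, for each $n \ge 1$,
\[
  n\,|\phi_{\mathrm{h}}(xy) - \phi_{\mathrm{h}}(x) - \phi_{\mathrm{h}}(y)| \;\le\; |\phi((xy)^n) - \phi(x^n) - \phi(y^n)| + 3\,D(\phi),
\]
and the right-hand side would then be estimated by iterating the single-step defect of $\phi$: one compares each of $\phi((xy)^n),\, \phi(x^n),\, \phi(y^n)$ with its linearization at cost $(n-1)D(\phi)$, and invokes $|\phi(xy) - \phi(x) - \phi(y)| \le D(\phi)$ in between. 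The main obstacle is organizing this bookkeeping so that, after dividing by $n$ and letting $n \to \infty$, the surviving constant matches the sharp $2D(\phi)$ claimed in (3) rather than the looser bound produced by a naive triangle inequality; this requires carefully pairing the iterated-defect contributions against the homogeneity correction so that only two of the four error terms survive the passage to the limit.
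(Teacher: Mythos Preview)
Your arguments for the existence of the limit and for properties (1) and (2) are correct and match the paper's approach: both define $\phi_{\mathrm h}(x)=\lim_{n}\phi(x^n)/n$, establish the limit (you by a direct Cauchy estimate, the paper by invoking Fekete's lemma on the subadditive sequence $(\phi(x^m)+D(\phi))_m$), and both rest on the same iterated estimate $|\phi(x^m)-m\phi(x)|\le (m-1)D(\phi)$.

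For (3) there is a genuine gap. The scheme you outline --- linearize each of $\phi((xy)^n)$, $\phi(x^n)$, $\phi(y^n)$ at cost $(n-1)D(\phi)$ and insert $|\phi(xy)-\phi(x)-\phi(y)|\le D(\phi)$ once --- yields
\[
|\phi((xy)^n)-\phi(x^n)-\phi(y^n)|\le (n-1)D(\phi)+nD(\phi)+2(n-1)D(\phi)=(4n-3)D(\phi),
\]
and together with your $3D(\phi)$ correction this gives exactly $D(\phi_{\mathrm h})\le 4D(\phi)$ after dividing by $n$. Your closing assertion that ``careful pairing'' leaves only two of the four linear-in-$n$ contributions cannot be made to work: those four terms are one-sided absolute-value bounds and therefore add; there is no cancellation among them, nor can the bounded correction $3D(\phi)$ absorb any of them. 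The paper does not attempt to sharpen this step either --- it notes that the straightforward argument gives $D(\phi_{\mathrm h})\le 4D(\phi)$ and then refers to \cite[Lemma~2.58]{Ca} for the sharp bound $2D(\phi)$, whose proof is organized differently from the triangle-inequality chain you describe.
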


For each $\phi$,  we can show  that a quasimorphism $\phi_{\mathrm{h}}$ satisfying (1) and (2) is uniquely determined; the quasimorphism $\phi_{\mathrm{h}}$ is called the \emph{homogenization} of $\phi$.

\begin{proof}
First, we explain the construction of the homogenization $\phi_{\mathrm{h}}$ of $\phi$. Fix $x\in G$. Then  the sequence $(\phi(x^m)+D(\phi))_{m\in \NN}$ is subadditive.
By applying  Fekete's lemma,
we can define a map $\phi_{\mathrm{h}}\colon G\to \RR$ by
\[
\phi_{\mathrm{h}}\colon G\to \RR;\quad x\mapsto \lim_{m\to \infty}\frac{\phi(x^m)}{m}.
\]
 Note that $\phi(x^m) \sim_{(m-1)D(\phi)} m \phi(x)$ for every $x \in G$ and every $m \in \NN$.
Then, it is straightforward to verify (1), (2) and $D(\phi_{\mathrm{h}})\leq 4D(\phi)$. To obtain (3), see \cite[Lemma~2.58]{Ca}.
\end{proof}

Secondly, we recall the definition of quasi-invariance and invariance of  quasimorphisms for the pair $(\hG,\bG)$ of a group and its normal subgroup.

\begin{definition}[Invariant quasimorphism]\label{defn=inv_qm}
Let $\hG$ be  a group and $\bG$ a normal subgroup of $\hG$.
\begin{enumerate}[$(1)$]
 \item A quasimorphism $\phi \colon \bG \to \RR$ on $\bG$ is said to be  \emph{$\hG$-quasi-invariant} if there exists a non-negative real number $D'$ such that
\[
\phi(\hg x\hg^{-1}) \sim_{D'} \phi(x)
\]
for every $\hg \in \hG$ and for every $x \in G$. Let $D'(\phi)$ denote the smallest number $D'$ which satisfies the above inequality.
 \item A quasimorphism $\phi \colon \bG \to \RR$ on $\bG$ is said to be \emph{$\hG$-invariant} if $\phi$ is $\hG$-quasi-invariant with $D'(\phi)=0$. In other words,
\[
\phi(\hg x\hg^{-1}) = \phi(x)
\]
holds for every $\hg \in \hG$ and for every $x \in G$.
\end{enumerate}
\end{definition}

\begin{lem} \label{lem=inv}
Let $\hG$ be a group and $\bG$ a normal subgroup of $\hG$.
Assume that $\psi\colon \hG\to \RR$ is a homogeneous quasimorphism on $\hG$. Then the restriction $\psi |_{\bG}\colon \bG\to \RR$ of $\psi$ on $\bG$ is a $\hG$-invariant homogeneous quasimorphism on $\bG$.
\end{lem}

\begin{proof}
 This lemma immediately follows from Lemma~\ref{lem:qm}~(1).
\end{proof}

\begin{remark}\label{remark=homog}
Let $\hG$ be  a group and $\bG$ a normal subgroup of $\hG$. Let $\phi\colon \bG\to \RR$ be a $\hG$-invariant homogeneous quasimorphism on $\bG$. Suppose that $\phi$ is \emph{extendable}, meaning that there exists a quasimorphism $\hat{\phi}$ on $\hG$ such that $\hat{\phi}|_{\bG}=\phi$. Then, we remark that there exists a \emph{homogeneous} extension of $\phi$. To see this, take the homogenization $(\hat{\phi})_{\mathrm{h}}$ of $\hat{\phi}$. Since $\phi\colon \bG\to \RR$ is homogeneous by assumption, the restriction of $(\hat{\phi})_{\mathrm{h}}$ on $\bG$ coincides with that of $\hat{\phi}$. Hence, $(\hat{\phi})_{\mathrm{h}}|_{\bG}=\phi$. Furthermore, by Lemma~\ref{lem=homog}~(3), we have that $D((\hat{\phi})_{\mathrm{h}})\leq 2D(\hat{\phi})$.
\end{remark}

The following lemma was employed in Remark~\ref{rem=fullgenRousseau}.
\begin{lem}[{\cite[Lemma~3.6]{Bavard} (see also \cite[Lemma~2.24]{Ca})}]\label{lem=Bavard}
Let $G$ be a group and $\phi$ a homogeneous quasimorphism on $G$. Then,
\[
D(\phi)= \sup\{|\phi([x,y])|\,|\, x,y\in G\}
\]
holds.
\end{lem}

\subsection{Calabi quasimorphisms}\label{subsec=Calabi}
In this section, we recall the definition of the Calabi property with respect to  quasimorphisms and that of Calabi quasimorphisms in symplectic geometry.
A subset $X$ of a connected symplectic manifold $(M,\omega)$ is said to be \textit{displaceable} if there exists $\varphi \in \Ham(M,\omega)$ satisfying $\varphi(X) \cap \overline{X} = \emptyset$. Here, $\overline{X}$ is the topological closure of $X$  in $M$.

Let $(M,\omega)$ be a $2n$-dimensional  \emph{exact} symplectic manifold, meaning that the symplectic form $\omega$ is exact. For such $(M,\omega)$, we recall that the \textit{Calabi homomorphism}
is a function $\mathrm{Cal}_{M} \colon \Ham(M,\omega)\to\mathbb{R}$ defined by
\[
	\mathrm{Cal}_{M}(\varphi_H)=\int_0^1\int_M H_t\omega^n\,dt,
\]
 where $H \colon [0,1] \times M \to \RR$ is a smooth function. It is known that the Calabi homomorphism is a well-defined group homomorphism (see \cite{Cala,Ban,Ban97,MS,Hum}). The following \emph{Calabi property} plays a key role in this paper.

\begin{definition}[Calabi property with respect to quasimorphisms]\label{definition of Calabi property}
Let $(M,\omega)$ be a $2n$-dimensional  symplectic manifold.
Let $\mu\colon\Ham(M,\omega)\to\mathbb{R}$ be a homogeneous quasimorphism. A non-empty open subset $U$ of $M$ is said to have the \textit{Calabi property} with respect to $\mu$ if $\omega$ is exact on $U$ and if the restriction of $\mu$ to $\Ham(U,\omega)$ coincides with the Calabi homomorphism $\mathrm{Cal}_U$.
\end{definition}
In terms of subadditive invariants, the Calabi property corresponds to the asymptotically vanishing spectrum condition in \cite[Definition 3.5]{KO19}.

\begin{definition}[\cite{EP03,PR}]\label{definition of Calabi qm}
Let $(M,\omega)$ be a $2n$-dimensional  symplectic manifold.
A \textit{Calabi quasimorphism} is defined as a homogeneous quasimorphism $\mu \colon \Ham(M,\omega) \to \RR$ such that every non-empty displaceable open exact subset of $M$ has the Calabi property with respect to $\mu$.
\end{definition}

The first example of Calabi quasimorphism was given by Entov and Polterovich \cite{EP03} by using the Hamiltonian Floer theory.
After their work, Py \cite{Py06} and Brandenbursky \cite{Bra} constructed Calabi quasimorphisms on closed orientable surfaces with higher genus by non-Floer theoretic methods.
We explain the constructions and properties of these Calabi quasimorphisms in Subsections \ref{subsec=mu_P_const} and \ref{subsec=mu_P_property}, respectively.
For other examples of Calabi quasimorphisms; see \cite{Py06t,McD,FOOO,C,LZ,BKS}.

\subsection{Outlined constructions of $\mu_P$ and $\mu_B$}\label{subsec=mu_P_const}

In the present paper, we employ two Calabi quasimorphisms $\mu_P$ and $\mu_B$: the former is due to Py \cite{Py06} and the latter is due to Brandenbursky \cite{Bra}. In this subsection, we outline  their constructions; in the next two subsections, we list the properties of $\mu_P$ and $\mu_B$ needed in the proofs of Theorems~\ref{main thm} and \ref{C0 main thm}.
The proofs only employ the listed properties, and the precise definitions of $\mu_P$ and $\mu_B$ will not appear in the rest of the present paper. Hence, the reader who is mainly interested in the proofs of Theorems~\ref{main thm} and \ref{C0 main thm} can skip this subsection.

We first give an outline of the construction of Py's Calabi quasimorphism \cite{Py06} (see also \cite{R})
when the symplectic form $\omega$ on the surface $S$ is normalized to have the volume $2\nn-2$.

Let $S$ be a closed connected surface whose genus $\nn$ is at least $2$, and $\omega$ a volume form of $S$ such that the volume is $2\nn - 2$.
Choose a metric with constant negative curvature whose area form is $\omega$. Let $\pi \colon P \to S$ be the unit tangent bundle of $S$, $\mathbb{D}$ the Poincar\'e disc, and $S^1 \mathbb{D}$ the unit tangent bundle of $\mathbb{D}$. Set $S^1_\infty = \partial \mathbb{D}$. Define $p_\infty \colon S^1 \mathbb{D} \to S^1_\infty$ to be the map sending $v \in S^1 \mathbb{D}$ to the infinity of the geodesic starting at $v$.
Let $X$ be the vector field generated by the $S^1$-action on $P$. There exists a contact form on $P$ such that $\pi^* \omega = d \alpha$.
Then $X$ is the Reeb vector field of $\alpha$.
Let ${\rm Diff}(P, \alpha)$ denote the group of diffeomorphisms of $P$ preserving $\alpha$, and ${\rm Diff}(P, \alpha)_0$ its identity component.
Now we construct a homomorphism $\Theta \colon \Ham (S, \omega) \to {\rm Diff}_0(P, \alpha)$ as follows. Let $\phi \in \Ham(S,\omega)$ and let $H\colon S\times [0,1] \to\RR$ be a time-dependent Hamiltonian generating $\phi$ and satisfying $\int_S H_t \omega = 0$ for every $t \in [0,1]$. Then set
\[ V_t = \widehat{X}_{H_t} + (H_t \circ \pi)X.\]
Here $\widehat{X}_{H_t}$ denotes the horizontal lift of  the Hamiltonian vector field $X_{H_t}$, {\it i.e.}, $\alpha(\widehat{X}_{H_t}) = 0$ and $\pi_* (\widehat{X}_{H_t}) = X_{H_t}$.
Let  $\Theta(H_t)$ be the isotopy generated by $\{V_t\}_{t\in [0,1]}$, and set $\Theta(\phi) = \Theta(H_1)$.
This is  known to be  well-defined.

Let $\widehat{\Theta(H_t)} \colon S^1 \mathbb{D} \to S^1 \mathbb{D}$ be the lift of $\Theta(H_t)$.
Define $\gamma^{(H,v)} \colon [0,1] \to S^1_\infty$ by $\gamma^{(H,v)}(t) = p_\infty (\widehat{\Theta ( H_t)} (v))$.
Let $\widetilde{\gamma^{(H,v)}}$ be a lift to $\RR$ of $\gamma^{(H,v)}$ and set $\Rot(H,v) = \widetilde{\gamma^{(H,v)}}(1) - \widetilde{\gamma^{(H,v)}}(0)$. Let $\widetilde{\pi}$ denote the projection $S^1 \mathbb{D} \to \mathbb{D}$. For $\widetilde{x} \in \mathbb{D}$, set
\[ \widetilde{\rm angle} (H, \widetilde{x}) = - \inf_{\widetilde{\pi}(v) = \widetilde{x}} \lfloor \Rot(H,v) \rfloor.\]
Since $\widetilde{\rm angle}(H, -)$ is invariant by the action of $\pi_1(S)$, this induces a measurable bounded function ${\rm angle}(H, -)$ on $S$.
Define
\[ \mu_1(\varphi_H) = \int_S {\rm angle}(H, -) \omega.\]
Then $\mu_1$ is a quasimorphism and its homogenization is Py's Calabi quasimorphism $\mu_P$.

Next we review the construction of Brandenbursky's Calabi quasimorphisms.
Let $S$ be a 2-disc $D$ or a closed orientable surface with genus at least two.
Let $B_n(S)$ denote the surface braid group on $S$ on $n$ strands and $P_n(S)$ denote the pure braid subgroup of $B_n(S)$.
Note that $B_n(D)$ is the Artin braid group $B_n$ and $P_n(D)$ is the Artin pure braid group $P_n$.
For a homogeneous quasimorphism $\phi$ on $B_n(S)$, as a generalization of  the Gambaudo--Ghys construction  \cite{GG},
 Brandenbursky  constructed a homogeneous quasimorphism $\mathcal{G}_n(\phi)$ on $\Symp_0(S)$ as follows.

For every pair $x,y \in S$, we choose a geodesic path $s_{xy}$ from $x$ to $y$.
Let $f_t \in \Symp_0(S) $ be an isotopy from $\mathrm{id}$ to $f \in \Symp_0(S)$ and let $z \in S$ be a base point.
For $y \in S$, we define a based loop $\gamma_y$ in $S$ as the concatenation of paths $\gamma_{zy}$, $\{ f_t (y)\}_{0\leq t \leq 1}$, and $\gamma_{f(y)z}$.
Let $X_n(S)$ be the configuration space of ordered $n$-tuples of pairwise distinct points in $S$.
Let $z=(z_1,\dots,z_n) \in X_n(S)$ be a base point.
The fundamental group $\pi_1(X_n(S),z)$ is identified with $P_n(S)$.
For almost  every point $x=(x_1,\dots,x_n) \in X_n(S)$, the $n$-tuple loop $(\gamma_{x_1},\dots,\gamma_{x_n})$ is a based loop in $X_n(S)$.
Since $\Symp_0(S)$ is simply connected, the based homotopy class of this loop does not depend on the choice of the isotopy $\{f_t\}_{0\leq t \leq 1}$.
Let $\gamma(f,x) \in P_n(S)$ be an element represented by this loop.
We define $\Phi \colon \Symp_0(S) \to \RR$ by
\[ \Phi (f) = \int_{x \in X_n(S)} \phi(\gamma(f,x)) dx  \]
and define $\mathcal{G}_n(\phi)$ by the homogenization $\Phi_{\rm h}$ of $\Phi$.

 Let $D$ be the 2-disc with standard symplectic form $\omega_0$.
Let $\phi \colon B_2 \cong \ZZ \to \RR$ be a homomorphism such that $\phi(\sigma_1)=1$, where $\sigma_1 \in B_2$ is the Artin generator.
It is known that the Calabi homomorphism $\mathrm{Cal}_D$ coincides with the map $\mathcal{G}_2(\phi) \colon \Symp_0(D,\omega_0)=\Ham(D,\omega_0) \to \RR$ (see \cite{GG97} for example).

Let $S$ be a closed orientable surface whose genus is at least two, $\omega$ a symplectic form on $S$, and $D \subset S$ an embedded 2-disc. We regard $B_2 = B_2(D)$ as a subgroup of $B_2(S)$.
 Now fix   a homogeneous quasimorphism $\phi$ on $B_2(S)$ such that $\phi(\sigma_1)=1$ for $\sigma_1 \in B_2 \subset B_2(S)$. Here, such a $\phi$ \emph{does} exist; see \cite[Subsection 2.7]{Bra}.
Then $\mathcal{G}_2(\phi) \colon \Ham(S,\omega) \to \RR$ is a Calabi quasimorphism,  and we set $\mu_B = \mathcal{G}_2(\phi)$.

\subsection{Properties of $\mu_P$ and $\mu_B$}\label{subsec=mu_P_property}

In this subsection, we exhibit properties of $\mu_P$ and $\mu_B$ which are used in the proof of Theorems~\ref{main thm} and \ref{C0 main thm}. On continuity  of quasimorphisms, see the next subsection.

\begin{thm}[Known properties of $\mu_P$ and $\mu_B$]\label{thm=mu_P}
Let $S$ be a closed connected orientable surface whose genus $\nn$ is at least two, and $\omega$ a symplectic form on $S$.
Then Py's Calabi quasimorphism $\mu_P\colon \Ham(S,\omega)\to \RR$ and Brandenbursky's Calabi quasimorphism $\mu_B\colon \Ham(S,\omega)\to \RR$ satisfy the following:

\begin{enumerate}[$(1)$]
  \item $\mu_P$ is a $\Symp_0(S,\omega)$-invariant homogeneous quasimorphism on $\Ham(S,\omega)$;
  \item there exists a homogeneous quasimorphism $\hat{\mu}_B$ on $\Symp_0(S,\omega)$ such that $\hat{\mu}_B|_{\Ham(S,\omega)}=\mu_B$. In particular, $\mu_B$ is a $\Symp_0(S,\omega)$-invariant homogeneous quasimorphism on $\Ham(S,\omega)$;
  \item $\mu_P$ and $\mu_B$ are both Calabi quasimorphisms;
  \item for an open subset $U$ of $S$ homeomorphic to the annulus, $U$ has the Calabi property with respect to $\mu_P$.
\end{enumerate}
\end{thm}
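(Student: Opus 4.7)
The four statements collect known results from Py~\cite{Py06} and Brandenbursky~\cite{Bra}, together with one additional direct observation for (2). My plan is to appeal to these references for the technically heavy parts and to supply short justifications for the parts that require minor re-packaging.

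For (1), homogeneity is built into Py's homogenization step. For the $\Symp_0(S,\omega)$-invariance, the key calculation is that if $\psi\in \Symp_0(S,\omega)$ and $\varphi_H\in \Ham(S,\omega)$, then $\psi\varphi_H\psi^{-1}=\varphi_{H\circ \psi^{-1}}$; moreover, any symplectic isotopy $\{\psi_s\}$ from the identity to $\psi$ lifts to a path of contactomorphisms of $(P,\alpha)$ that conjugates $\Theta(\varphi_H)$ to $\Theta(\psi\varphi_H\psi^{-1})$. Since the rotation number $\Rot(\cdot,v)$ on $S^1_\infty$ is invariant under such conjugation in $S^1\mathbb{D}$, the angle function transforms by the corresponding change of variables, and its integral $\mu_1$ is therefore unchanged. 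The invariance then passes through the homogenization; this calculation is carried out in detail in \cite{Py06}.

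For (2), I would simply observe that the construction $\mathcal{G}_2(\phi)$ is already meaningful on all of $\Symp_0(S,\omega)$---not merely on $\Ham(S,\omega)$---since defining the pure braid $\gamma(f,x)\in P_2(S)$ only uses a symplectic isotopy from the identity to $f$, which exists for every $f\in \Symp_0(S,\omega)$ by definition. Setting $\hat{\mu}_B:=\mathcal{G}_2(\phi)$ on $\Symp_0(S,\omega)$ therefore yields a homogeneous quasimorphism whose restriction to $\Ham(S,\omega)$ coincides with $\mu_B$ by construction; the $\Symp_0(S,\omega)$-invariance of $\mu_B$ is then the automatic consequence recorded in Lemma~\ref{lem=inv}~(2). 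For (3), the Calabi property of $\mu_P$ on every displaceable exact open subset is the main theorem of \cite{Py06}, and the Calabi property of $\mu_B$ for the chosen quasimorphism $\phi$ with $\phi(\sigma_1)=1$ is proved by Brandenbursky in \cite{Bra}. For (4), Py establishes a strengthening: every annular open subset of $S$---which need not be displaceable when $S$ has positive genus---nonetheless carries the Calabi property for $\mu_P$; this is a distinctive feature of the rotation-number description of $\mu_1$ and is proved in \cite{Py06}.

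The main obstacle is the conjugation-equivariance calculation behind (1); items (2), (3), and (4) are either immediate from the constructions recalled in Subsection~\ref{subsec=mu_P_const} or direct quotations from the cited literature.
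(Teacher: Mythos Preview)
Your proposal is correct and matches the paper's approach: the paper's proof is nothing more than a list of references (Th\'eor\`eme~1 and Th\'eor\`eme~2 of \cite{Py06} for (1), (3), (4), and \cite[Theorem~5]{Bra} for the $\mu_B$ part of (3)), together with the same observation you make for (2)---that $\mathcal{G}_2(\phi)$ is already defined on all of $\Symp_0(S,\omega)$ so that $\mu_B$ is a restriction by construction. Your additional expository sketch for (1) goes beyond what the paper supplies, but since you ultimately defer to \cite{Py06} for the details, there is no substantive difference.
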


For the proof of Theorem~\ref{main thm}, we only employ $\mu_P$; the properties for $\mu_B$ will be utilized in the proof of Theorem~\ref{C0 main thm}.

\begin{proof}
As we explained in Subsection \ref{subsec=mu_P_const}, $\mu_B$ is constructed as the restriction of a homogeneous quasimorphism on  $\Symp_0(S,\omega)$ to $\Ham(S,\omega)$ and thus Item (2) holds.
Here we list the references.
Item (1) is in \cite[Th\'eor\`eme 1]{Py06}.
Item (3) is in \cite[Th\'eor\`eme 1]{Py06} and \cite[Theorem 5]{Bra}.
Item (4) is in \cite[Th\'eor\`eme 2]{Py06}.
\end{proof}

\begin{remark}\label{remark=annulus}
Theorem~\ref{thm=mu_P}~(4) does \emph{not} directly follow from (3). This is because  some annuli are \emph{not} displaceable in $S$. In fact, the proof of Theorem~\ref{thm A} in Section~\ref{section=proof_mainthm}, together with Theorem~\ref{thm=mu_P}~(2), implies that $U$ as in (4) \emph{fails} to have the Calabi property with respect to $\mu_B$.
\end{remark}

\subsection{Continuity of quasimorphisms}\label{subsec=conti}

In this subsection, we discuss the $C^0$-continuity and the $C^0$-discontinuity of quasimorphisms related to $\mu_P$ and $\mu_B$.
The reader who is interested only in the proof of Theorem~\ref{main thm} can skip this subsection.
In this paper, topological groups are not a priori assumed to be Hausdorff.

We employ the following proposition by Entov, Polterovich and Py.

\begin{prop}[{\cite[Proposition~1.4]{EPP}}]\label{prop=conti}
Let $G$ be a topological group and $H$ a dense subgroup of $G$. Let $\mu \colon H \to \RR$ be a homogeneous quasimorphism  on $H$ which is continuous in the relative topology on $H$ from $G$. Then there exists a continuous quasimorphism $\overline{\mu} \colon G \to \RR$ on $G$ such that $\overline{\mu}|_{H} = \mu$.
\end{prop}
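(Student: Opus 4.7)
The plan is to define $\overline{\mu}$ on $G$ as a net-limit of $\mu$ along $H$, and then to verify in turn: existence and well-definedness of the limit, the identity $\overline{\mu}|_H=\mu$, the quasimorphism inequality for $\overline{\mu}$, and continuity.

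The core estimate is the following. Since $\mu$ is continuous on $H$ and $\mu(e_H)=0$ by homogeneity, for every $\epsilon>0$ there is an open neighborhood $U$ of $e_G$ in $G$ with $|\mu(h)|<\epsilon$ for all $h\in U\cap H$. Fix $g\in G$ and two nets $(h_\alpha),(h'_\alpha)$ in $H$ both converging to $g$. By continuity of the group operations, for each fixed $n\in\NN$ the net $h_\alpha^{n}(h'_\alpha)^{-n}$ converges to $g^n g^{-n}=e_G$, so it eventually lies in $U$. Combining this with homogeneity of $\mu$ and the defect inequality,
\[
|n\mu(h_\alpha)-n\mu(h'_\alpha)|
=\bigl|\mu(h_\alpha^{n})+\mu((h'_\alpha)^{-n})\bigr|
\le \bigl|\mu\bigl(h_\alpha^{n}(h'_\alpha)^{-n}\bigr)\bigr|+D(\mu)
\le \epsilon+D(\mu),
\]
so $|\mu(h_\alpha)-\mu(h'_\alpha)|\le (D(\mu)+\epsilon)/n$ for all sufficiently large $\alpha$ (depending on $n,\epsilon$). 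Applying this bound to two indices $\alpha,\beta$ of a single net $h_\alpha\to g$ shows that $(\mu(h_\alpha))$ is Cauchy in $\RR$ and hence converges, and applying it to two different nets shows independence of the limit. I would therefore set $\overline{\mu}(g):=\lim_\alpha \mu(h_\alpha)$.

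Once $\overline{\mu}$ is defined, I would pass to the limit in $|\mu(h_\alpha h'_\alpha)-\mu(h_\alpha)-\mu(h'_\alpha)|\le D(\mu)$ with $h_\alpha\to g$ and $h'_\alpha\to g'$ to conclude that $\overline{\mu}$ is a quasimorphism with $D(\overline{\mu})\le D(\mu)$; homogeneity of $\overline{\mu}$ is inherited by the same limiting procedure, and $\overline{\mu}|_H=\mu$ is immediate by taking constant nets. Continuity of $\overline{\mu}$ at $e_G$ then follows directly from the defining neighborhood $U$: for any net $g_\alpha\to e_G$, approximating each $g_\alpha$ by an $H$-element inside $U$ gives $|\overline{\mu}(g_\alpha)|\le\epsilon+o(1)$. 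To upgrade to continuity at an arbitrary $g\in G$, I would invoke the same homogeneity-plus-powers trick: for $g_\alpha\to g$ one has $g_\alpha^{n}g^{-n}\to e_G$ for each fixed $n$, whence $|n\overline{\mu}(g_\alpha)-n\overline{\mu}(g)|\le D(\overline{\mu})+o(1)$, and dividing by $n$ and letting $n\to\infty$ yields $\overline{\mu}(g_\alpha)\to\overline{\mu}(g)$.

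The main obstacle is the well-definedness step: a priori, the quasimorphism inequality applied to $h_\alpha h_\beta^{-1}$ only yields bounded oscillation of order $D(\mu)$, so continuity of $\mu$ at $e_H$ alone is not enough to force convergence of $(\mu(h_\alpha))$. The homogeneity-plus-powers device above is what rescues the argument: passing to $n$-th powers amplifies the tiny error $|\mu(h_\alpha^{n}(h'_\alpha)^{-n})|$ without paying more than a single factor of $D(\mu)$ from the defect, and the $1/n$ factor after dividing then annihilates both terms. Exactly the same mechanism is what promotes continuity of $\overline{\mu}$ from $e_G$ to all of $G$.
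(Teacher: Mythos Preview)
The paper does not supply its own proof of this proposition; it is quoted from \cite[Proposition~1.4]{EPP} and used as a black box. Your argument is correct and is essentially the standard one. The key mechanism you isolate---amplifying via $n$-th powers so that the single defect constant $D(\mu)$ gets divided by $n$---is precisely the content of Shtern's criterion (stated later in the paper as Proposition~\ref{prop=conti_homog}): a homogeneous quasimorphism on a topological group is continuous if and only if it is bounded on some neighborhood of the identity. Your proof effectively reproves that criterion and the density-extension simultaneously. One minor remark: the statement only asks for a continuous quasimorphism extension, not a homogeneous one, so the homogeneity of $\overline{\mu}$ that you establish along the way is a bonus (though it is needed in your continuity argument at general $g$, so it is not wasted effort).
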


We recall a celebrated result by Entov--Polterovich--Py \cite{EPP}, which characterizes the $C^0$-continuity of homogeneous quasimorphisms on $\Ham(S,\omega)$ for a surface $S$.

\begin{thm}[{\cite[Theorem~1.7]{EPP}}]\label{mfh property}
Let $S$ be a closed connected orientable surface and $\omega$ a symplectic form on $S$.
Let $\mu$ be a homogeneous quasimorphism on $\Ham(S,\omega)$.
Then, $\mu$ is continuous in the $C^0$-topology if and only if the following condition is satisfied: there exists a positive number $A$ such that for every disc $D\subset S$ of area less than $A$, the restriction of $\mu$ to $\Ham(D,\omega|_D)$ vanishes.

\end{thm}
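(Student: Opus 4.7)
The plan is to prove the two implications separately; the serious analytic content is confined to a single $C^0$-fragmentation lemma, while the two directions of the theorem are then bookkeeping with homogeneity, conjugation invariance, and the defect inequality.

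\emph{Necessity.} Assume $\mu$ is $C^0$-continuous, so for every $\epsilon > 0$ there exists $\delta > 0$ with $|\mu(\varphi)| < \epsilon$ whenever $d_{C^0}(\varphi,\mathrm{id}) < \delta$, where $d_{C^0}$ is measured with respect to a fixed Riemannian metric on $S$. I would choose $A > 0$ small enough that, by Moser's theorem, every disc $D \subset S$ of area less than $A$ is symplectically isotopic to a round disc $D'$ of diameter less than $\delta$. Given $\varphi \in \Ham(D,\omega|_D)$, pick $g \in \Symp_0(S,\omega)$ with $g(D) = D'$ and set $\psi = g \varphi g^{-1} \in \Ham(D',\omega|_{D'})$. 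By Lemma~\ref{lem:qm}(1), $\mu(\varphi) = \mu(\psi)$. Since every power $\psi^n$ is supported in $D'$, it satisfies $d_{C^0}(\psi^n,\mathrm{id}) \le \mathrm{diam}(D') < \delta$, forcing $|\mu(\psi^n)| < \epsilon$. Homogeneity then yields $|\mu(\varphi)| = |\mu(\psi^n)|/n < \epsilon/n$ for every $n \in \NN$, hence $\mu(\varphi) = 0$.

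\emph{Sufficiency.} Assume the vanishing condition for some $A > 0$, and fix a finite cover $U_1,\ldots,U_N$ of $S$ by open discs each of area less than $A$. The heart of the argument is the following $C^0$-fragmentation lemma: there exists $\delta_0 > 0$ such that every $\varphi \in \Ham(S,\omega)$ with $d_{C^0}(\varphi,\mathrm{id}) < \delta_0$ decomposes as $\varphi = \psi_1 \cdots \psi_N$ with $\psi_i \in \Ham(U_i,\omega|_{U_i})$. Granting this, iterating the quasimorphism inequality and using $\mu(\psi_i) = 0$ yields
\[|\mu(\varphi)| \le (N-1) D(\mu) \quad \text{whenever } d_{C^0}(\varphi,\mathrm{id}) < \delta_0.\]
To upgrade this fixed bound into continuity at the identity, fix $\epsilon > 0$, choose $n \in \NN$ with $(N-1)D(\mu)/n < \epsilon$, and note that if $d_{C^0}(\varphi,\mathrm{id}) < \delta_0/n$, then $d_{C^0}(\varphi^n,\mathrm{id}) \le n \cdot d_{C^0}(\varphi,\mathrm{id}) < \delta_0$, so the bound applies to $\varphi^n$ and homogeneity gives $|\mu(\varphi)| = |\mu(\varphi^n)|/n < \epsilon$. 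To pass from continuity at the identity to continuity at an arbitrary $g \in \Ham(S,\omega)$, I would use the standard homogeneous-quasimorphism trick: for $g_k \to g$ and any fixed $n$, one has $g_k^n g^{-n} \to \mathrm{id}$, so
\[|\mu(g_k) - \mu(g)| = \frac{|\mu(g_k^n) - \mu(g^n)|}{n} \le \frac{D(\mu) + |\mu(g_k^n g^{-n})|}{n},\]
which is made less than any prescribed $\epsilon$ by first choosing $n$ large and then $k$ large.

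\emph{Main obstacle.} The genuine difficulty is the $C^0$-fragmentation lemma with the number $N$ of factors depending only on the cover, not on $\varphi$; the classical Banyaga fragmentation gives a bound that deteriorates as $\varphi$ becomes complicated. I would prove the uniform version by induction on the number of cover elements, peeling off at each step a factor $\psi_i \in \Ham(U_i,\omega|_{U_i})$ so that the remaining factor is supported in $U_1 \cup \cdots \cup U_{N-1}$. The peel-off is carried out by interpolating between $\varphi$ and an auxiliary symplectic diffeomorphism that agrees with $\varphi$ outside a neighborhood of $\overline{U_N}$, a construction of Moser/Alexander type whose well-posedness requires $\varphi$ to be $C^0$-close to the identity; this is precisely where the parameter $\delta_0$ is determined.
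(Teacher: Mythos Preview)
The paper does not supply a proof of this statement; Theorem~\ref{mfh property} is quoted from \cite[Theorem~1.7]{EPP} and used as a black box. So there is no in-paper argument to compare against.

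Your sketch is essentially the strategy of the original Entov--Polterovich--Py proof: necessity via conjugation into a small disc plus homogeneity, and sufficiency via a uniform $C^0$-fragmentation lemma over a fixed cover by small discs, followed by the bounded-near-identity criterion for continuity of homogeneous quasimorphisms (Proposition~\ref{prop=conti_homog} in this paper). The outline is sound and you have correctly isolated the only substantial analytic input, namely the quantitative fragmentation with a bound on the number of factors depending only on the cover.

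Two minor points. In the necessity direction you invoke conjugation by $g \in \Symp_0(S,\omega)$ and cite Lemma~\ref{lem:qm}(1), but that lemma only gives invariance under conjugation by elements of the domain $\Ham(S,\omega)$ itself. The clean fix is to take $g \in \Ham(S,\omega)$: any two embedded discs of equal area in a connected symplectic manifold are related by a Hamiltonian diffeomorphism, so this costs nothing. Second, in the sufficiency direction your bootstrapping from a uniform bound near $e$ to full continuity is correct, but you could shortcut it by appealing directly to Proposition~\ref{prop=conti_homog} once you have $|\mu(\varphi)| \le (N-1)D(\mu)$ on a $C^0$-neighborhood of the identity.
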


\begin{remark}\label{remark=disconti}
Let $S$ be a closed connected orientable surface whose  genus is at least two, and $\omega$ a symplectic form on $S$. Then, $\mu_P$ and $\mu_B$ are \emph{discontinuous} in the $C^0$-topology. Indeed, by Theorem~\ref{thm=mu_P}~(3), a small disc has the Calabi property with respect to $\mu_P$ and $\mu_B$.
By Theorem \ref{mfh property}, this Calabi property serves as an obstruction to the $C^0$-continuity.
\end{remark}

In contrast to Remark~\ref{remark=disconti}, we have the following corollary of  Theorem~\ref{mfh property}, which plays a key role in the proof of Theorem~\ref{C0 main thm}.

\begin{cor}[$C^0$-continuity of $\mu_P-\mu_B$]\label{cor=C^0-conti}
Let $S$ be a closed connected orientable surface whose  genus is at least two, and $\omega$ a symplectic form on $S$. Let $\mu_{P,B}\colon \Ham(S,\omega)\to \RR$ be the quasimorphism on $\Ham(S,\omega)$ defined as $\mu_{P,B}=\mu_P-\mu_B$. Then there exists a homogeneous quasimorphism $\overline{\mu}_{P,B}$ on $\overline{\Ham(S,\omega)}^{C^0}$ that satisfies the following two properties:
\begin{enumerate}[$(1)$]
  \item $\overline{\mu}_{P,B}$ is continuous in the $C^0$-topology;
  \item $\overline{\mu}_{P,B}$ is $\mathrm{Sympeo}_0(S,\omega)$-invariant.
\end{enumerate}
Here, $\mathrm{Sympeo}(S,\omega)$ denotes the $C^0$-closure of $\Symp(S,\omega)$ in the group of homeomorphisms, and
$\mathrm{Sympeo}_0(S,\omega)$ denotes the identity component of $\mathrm{Sympeo}(S,\omega)$. The group $\overline{\Ham(S,\omega)}^{C^0}$ is the $C^0$-closure of $\Ham(S,\omega)$ in $\mathrm{Sympeo}(S,\omega)$.
\end{cor}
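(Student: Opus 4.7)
The strategy is to first establish the $C^0$-continuity of $\mu_{P,B}$ on $\Ham(S,\omega)$ via the criterion of Entov--Polterovich--Py (Theorem~\ref{mfh property}), then extend it to the $C^0$-closure by density (Proposition~\ref{prop=conti}), and finally verify homogeneity and $\mathrm{Sympeo}_0(S,\omega)$-invariance by an approximation argument.

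\textbf{Step 1: $C^0$-continuity of $\mu_{P,B}$ on $\Ham(S,\omega)$.} Since $S$ is closed of genus $l \ge 2$, every sufficiently small embedded open disc $D \subset S$ is displaceable by a Hamiltonian diffeomorphism; fix a constant $A>0$ such that every open disc of area less than $A$ is displaceable. For such a disc $D$, the form $\omega|_D$ is exact (as $D$ is contractible), so by Theorem~\ref{thm=mu_P}~(3) both $\mu_P$ and $\mu_B$ are Calabi quasimorphisms and their restrictions to $\Ham(D,\omega|_D)$ coincide with $\mathrm{Cal}_D$. Therefore $\mu_{P,B}|_{\Ham(D,\omega|_D)} \equiv 0$. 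Applying Theorem~\ref{mfh property} with this constant $A$ yields that $\mu_{P,B}$ is continuous on $\Ham(S,\omega)$ in the relative $C^0$-topology inherited from $\overline{\Ham(S,\omega)}^{C^0}$.

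\textbf{Step 2: Continuous extension.} Since $S$ is compact, $\mathrm{Homeo}(S)$ equipped with the $C^0$-topology is a topological group, and $\overline{\Ham(S,\omega)}^{C^0}$ is a topological subgroup in which $\Ham(S,\omega)$ is by definition dense. Applying Proposition~\ref{prop=conti} with $G=\overline{\Ham(S,\omega)}^{C^0}$ and $H=\Ham(S,\omega)$, the quasimorphism $\mu_{P,B}$ admits a continuous quasimorphism extension $\overline{\mu}_{P,B}\colon \overline{\Ham(S,\omega)}^{C^0}\to\RR$.

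\textbf{Step 3: Homogeneity of the extension.} Fix $g \in \overline{\Ham(S,\omega)}^{C^0}$ and choose $h_n \in \Ham(S,\omega)$ with $h_n \to g$ in $C^0$. For each fixed $m\in\ZZ$, continuity of the group operations gives $h_n^m \to g^m$. Since $\overline{\mu}_{P,B}$ is continuous and $\mu_{P,B}$ is already homogeneous on $\Ham(S,\omega)$,
\[
\overline{\mu}_{P,B}(g^m)=\lim_{n\to\infty}\mu_{P,B}(h_n^m)=\lim_{n\to\infty}m\cdot\mu_{P,B}(h_n)=m\cdot\overline{\mu}_{P,B}(g).
\]
Thus $\overline{\mu}_{P,B}$ is homogeneous, which proves (1).

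\textbf{Step 4: $\mathrm{Sympeo}_0(S,\omega)$-invariance.} Combining Theorem~\ref{thm=mu_P}~(1) and (2), both $\mu_P$ and $\mu_B$ are $\Symp_0(S,\omega)$-invariant, hence so is $\mu_{P,B}$. Let $f \in \mathrm{Sympeo}_0(S,\omega)$ and $x \in \overline{\Ham(S,\omega)}^{C^0}$. Approximate $f$ by $f_n \in \Symp_0(S,\omega)$ and $x$ by $x_n \in \Ham(S,\omega)$ in the $C^0$-topology. Since $\Ham(S,\omega)$ is normal in $\Symp_0(S,\omega)$ we have $f_n x_n f_n^{-1} \in \Ham(S,\omega)$, and since $\mathrm{Homeo}(S)$ is a topological group under $C^0$, the conjugation map is jointly continuous, so $f_n x_n f_n^{-1} \to fxf^{-1}$. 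Invariance on the dense subgroup gives $\mu_{P,B}(f_n x_n f_n^{-1}) = \mu_{P,B}(x_n)$, and passing to the limit with continuity of $\overline{\mu}_{P,B}$ yields $\overline{\mu}_{P,B}(fxf^{-1})=\overline{\mu}_{P,B}(x)$, proving (2).

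The only genuinely subtle point is Step~1, where one must invoke the sharp $C^0$-continuity criterion of Theorem~\ref{mfh property} to convert the cancellation of the Calabi homomorphisms on small displaceable discs into global $C^0$-continuity of the difference $\mu_P-\mu_B$; the remaining steps are soft arguments using density together with continuity of the group operations on $\mathrm{Homeo}(S)$ with the $C^0$-topology.
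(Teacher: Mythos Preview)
Your proof is correct and follows essentially the same approach as the paper: both use Theorem~\ref{thm=mu_P}~(3) to see that $\mu_P$ and $\mu_B$ agree with $\mathrm{Cal}_D$ on small discs so that their difference vanishes there, apply the Entov--Polterovich--Py criterion (Theorem~\ref{mfh property}) for $C^0$-continuity, extend via Proposition~\ref{prop=conti}, and then deduce $\mathrm{Sympeo}_0$-invariance from $\Symp_0$-invariance (Theorem~\ref{thm=mu_P}~(1),(2)) by a density argument. Your Step~3 making the homogeneity of the extension explicit is a useful addition, since Proposition~\ref{prop=conti} as stated only yields a continuous quasimorphism and the paper leaves this verification implicit.
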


\begin{proof}
Let $D$ be a small disc in $S$. Then, by Theorem~\ref{thm=mu_P}~(3), $D$ has the Calabi property with respect to $\mu_P$ and $\mu_B$. Hence, the estimation given by the Calabi property of $D$ cancels for $\mu_{P,B}=\mu_P-\mu_B$. Therefore, $\mu_{P,B}$ fulfills the characterization of the $C^0$-continuity stated in Theorem~\ref{mfh property}.
In other words, $\mu_{P,B}$ is continuous in the relative topology on $\Ham(S,\omega)$ from $\overline{\Ham(S,\omega)}^{C^0}$. By applying Proposition~\ref{prop=conti}, we obtain (1). To prove (2), observe that $\overline{\mu}_{P,B}$ is $C^0$-continuous and that $\mu_{P,B}$ is $\Symp_0(S,\omega)$-invariant. Here, the latter follows from Theorem~\ref{thm=mu_P}~(1) and (2).
\end{proof}

\section{Construction of symplectomorphisms} \label{section=fluxlift}
 In this section, we construct four symplectomorphisms $\sigma_{\qd}$, $\sigma'_{\qd}$, $\tau_{\qd}$, $\tau'_{\qd}$ with compact support on a $1$-punctured torus $P_{\ee}$. Here, $P_{\ee}$ is parameterized by $\ee \in (0,1/4)$, and $\qd=(a,b,c,d)\in \RR^4$ is a quadruple that satisfies
\[
|c|\leq \ee\quad \textrm{and}\quad |d|\leq \ee.
\]
In Section~\ref{section=proof_mainthm}, we embed $P_{\ee}$ into a closed connected oriented surface $S$ in several manners, and obtain symplectomorphisms on $S$. (Therefore, the construction in this section may be regarded as that in a local model.) These elements play a key role to the proofs of Theorems~\ref{thm A} and Theorem~\ref{main thm}; see Section~\ref{section=proof_mainthm}. In Subsection~\ref{subsection=qd}, we explain the construction of $\sigma_{\qd}$, $\sigma'_{\qd}$, $\tau_{\qd}$, $\tau'_{\qd}$. In Subsection~\ref{subsection=properties_qd}, we prove several properties that will be employed in Section~\ref{section=proof_mainthm}.

\subsection{Constructions of $\sigma_{\qd}$, $\sigma'_{\qd}$, $\tau_{\qd}$, $\tau'_{\qd}$ }\label{subsection=qd}
 In this subsection, for $\ee\in \RR$ and for $\qd=(a,b,c,d)\in \RR^4$ such that
\begin{itemize}
  \item $0<\ee<\frac14$,
  \item $|c|\leq \ee$ and $|d|\leq \ee$,
\end{itemize}
we construct four symplectomorphisms $\sigma_{\qd}$, $\sigma'_{\qd}$, $\tau_{\qd}$, $\tau'_{\qd}$ with compact support on a $1$-punctured torus.
 First, we fix $\ee \in (0,1/4)$.  Then  we set $D_{\ee}=([0,1]\times[0,1])\setminus([2\ee, 1-2\ee] \times [2\ee, 1-2\ee])$.
Let $p\colon\RR^2\to\RR^2/\ZZ^2$ be the natural projection
and set $P_{\ee}=p(D_{\ee})$.
We note that $P_{\ee}$ is diffeomorphic to the 1-punctured torus.
We consider $P_{\ee}$ as a connected symplectic manifold with the symplectic form $\omega_0 = dx \wedge dy$, where $(x,y)$ is the standard coordinates on $P_{\ee} \subset \RR^2 / \ZZ^2$.
We define curves $\alpha,\beta\colon[0,1]\to P_{\ee}$ on $P_{\ee}$ by $\alpha(t)=p(0,t)$, $\beta(t)=p(t,0)$.

\begin{lem}\label{def of g}
For every pair of real numbers $(c,d)$ with $|c| \leq \ee$ and $|d| \leq \ee $,
there exist vector fields $Y_c$ and $Y'_d$ on $P_\epsilon$ with compact support such that
\begin{itemize}
  \item $\mathcal{L}_{Y_c}\omega_0=\mathcal{L}_{Y'_d}\omega_0=0$, 
  \item $(Y_c)_{p(x,y)}= c \frac{\partial}{\partial x}$ and $(Y'_d)_{p(x,y)}=  -d  \frac{\partial}{\partial y}$ for every $(x,y)\in ([-\ee ,\ee ]\times\RR)\cup(\RR\times [-\ee,\ee])$.
\end{itemize}
Here, $\mathcal{L}_{Y_c}$ and $\mathcal{L}_{Y'_d}$ are the Lie derivatives with respect to $Y_c$ and $Y'_d$, respectively.
\end{lem}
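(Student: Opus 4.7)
The plan is to realize $Y_c$ as the symplectic dual of a closed $1$-form $\eta_c$ on $\RR^2/\ZZ^2$ that is supported in $P_\ee$ and equals $c\,dy$ on the ``cross'' $K=p\bigl(([-\ee,\ee]\times\RR) \cup (\RR\times[-\ee,\ee])\bigr)$; this suffices because $\iota_{c\,\partial/\partial x}\omega_0 = c\,dy$. The strategy is to subtract from $c\,dy$ an exact correction $dH_c$ that vanishes on $K$ (so $\eta_c$ is unchanged there) but cancels $c\,dy$ on the inner hole $p((2\ee,1-2\ee)^2)$ (so $\eta_c$ has compact support in $P_\ee$). To implement this, fix once and for all a smooth cutoff $\psi\colon \RR/\ZZ \to [0,1]$ which is identically $0$ on $[-\ee,\ee]\pmod 1$ and identically $1$ on $[2\ee, 1-2\ee]$; since $\psi$ is locally constant near the identified endpoints, it is smooth on the circle.

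Set
\[
 H_c(x,y) = c\, y\,\psi(x)\psi(y),\qquad \eta_c = c\,dy - dH_c,
\]
and let $Y_c$ be the unique vector field on $\RR^2/\ZZ^2$ determined by $\iota_{Y_c}\omega_0 = \eta_c$. The factor $y\psi(y)$ and all its derivatives vanish at $y\in\{0,1\}$, so $H_c$ descends smoothly to the torus and $\eta_c$ is a well-defined closed $1$-form. Closedness gives $\mathcal{L}_{Y_c}\omega_0 = d\eta_c = 0$. The pointwise claims are then verified by inspecting $dH_c$ on the relevant regions: on $K$, either $\psi(x) = \psi'(x) = 0$ or $\psi(y) = \psi'(y) = 0$ (so also $y\psi(y)=0$ and $(y\psi(y))'=0$), whence $dH_c$ vanishes and $\eta_c = c\,dy$, giving $Y_c = c\,\partial/\partial x$; on the inner hole, $\psi(x)=\psi(y)=1$ and $\psi'(x)=\psi'(y)=0$, so $dH_c = c\,dy$ and $\eta_c = 0$, confirming that $Y_c$ is compactly supported in $P_\ee$.

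The vector field $Y'_d$ is obtained by the symmetric construction with the roles of $x$ and $y$ exchanged: take $H'_d(x,y) = d\, x\,\psi(x)\psi(y)$ and $\eta'_d = d\,dx - dH'_d$, and let $Y'_d$ satisfy $\iota_{Y'_d}\omega_0 = \eta'_d$. Since $\iota_{-d\,\partial/\partial y}\omega_0 = d\,dx$, the analogous verification yields $Y'_d = -d\,\partial/\partial y$ on $K$ and compact support in $P_\ee$. The construction involves no substantive obstacle; the only mildly delicate point is arranging that $H_c$ and $H'_d$ descend smoothly to $\RR^2/\ZZ^2$, which is handled by requiring the cutoff $\psi$ to be locally constant near the identified endpoints of the fundamental interval so that all boundary derivative conditions hold automatically.
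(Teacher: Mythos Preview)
Your approach is essentially the paper's: both realize $Y_c$ as the symplectic dual of a closed $1$-form obtained from $c\,dy$ by an exact correction. The paper phrases this via a quasi-periodic Hamiltonian $G^c$ on $\RR^2$ satisfying $G^c(x+m,y+n)=G^c(x,y)-cn$, while you work directly on the torus with a genuine function $H_c$; since only $dG^c$ matters and this is periodic, the two formulations are equivalent.

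There is, however, a small gap in the compact-support verification. You require $\psi\equiv 1$ only on the closed interval $[2\ee,1-2\ee]$, so your $\eta_c$ vanishes on the closed square $[2\ee,1-2\ee]^2$ but (for a generic such $\psi$) not on any open neighbourhood of it. Since $P_\ee$ is the \emph{open} complement of this closed square, a vector field that is nonzero arbitrarily close to $\partial[2\ee,1-2\ee]^2$ from the outside fails to have compact support in $P_\ee$: any sequence in the support converging to a boundary point has no subsequential limit inside $P_\ee$. The remedy is immediate---take $\psi\equiv 1$ on a slightly larger interval, say $[2\ee-\delta,\,1-2\ee+\delta]$ with $0<\delta<\ee$, so that $\eta_c$ vanishes on an open neighbourhood of the removed square. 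The paper handles exactly this point by requiring its function $G^c$ to be constant on an open neighbourhood $U$ of $[2\ee,1-2\ee]^2$.
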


\begin{figure}[htbp]
  \begin{minipage}[c]{0.45\hsize}
    \centering
    \includegraphics[width=7truecm]{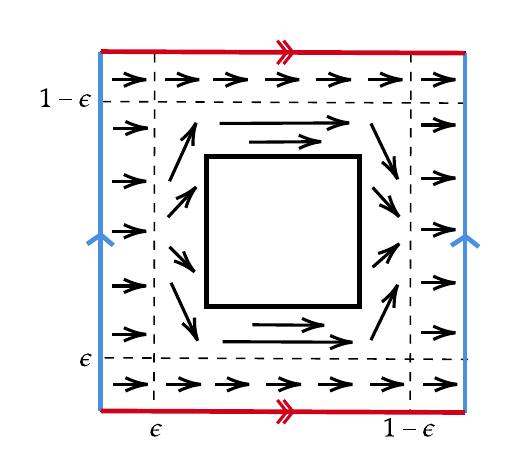}
  \end{minipage}
  \hfill
  \begin{minipage}[c]{0.45\hsize}
    \centering
    \includegraphics[width=7truecm]{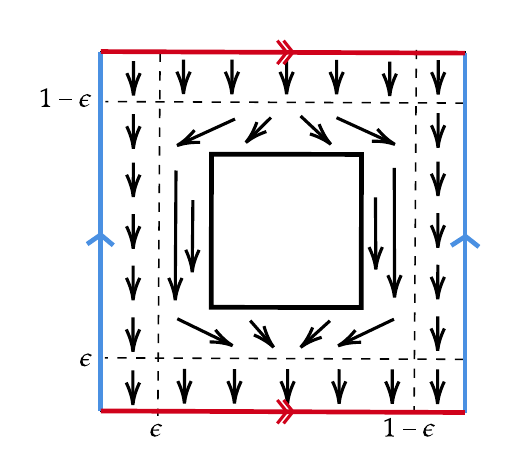}
  \end{minipage}
  \caption{The vector fields $Y_c$ and $Y'_d$  defining $\tau_{\qd}$ and $\tau'_{\qd}$ when $c>0$, $d>0$}
  \label{vec_field_Ya}
\end{figure}

\begin{proof}
Let $G^c$ be a smooth function on $\RR^2$ satisfying the following conditions:
\begin{itemize}
\item
$G^c(x+ m,y+ n)=G^c(x,y)- cn$ for every $(x,y)\in\RR^2$ and $m,n \in \ZZ$.
\item
$G^c(x,y)=-cy$ for every $(x,y)\in ([-\ee ,\ee ]\times\RR)\cup(\RR\times[-\ee ,\ee ])$.
\item
There exists an open neighborhood $U$ of $[2\ee ,1-2\ee  ]\times [2\ee ,1-2\ee ]$ such that
$G^c(x,y)=-  \frac{c}{2}$ for every $(x,y)\in U$.
\end{itemize}
Let $\hat{Y}_c$ be the Hamiltonian vector field generated by $G^c$.
Then, by the first and third conditions on $G^c$, $\hat{Y}_c$ induces the vector field $Y_c$ with compact support on $P_\epsilon$ (Figure \ref{vec_field_Ya}).
Since $\hat{Y}_c$ is a Hamiltonian vector field, we have that $\mathcal{L}_{Y_c}\omega_0=0$.
The second condition on $G^c$ implies that $(Y_c)_{p(x,y)}=c\frac{\partial}{\partial x}$ for every $(x,y)\in ([- \ee, \ee] \times\RR)\cup(\RR\times [-\ee, \ee])$.
We may construct $Y'_d$ in a similar manner.
\end{proof}

For  a pair of real numbers $(q,r)$ with $0 < r \leq |q| \leq \ee$, we define $I_{q,r}$ to be the open interval $(\frac{q-r}{2},\frac{q+r}{2})$ and $J_{q,r}$ to be the closed interval
$[-\frac{|q|-r}{2},\frac{|q|-r}{2}]$.
Note that $I_{-q,r}$, $I_{q,r}$ and $J_{q,r}$ are pairwise disjoint, and their union is a connected interval.

 Now we take a quadruple $\qd=(a,b,c,d) \in \RR^4$ that satisfies $|c|\leq \ee$ and $|d| \leq \ee$. We define  maps $\tau_{\qd}$ and $\tau'_{\qd}$ in the following manner: $\tau_{\qd}$ and $\tau'_{\qd}$ are defined to be the time-1 maps of the flows generated by $Y_c$ and $Y'_d$, respectively.
Since $\mathcal{L}_{Y_c}\omega_0=\mathcal{L}_{Y'_d}\omega_0=0$, we have $\tau_{\qd},\tau'_{\qd} \in \Symp_0(P_\epsilon,\omega_0)$.
We note that $\tau_{\qd}(p(x,y))=p(x+ c,y)$ for every $(x,y)\in (I_{-c,|c|}\times\RR)\cup(\RR \times  [-\ee,\ee] )$ and
$ \tau'_{\qd}(p(x,y))=p(x,y -  d)$ for every $(x,y)\in (\RR\times I_{-d,|d|})\cup( [-\ee,\ee] \times\RR)$.
We note that $\tau_{\qd}={\rm id}$ if $c=0$ and $\tau'_{\qd}={\rm id}$ if $d=0$.

 To define the remaining two symplectomorphisms $\sigma_{\qd}$ and $\sigma'_{\qd}$, we first define smooth functions $H_{q,r}$ and $\hat{H}_{q,r}$. For real numbers $q$ and $r$ with $0< r\leq |q| \leq \ee$, let $\rho_{q,r} \colon [-\ee ,\ee ] \to[-1,1]$ be a smooth function satisfying the following  three conditions:
\begin{enumerate}[(1)]
\item $\Supp(\rho_{q,r})\subset(-\frac{|q|+r}{2},\frac{|q|+r}{2} )$;
\item $\rho_{q,r}(x)+\rho_{q,r}(x+  q)= \frac{q}{|q|}$ for every $x\in I_{-q,r}$;
\item $\rho_{q,r}(x)= \frac{q}{|q|}$ for every $x \in J_{q,r}$.
\end{enumerate}
By the above conditions, we can see that $\rho_{q,r} \equiv  \frac{q}{|q|}$ in a neighborhood of $0$.
For real numbers $q$ and $r$ with $0 <  r\leq |q| \leq \ee$, 
 let $\hat{H}_{q,r}\colon [-\frac12, \frac12] \times \RR \to\RR$ be the function defined by
\[
\hat{H}_{q,r}(x,y)=
\begin{cases}
  -\rho_{q,r}(x) & \text{if $|x| \le \ee$}, \\
  0 & \text{otherwise}.
\end{cases}
\]
Then, $\hat{H}_{q,r}$ induces the smooth function $H_{q,r}\colon P_\epsilon\to\RR$ with compact support. Here, we note that $\hat{H}_{q,r}(\frac12,y)=\hat{H}_{q,r}(-\frac12,y)=0$ for every $y \in \RR$.
For $q=0$, we define $H_{0,r}$ as the function induced by $\hat{H}_{\ee,\ee}$.
Similarly,
let $\hat{H}'_{q,r}\colon \RR \times [-\frac12, \frac12] \to\RR$ be the function defined by
\[
\hat{H}'_{q,r}(x,y)=
\begin{cases}
  -\rho_{q,r}(y) & \text{if $|y| \le \ee$}, \\
  0 & \text{otherwise}.
\end{cases}
\]
Then, $\hat{H}'_{q,r}$ induces the smooth function $H'_{q,r}\colon P_\epsilon\to\RR$ with compact support.
For $q=0$, we define $H'_{0,r}$ as the function induced by $\hat{H}'_{\ee,\ee}$.

 Our definitions of $\sigma_{\qd}$ and $\sigma'_{\qd}$ proceed as follows: we set
\[\Delta= \Delta_{\qd} =
\begin{cases}
  \min\{|c|,|d|\} & \text{if $c\neq 0$ and $d \neq 0$}, \\
  \max\{|c|,|d|\}& \text{otherwise}.
\end{cases}\]
For a real number $t$,
we define $\sigma^t_{\qd} \in\Symp_0(P_\epsilon,\omega_0)$ by
\begin{equation*}\label{fab}
\sigma^t_{\qd}(z)=
\begin{cases}
\varphi_{H_{c,\Delta}}^{bt}(z) & \text{if $z\in p(I_{-c,\Delta}\times\RR)$}, \\
z & \text{otherwise},
\end{cases}
\end{equation*}
and set $\sigma_{\qd}=\sigma^1_{\qd}$.
Similarly, we define $(\sigma'_{\qd})^t \in\Symp_0(P_\epsilon,\omega_0)$ by
\begin{equation*}\label{fab_prime}
(\sigma'_{\qd})^t (z)=
\begin{cases}
 \varphi_{ H'_{d,\Delta}}^{ at }(z) & \text{if $z\in p(\RR \times I_{ -d ,\Delta})$}, \\
z & \text{otherwise},
\end{cases}
\end{equation*}
and set $\sigma'_{\qd}=(\sigma'_{\qd})^1$.
 Then, the vector filed $X_{\qd}$ (resp. $X'_{\qd}$) that generates the flow $\{ \sigma_{\qd}^t \}_{t\in\RR}$ (resp. $\{ (\sigma'_{\qd})^t \}_{t\in\RR}$) is as in Figure \ref{vec_field_X_A}.

\begin{figure}[htbp]
  \begin{minipage}[c]{0.45\hsize}
    \centering
    \includegraphics[width=7truecm]{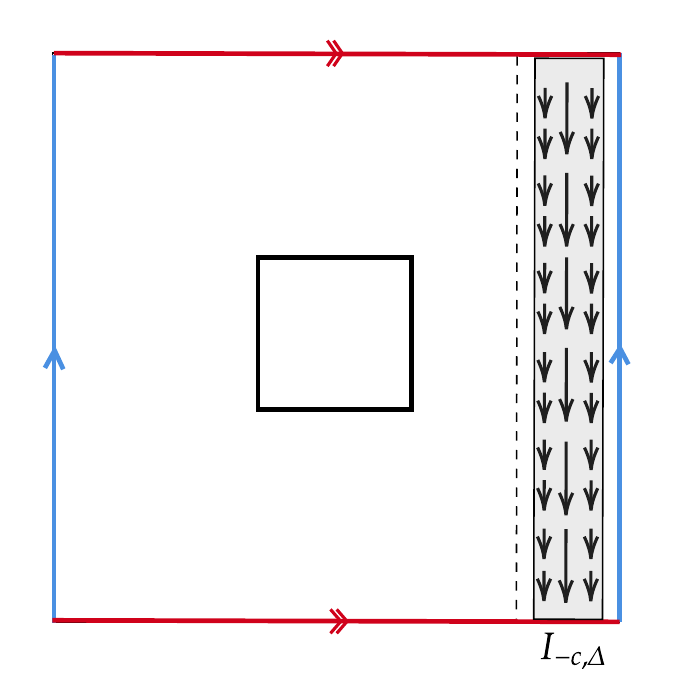}
  \end{minipage}
  \hfill
  \begin{minipage}[c]{0.45\hsize}
    \centering
    \includegraphics[width=7truecm]{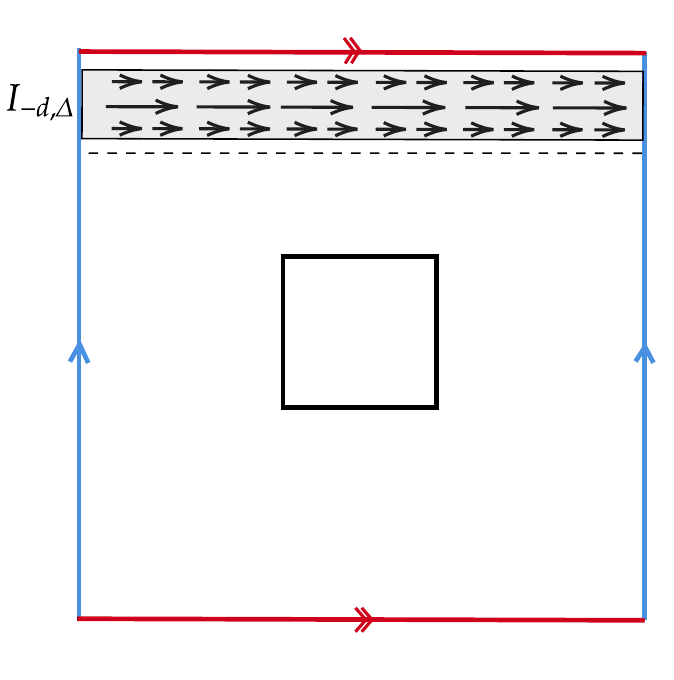}
  \end{minipage}
  \caption{The vector fields $X_{\qd}$ and $X'_{\qd}$  defining $\sigma_{\qd}$ and $\sigma'_{\qd}$ when $a>0$, $b>0$, $c>0$ and $d>0$ }
  \label{vec_field_X_A}
\end{figure}

\subsection{Properties of these symplectomorphisms }\label{subsection=properties_qd}
 In this subsection, we verify several properties of $\sigma_{\qd}$, $\sigma'_{\qd}$, $\tau_{\qd}$, $\tau'_{\qd}$ constructed in Subsection~\ref{subsection=qd}.  Lemma~\ref{lem=sigmatau}, Lemma~\ref{local flux calculation} and Proposition~\ref{prop=flux} are the keys to the proof of Theorem~\ref{thm=non_ext} in Subsection~\ref{subsection=proof_non_ext}. Throughout this subsection, we fix $\ee\in (0,1/4)$.

\begin{lem}\label{lem=sigmatau}
Let $\qd =(a,b,c,d)\in \mathbb{R}^4$ satisfy $|c|\leq \ee$ and $|d|\leq \ee$. Then the following hold:
\begin{enumerate}[$(1)$]
 \item $\sigma_{\qd}$ commutes with $\tau_{\qd}\sigma_{\qd}^{-1}\tau_{\qd}^{-1}$;
 \item $\sigma^{\prime}_{\qd}$ commutes with $\tau^{\prime -1}_{\qd}\sigma^{\prime -1}_{\qd}\tau^{\prime}_{\qd}$;
 \item $\sigma_{\qd}$ commutes with $\tau^{\prime}_{\qd}$, and $\sigma^{\prime}_{\qd}$ commutes with $\tau_{\qd}$.
\end{enumerate}
\end{lem}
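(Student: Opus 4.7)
The plan is to reduce all three claims to the elementary algebraic fact that two diffeomorphisms of the form $(x,y)\mapsto(x,y+h_i(x))$ (``vertical shears'') always commute, and similarly for horizontal shears $(x,y)\mapsto(x+k_i(y),y)$. The key structural observation, visible directly from the construction in Subsection~\ref{subsection=qd}, is that $\sigma_{\qd}$ is a vertical shear supported in $p(\overline{I_{-c,\Delta}}\times\RR)$, $\sigma'_{\qd}$ is a horizontal shear supported in $p(\RR\times\overline{I_{-d,\Delta}})$, while $\tau_{\qd}$ and $\tau'_{\qd}$ act as uniform horizontal and vertical translations respectively on the ``$+$''-shaped corridor $([-\ee,\ee]\times\RR)\cup(\RR\times[-\ee,\ee])$ containing each of these supports.

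For (1), since $\Delta\leq|c|\leq\ee$ places $\overline{I_{-c,\Delta}}$ inside $[-\ee,\ee]$, the support of $\sigma_{\qd}$ sits in the vertical corridor where $\tau_{\qd}$ is genuine translation by $c$ in the $x$-direction. A direct coordinate computation then gives that $\tau_{\qd}\sigma_{\qd}^{-1}\tau_{\qd}^{-1}$ is again a vertical shear, now supported in the partner strip $p(\overline{I_{c,\Delta}}\times\RR)$, so that the two vertical shears $\sigma_{\qd}$ and $\tau_{\qd}\sigma_{\qd}^{-1}\tau_{\qd}^{-1}$ commute, yielding~(1). Statement~(2) is formally identical after exchanging the two coordinates; the conjugation is written with $\tau^{\prime-1}_{\qd}$ on the outside rather than $\tau'_{\qd}$ precisely so that the shifted support $\tau^{\prime-1}_{\qd}(p(\RR\times\overline{I_{-d,\Delta}}))$ equals the partner strip $p(\RR\times\overline{I_{d,\Delta}})$, given the sign convention $Y'_d=-d\,\partial/\partial y$.

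For (3), I would argue that $\Supp(\sigma_{\qd})\subset p([-\ee,\ee]\times\RR)$ is $x$-invariant (because $\sigma_{\qd}$'s non-triviality depends only on $x$), and $\tau'_{\qd}$ preserves the $x$-coordinate throughout this corridor; hence $\tau'_{\qd}$ preserves both $\Supp(\sigma_{\qd})$ and its complement in $P_\ee$. On $\Supp(\sigma_{\qd})$, both $\sigma_{\qd}$ and $\tau'_{\qd}$ have the form $(x,y)\mapsto(x,y+h(x))$ (with $\tau'_{\qd}$'s shift constant in $x$), so they commute; on the complement, $\sigma_{\qd}$ is the identity, so commutation is automatic. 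The second half of~(3) follows by exchanging the roles of $x$ and $y$ throughout.

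The main obstacle is the geometric bookkeeping: one must carefully verify that every support appearing in the argument lies inside the corridor where $\tau_{\qd}$ and $\tau'_{\qd}$ are true translations, so that the conjugates of $\sigma_{\qd}$ and $\sigma'_{\qd}$ retain their shear form and the puncture is never encountered. This is ensured by the combined hypotheses $|c|,|d|\leq\ee$, $\Delta\leq\max(|c|,|d|)$, and $\ee<1/4$, which together keep each of $I_{\pm c,\Delta}$ and $I_{\pm d,\Delta}$ inside $[-\ee,\ee]$ and hence disjoint from the punctured square $[2\ee,1-2\ee]\times[2\ee,1-2\ee]$.
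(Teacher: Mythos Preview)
Your proof is correct, and for part~(3) it is essentially identical to the paper's. For parts~(1) and~(2), however, you take a somewhat different route. The paper observes that when $c\neq 0$ the supports $p(I_{-c,\Delta}\times\RR)$ and $p(I_{c,\Delta}\times\RR)$ are \emph{disjoint} (this uses $\Delta\leq |c|$, which holds precisely when $c\neq 0$), so $\sigma_{\qd}$ and $\tau_{\qd}\sigma_{\qd}^{-1}\tau_{\qd}^{-1}$ commute for the trivial reason that their supports never meet; the case $c=0$ is dispatched separately since then $\tau_{\qd}=\mathrm{id}$. You instead invoke the more general algebraic fact that any two vertical shears $(x,y)\mapsto(x,y+h_i(x))$ commute, which does not require disjoint supports at all. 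Both arguments are valid; the paper's is shorter and avoids checking the shear form of the conjugate, while yours is more robust and would survive even if the strips overlapped.

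One small imprecision: you write ``$\Delta\leq|c|\leq\ee$'', but by the definition of $\Delta$ this fails when $c=0$ (then $\Delta=|d|$). This does not damage your argument, since the $c=0$ case is trivial and your shear-commutation argument does not actually need the inequality; it is only used to locate supports inside $[-\ee,\ee]$, and for that the weaker bound $\Delta\leq\ee$ suffices.
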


\begin{proof}

  First, we prove (1). If $c = 0$, then $\tau_{\qd} = {\rm id}$ and thus the claim is clear.
  If $c\neq0$, then $\Supp(\sigma_{\qd}) \subset p( I_{-c,\Delta} \times \RR )$ and
  $\Supp(\tau_{\qd}\sigma_{\qd}^{-1}\tau_{\qd}^{-1}) \subset p( I_{c,\Delta} \times \RR )$ are disjoint. Hence we obtain the claim.
  We can show (2) as well as (1).
  Finally, we prove (3). Since $\Supp(\sigma_{\qd}) \subset p([-\ee,\ee] \times \RR )$, the values of $H_{c,\Delta}$ depend only on the $x$-coordinate and $\tau'|_{p([-\ee,\ee] \times \RR )}$ is generated by the vector field $-d \frac{\partial}{\partial y}$ on  $p([-\ee,\ee] \times \RR )$,
   $\sigma_{\qd}$ commutes with $\tau^{\prime}_{\qd}$. Similarly, we can show that $\sigma^{\prime}_{\qd}$ commutes with $\tau_{\qd}$. \qedhere

\end{proof}

 Since the center of the fundamental group of $P_\epsilon$ is trivial, \cite[Proposition 5.1]{Fa} implies that the flux group of $(P_\epsilon,\omega_0)$ is zero and thus the flux homomorphism gives a group homomorphism $\flux_{\omega_0}$ from $\Symp_0(P_\epsilon,\omega_0)$ to $H_c^1(P_\epsilon;\RR)$. 
The values of the flux homomorphism at $\sigma_{\qd}$, $\sigma'_{\qd}$, $\tau_{\qd}$ and $\tau'_{\qd}$ are determined as follows.

\begin{lem}\label{local flux calculation}
Let $\qd =(a,b,c,d)\in \mathbb{R}^4$ satisfy $|c|\leq \ee$ and $|d|\leq \ee$. Then,  we have that
\begin{gather*}
  { \flux }_{\omega_0}\left( \sigma_{\qd} \right)=b[\beta]^\ast, \quad
  { \flux }_{\omega_0}\left( \sigma'_{\qd} \right)=a[\alpha]^\ast,\\
  { \flux }_{\omega_0}\left( \tau_{\qd} \right)=c[\alpha]^\ast, \quad
  { \flux }_{\omega_0}\left( \tau'_{\qd} \right)=d[\beta]^\ast.
\end{gather*}
 Here, $([\alpha]^*,[\beta]^*)$ denotes the dual basis to the basis $([\alpha],[\beta])$ of  $H_1(P_\ee;\RR).$
\end{lem}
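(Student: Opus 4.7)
The plan is to exploit that each of $\sigma_\qd$, $\sigma'_\qd$, $\tau_\qd$, $\tau'_\qd$ is the time-$1$ map of a flow generated by a time-\emph{independent} vector field $Z$ on $P_\ee$. By the definition of the flux homomorphism, this reduces the lemma to computing
\[\widetilde{\flux}_{\omega_0}(\varphi_Z^1) \;=\; [\iota_Z\omega_0]\in H^1_c(P_\ee;\RR)\]
for each of the four generators $Z$. Since the pairing $H^1_c(P_\ee;\RR)\otimes H_1(P_\ee;\RR)\to\RR$ is perfect and $\{[\alpha],[\beta]\}$ is a basis of $H_1$, it suffices to evaluate $\int_\alpha\iota_Z\omega_0$ and $\int_\beta\iota_Z\omega_0$ in each case.

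For $\tau_\qd$ and $\tau'_\qd$ the computation is immediate. Lemma~\ref{def of g} gives the explicit formulas $Y_c = c\,\partial_x$ and $Y'_d = -d\,\partial_y$ on the cross region $([-\ee,\ee]\times\RR)\cup(\RR\times[-\ee,\ee])$, which contains both cycles $\alpha$ and $\beta$. Contracting with $\omega_0=dx\wedge dy$ yields $\iota_{Y_c}\omega_0 = c\,dy$ and $\iota_{Y'_d}\omega_0 = d\,dx$ along these cycles. Since $\alpha$ has tangent $\partial_y$ and $\beta$ has tangent $\partial_x$, direct integration over the unit interval gives $\widetilde{\flux}_{\omega_0}(\tau_\qd) = c[\alpha]^\ast$ and $\widetilde{\flux}_{\omega_0}(\tau'_\qd) = d[\beta]^\ast$.

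For $\sigma_\qd$, the first step is to identify the generator explicitly. Because $H_{c,\Delta}$ depends only on the $x$-coordinate, its Hamiltonian vector field equals $-\rho'_{c,\Delta}(x)\partial_y$, which is purely vertical and preserves each line $\{x\}\times\RR$. Consequently the truncation in the definition of $\sigma^t_\qd$ produces a globally smooth time-independent generator $X_\qd = -b\rho'_{c,\Delta}(x)\partial_y$ on $p(I_{-c,\Delta}\times\RR)$, extended by $0$; smoothness at the boundary of the strip follows from properties $(1)$ and $(3)$ of $\rho_{c,\Delta}$, which make $\rho_{c,\Delta}$ locally constant, hence $\rho'_{c,\Delta}$ equal to $0$, at the endpoints of $I_{-c,\Delta}$. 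Then $\iota_{X_\qd}\omega_0 = b\rho'_{c,\Delta}(x)\,dx$, which pairs trivially with the vertical cycle $\alpha$. Pulling back along $\beta$ and pushing the support into a fundamental domain for the $x$-coordinate that contains $I_{-c,\Delta}$, we get
\[\int_\beta\iota_{X_\qd}\omega_0 \;=\; b\int_{I_{-c,\Delta}}\rho'_{c,\Delta}(x)\,dx \;=\; b\Bigl[\rho_{c,\Delta}\bigl(\tfrac{-c+\Delta}{2}\bigr) - \rho_{c,\Delta}\bigl(\tfrac{-c-\Delta}{2}\bigr)\Bigr].\]
Property $(3)$ identifies the interior endpoint as lying in $J_{c,\Delta}$, giving $\rho_{c,\Delta} = c/|c|$ there; property $(1)$ puts the exterior endpoint outside the support of $\rho_{c,\Delta}$, giving $0$. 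The computation for $\sigma'_\qd$ is identical after exchanging the roles of the $x$ and $y$ coordinates.

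The main bookkeeping obstacle is the sign tracking in the $\sigma_\qd$ and $\sigma'_\qd$ steps. The sign of $c$ determines on which side of $0$ the strip $I_{-c,\Delta}$ lies in the fundamental domain, and consequently which of the two endpoints carries the value $c/|c|=\pm1$. In both cases $c>0$ and $c<0$, the appropriate cancellation between the sign $c/|c|$ and the orientation in which $\beta$ traverses the strip yields the uniform bracket value $+1$, producing the asserted $+b$; an analogous sign check for $\sigma'_\qd$ produces the asserted $+a$, independent of the sign of $d$.
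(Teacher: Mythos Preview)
Your proof is correct and follows essentially the same route as the paper: identify the time-independent generating vector field, compute $\iota_Z\omega_0$, and pair with $[\alpha]$ and $[\beta]$ using the fundamental theorem of calculus together with the endpoint values of $\rho_{c,\Delta}$. The only minor omission is the degenerate case $c=0$ for $\sigma_\qd$ (and $d=0$ for $\sigma'_\qd$), where $c/|c|$ is undefined and the strip $I_{-c,\Delta}$ no longer fits the assumed framework; the paper handles this separately via the special convention $H_{0,r}:=H_{\ee,\ee}$, and the same one-line argument goes through.
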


\begin{proof}
First, we assume that $c\neq0$.
 We recall that $X_{\qd}$ is the vector field generated by the flow $\{ \sigma_{\qd}^t \}_{t\in\RR}$ (Figure \ref{vec_field_X_A}).
Then, we have that
\begin{equation*}
\left(X_{\qd}\right)_z=
\begin{cases}
b\cdot\left(X_{H_{c,\Delta}}\right)_z & \text{if }z\in p(I_{-c,\Delta}\times\RR), \\
0 & \text{otherwise}.
\end{cases}
\end{equation*} Hence,
\begin{equation*}
\left(\iota_{X_{\qd}}\omega_0\right)_{z=p(x,y)}=
\begin{cases}
  \displaystyle
-\left(d(b\cdot H_{c,\Delta})\right)_z=b \frac{d\rho_{c,\Delta}}{dx} (x)dx & \text{if }z\in p(I_{-c,\Delta}\times\RR), \\
0 & \text{otherwise}.
\end{cases}
\end{equation*}
Since $\rho_{c,\Delta}$ is constant in a neighborhood of $0$,
we have that $ \frac{d\rho_{c,\Delta}}{dx} (0)=0$; hence
\[\left[\left(\iota_{X_{\qd}}\omega_0\right)\right]([\alpha])=\int_\alpha\iota_{X_{\qd}}\omega_0=\int_ \alpha b \frac{d\rho_{c,\Delta}}{dx} (0)  dx = 0.\]
Since
\[\rho_{c,\Delta} \left( \frac{-c-\Delta}{2} \right) =
\begin{cases}
0 & \text{if $c > 0$}, \\
-1 & \text{if $c < 0$},
\end{cases}
\quad \text{and} \quad
\rho_{c,\Delta} \left( \frac{-c+\Delta}{2} \right) =
\begin{cases}
1 & \text{if $c > 0$}, \\
0 & \text{if $c < 0$},
\end{cases}
\]
we have that
\begin{align*}
      &\left[\left(\iota_{X_{\qd}}\omega_0\right)\right]([\beta]) \\
      &=\int_\beta\iota_{X_{\qd}}\omega_0\\
      & =\int_{I_{-c,\Delta}}b \frac{d\rho_{c,\Delta}}{dx} (x)dx \\
      & = b\left(\rho_{c,\Delta} \left(\frac{-c+\Delta}{2} \right)-\rho_{c,\Delta} \left(\frac{-c-\Delta}{2} \right)\right) \\
      &=b.
\end{align*}
Hence we obtain that
\[{ \flux }_{\omega_0}\left(\sigma_{\qd} \right)=b[\beta]^\ast.\]
If $c=0$, then $H_{c,\Delta}(z) = - \rho_{\ee,\ee}(x) $ for $z=p(x,y) \in p(I_{-\ee,\ee}\times \RR)$.
By the same argument as above, we obtain ${ \flux }_{\omega_0}\left(\sigma_{\qd} \right)=b[\beta]^\ast.$
 Similarly, we may obtain that ${ \flux }_{\omega_0}\left(\sigma'_{\qd}\right)=a[\alpha]^{\ast}$.

 Next, we proceed  to calculate ${ \flux }_{\omega_0}\left(\tau_{\qd}\right)$.
By the definition of $Y_c$,
\begin{equation*}
\left(\iota_{Y_c}\omega_0\right)_{z=p(x,y)}= c dy
\end{equation*}
for every $(x,y)\in ([-\ee,\ee]\times\RR)\cup(\RR \times  [-\ee,\ee] )$.
Therefore,  since $\alpha^\ast dy=dt$ and $\beta^\ast dy=0$,
\begin{align*}
      \left[\left(\iota_{Y_c}\omega_0\right)\right]([\alpha])&=\int_\alpha\iota_{Y_c}\omega_0=\int_ \alpha  c dy=c,\\
       \left[\left(\iota_{Y_c}\omega_0\right)\right]([\beta])&=\int_\beta\iota_{Y_c}\omega_0=\int_\beta c dy=0.
\end{align*}
Hence,  we obtain that
\[{ \flux }_{\omega_0}\left( \tau_{\qd} \right)=c[\alpha]^\ast,\]
as desired.
 We may have that ${ \flux }_{\omega_0}\left(\tau'_{\qd}\right)=d[\beta]^{\ast}$ in a  manner similar  to one above.
\end{proof}

 The next goal is to prove Proposition~\ref{prop=flux}.
\begin{lem}\label{ham representation}
  Let $\qd =(a,b,c,d)\in \mathbb{R}^4$ satisfy $|c|\leq \ee$ and $|d|\leq \ee$.
  \begin{enumerate}[$(1)$]
    \item   If $c \neq 0$, then
      $[\sigma_{\qd},\tau_{\qd}]=\varphi_{H_{c,\Delta}}^b $.
    \item   If $d \neq 0$, then
    $[\sigma'_{\qd},{\tau}_{\qd}^{\prime -1}]=\varphi_{ H'_{d,\Delta} }^{a}.$
  \end{enumerate}
\end{lem}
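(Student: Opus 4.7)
The plan is to prove both parts of Lemma~\ref{ham representation} by writing the commutator as a product of two pieces with disjoint supports, each of which coincides with one ``half'' of the target Hamiltonian flow. I focus on part~(1); the argument for part~(2) is identical modulo swapping $x$ and $y$ together with the corresponding sign conventions.

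Set $R_- = p(I_{-c,\Delta}\times \RR)$ and $R_+ = p(I_{c,\Delta}\times \RR)$. From the proof of Lemma~\ref{lem=sigmatau}~(1) one has $\Supp(\sigma_{\qd})\subset R_-$ and $\Supp(\tau_{\qd}\sigma_{\qd}^{-1}\tau_{\qd}^{-1})\subset R_+$, and $R_-\cap R_+ = \emptyset$ by the way $I_{-c,\Delta}$, $J_{c,\Delta}$ and $I_{c,\Delta}$ are nested. Consequently the two factors commute and
\[
[\sigma_{\qd},\tau_{\qd}] = \sigma_{\qd}\cdot(\tau_{\qd}\sigma_{\qd}^{-1}\tau_{\qd}^{-1})
\]
acts as $\sigma_{\qd}$ on $R_-$, as $\tau_{\qd}\sigma_{\qd}^{-1}\tau_{\qd}^{-1}$ on $R_+$, and as the identity outside $R_-\cup R_+$.

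On the other hand, since $H_{c,\Delta}(p(x,y))=-\rho_{c,\Delta}(x)$, a direct computation of $X_{H_{c,\Delta}}$ (using the same sign convention as in Lemma~\ref{local flux calculation}) gives
\[
\varphi^b_{H_{c,\Delta}}(p(x,y)) = p\bigl(x,\; y - b\,\rho'_{c,\Delta}(x)\bigr),
\]
and its support is precisely $R_-\cup R_+$, because $\rho'_{c,\Delta}$ vanishes both on $J_{c,\Delta}$ (where $\rho_{c,\Delta}$ is constantly $c/|c|$) and outside $\bigl(-\tfrac{|c|+\Delta}{2},\tfrac{|c|+\Delta}{2}\bigr)$. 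On $R_-$ the equality $[\sigma_{\qd},\tau_{\qd}]=\varphi^b_{H_{c,\Delta}}$ is tautological from the definition of $\sigma_{\qd}$. On $R_+$, for $p(x,y)$ with $x\in I_{c,\Delta}$ one computes
\[
(\tau_{\qd}\sigma_{\qd}^{-1}\tau_{\qd}^{-1})(p(x,y)) = p\bigl(x,\; y + b\,\rho'_{c,\Delta}(x-c)\bigr),
\]
and the matching reduces to the identity $\rho'_{c,\Delta}(x-c) = -\rho'_{c,\Delta}(x)$ for $x\in I_{c,\Delta}$, which is obtained by differentiating the defining relation $\rho_{c,\Delta}(z)+\rho_{c,\Delta}(z+c)=c/|c|$ for $z\in I_{-c,\Delta}$ (property~(2) of $\rho_{q,r}$).

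The main obstacle is simply sign-bookkeeping: one must track consistently the signs in the vector fields $Y_c$ and $Y'_d$ of Lemma~\ref{def of g}, the relation between $\tau_{\qd}, \tau'_{\qd}$ and the corresponding translations (which is what forces part~(2) to use $\tau'^{-1}_{\qd}$ rather than $\tau'_{\qd}$, so that conjugation moves the support from $\RR\times I_{-d,\Delta}$ onto $\RR\times I_{d,\Delta}$), and the sign in the Hamiltonian flow of $H_{c,\Delta}$ and $H'_{d,\Delta}$. Once those are aligned, the decisive algebraic input is the elementary symmetry relation for $\rho'_{c,\Delta}$ (respectively $\rho'_{d,\Delta}$) that glues the two localized pieces into a single global Hamiltonian flow.
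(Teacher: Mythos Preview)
Your proof is correct and follows essentially the same route as the paper: decompose $[\sigma_{\qd},\tau_{\qd}]$ into the two disjointly supported pieces $\sigma_{\qd}$ on $R_-$ and $\tau_{\qd}\sigma_{\qd}^{-1}\tau_{\qd}^{-1}$ on $R_+$, identify the first with $\varphi^b_{H_{c,\Delta}}$ by definition, and match the second using property~(2) of $\rho_{c,\Delta}$. The only cosmetic difference is that the paper invokes the additive relation $\rho_{c,\Delta}(x)+\rho_{c,\Delta}(x+c)=c/|c|$ directly at the level of Hamiltonians (so that the conjugated Hamiltonian differs from $-H_{c,\Delta}$ by a constant on $R_+$), whereas you differentiate that relation and compare the explicit coordinate expressions for the flows; these are equivalent.
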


\begin{proof}
Since $\rho_{c,\Delta}(x)+\rho_{c,\Delta}(c+x)=\frac{c}{|c|}$ for every $x\in I_{-c,\Delta}$,  we deduce from the definition of $\sigma_{\qd}$  that
\begin{equation*}
\tau_{\qd} \sigma_{\qd}^{-1} \tau_{\qd}^{-1}(z)=
\begin{cases}
\varphi_{H_{c,\Delta}}^b(z) & \text{if }z\in p\left(I_{c,\Delta}\times\RR\right), \\
z & \text{otherwise}.
\end{cases}
\end{equation*}

\begin{figure}[htbp]
  \begin{minipage}[c]{0.45\hsize}
    \centering
    \includegraphics[width=7truecm]{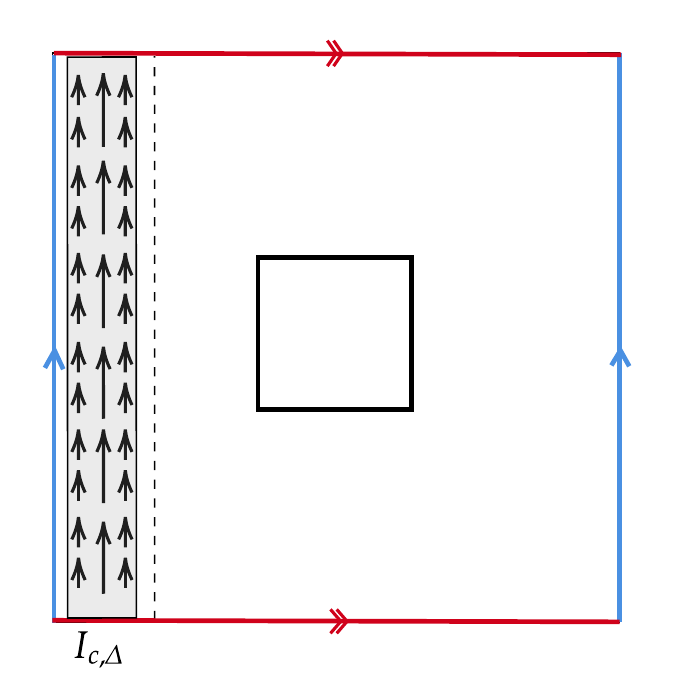}
  \end{minipage}
  \hfill
  \begin{minipage}[c]{0.45\hsize}
    \centering
    \includegraphics[width=7truecm]{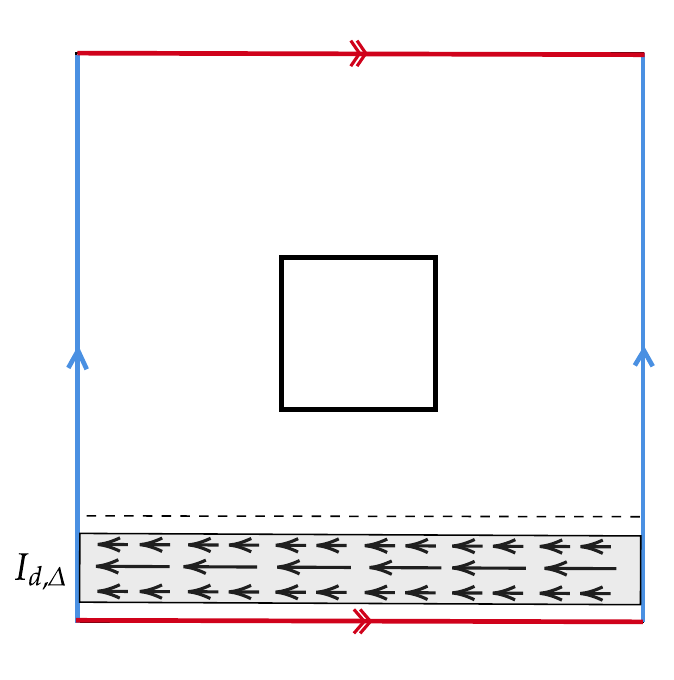}
    \end{minipage}
  \caption{The vector fields generating the flows $\{\tau_{\qd}\sigma_{\qd}^{-t}\tau_{\qd}^{-1}\}_{t\in\RR}$ and $\{\tau_{\qd}^{\prime -1}(\sigma_{\qd}^{\prime})^{-t}\tau_{\qd}^{\prime}\}_{t\in\RR}$ }
  \label{flow}
\end{figure}

 Hence on $[\sigma_{\qd},\tau_{\qd}]=\sigma_{\qd} \left( \tau_{\qd}\sigma_{\qd}^{-1}\tau_{\qd}^{-1} \right)$, we have that
\begin{equation*}
[\sigma_{\qd},\tau_{\qd}](z)=
\begin{cases}
\varphi_{H_{c,\Delta}}^b(z) & \text{if }z\in p\left((I_{c,\Delta}\sqcup I_{-c,\Delta})\times\RR\right), \\
z & \text{otherwise}.
\end{cases}
\end{equation*}
By observing that $\varphi_{H_{c,\Delta}}^b=\mathrm{id}$ on $P_{\ee}\setminus p\left((I_{c,\Delta}\sqcup I_{-c,\Delta})\times\RR\right)$, we conclude that $[\sigma_{\qd},\tau_{\qd}]=\varphi_{H_{c,\Delta}}^b$.
This completes the proof of (1). The proof of (2) is similar to one above.

\end{proof}

\begin{lem}\label{local ham integration}
 For every pair of real numbers $(q,r)$ with $0<r\leq |q| \leq \ee$,
\[\int_{P_\epsilon}H_{q,r}\omega_0=-q  \quad \textrm{and}\quad \int_{P_\epsilon}H'_{q,r}\omega_0=-q.\]
\end{lem}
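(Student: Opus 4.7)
The plan is to reduce the integrals to a one-variable integral of $\rho_{q,r}$ over $[-\epsilon,\epsilon]$ and then exploit the three defining properties of $\rho_{q,r}$ stated just before the definition of $H_{q,r}$.

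First I would observe that, by the very definition of $\hat{H}_{q,r}$, the function $H_{q,r}$ on $P_\epsilon$ is supported in the closed strip $p([-\epsilon,\epsilon]\times [0,1])$. Since $\epsilon<1/4$, this strip is disjoint from the removed middle square $p((2\epsilon,1-2\epsilon)^2)$, so the strip lies entirely in $P_\epsilon$ and has $\omega_0$-area equal to $2\epsilon$. Moreover, on this strip, $\hat{H}_{q,r}(x,y)=-\rho_{q,r}(x)$ depends only on $x$. Hence Fubini's theorem gives
\[
\int_{P_\epsilon} H_{q,r}\,\omega_0 \;=\; \int_{-\epsilon}^{\epsilon}\!\int_{0}^{1}(-\rho_{q,r}(x))\,dy\,dx \;=\; -\int_{-\epsilon}^{\epsilon}\rho_{q,r}(x)\,dx,
\]
so it suffices to show that $\int_{-\epsilon}^{\epsilon}\rho_{q,r}(x)\,dx = q$.

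Next, by the support condition $\Supp(\rho_{q,r})\subset(-\tfrac{|q|+r}{2},\tfrac{|q|+r}{2})$, the integral restricts to this smaller interval, which is the disjoint union of $I_{-q,r}$, $J_{q,r}$, and $I_{q,r}$ (up to the endpoints of $J_{q,r}$, which contribute nothing). On $J_{q,r}$ the function equals $q/|q|$, which yields
\[
\int_{J_{q,r}}\rho_{q,r}(x)\,dx \;=\; \frac{q}{|q|}\cdot(|q|-r).
\]
For the remaining two pieces I would use the translation $y=x+q$, which sends $I_{-q,r}$ bijectively onto $I_{q,r}$. Applying the functional equation $\rho_{q,r}(x)+\rho_{q,r}(x+q)=q/|q|$ on $I_{-q,r}$ and then changing variables,
\[
\int_{I_{-q,r}}\!\rho_{q,r}(x)\,dx + \int_{I_{q,r}}\!\rho_{q,r}(y)\,dy
\;=\; \int_{I_{q,r}}\bigl(\rho_{q,r}(y-q)+\rho_{q,r}(y)\bigr)\,dy
\;=\; \frac{q}{|q|}\cdot r.
\]
Adding the three contributions gives $(q/|q|)(|q|-r)+(q/|q|)\,r=q$, so $\int_{P_\epsilon} H_{q,r}\,\omega_0=-q$. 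The identity for $H'_{q,r}$ follows by the identical argument with the roles of $x$ and $y$ exchanged, since $\hat{H}'_{q,r}$ is supported in $\RR\times[-\epsilon,\epsilon]$ and depends only on $y$.

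There is no serious obstacle here: the only mild point to check is that the strip carrying the support of $H_{q,r}$ is fully contained in $P_\epsilon$, which uses only the standing hypothesis $\epsilon<1/4$ together with $|q|\le\epsilon$. Everything else is a bookkeeping computation carried out by the translation trick against the functional equation for $\rho_{q,r}$.
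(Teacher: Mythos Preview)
Your proof is correct and follows essentially the same approach as the paper: reduce to the one-variable integral of $\rho_{q,r}$, split the support into $I_{-q,r}\sqcup J_{q,r}\sqcup I_{q,r}$, evaluate the middle piece using property~(3), and combine the two outer pieces via the functional equation~(2). The only cosmetic difference is that you phrase the combination of the outer pieces as a change of variables $y=x+q$, whereas the paper computes the endpoint values directly; the content is the same.
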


\begin{proof}

By $\Supp(\rho_{q,r})\subset I_{-q,r} \sqcup J_{q,r} \sqcup I_{q,r}$,
\[\int_{P_\epsilon}H_{q,r}\omega_0=\int_{p((I_{-q,r}  \sqcup  J_{q,r}  \sqcup  I_{q,r})\times\RR)}H_{q,r}\omega_0
= - \int_{I_{-q,r}  \sqcup  J_{q,r}  \sqcup  I_{q,r}} \rho_{q,r}(x) dx .\]
Since $\rho_{q,r}(x)+\rho_{q,r}(q+x)=\frac{q}{|q|}$ for every $x\in I_{-q,r}$, we have that
\begin{align*}
      - \int_{I_{-q,r}  \sqcup  J_{q,r}  \sqcup  I_{q,r}} \rho_{q,r}(x) dx  &= - \int_{I_{-q,r}  \sqcup  I_{q,r}} \rho_{q,r}(x) dx - \int_{J_{q,r}} \rho_{q,r}(x) dx \\
      & = - r \cdot \frac{q}{|q|} - (|q|-r) \cdot \frac{q}{|q|} \\
      &=-q,
\end{align*}
 as desired. The same argument as one above with switching the two coordinates verifies that  $\int_{P_\epsilon}H'_{q,r}\omega_0=-q$.
\end{proof}

 The next proposition treats the values of certain commutators associated with $\sigma_{\qd}$, $\tau_{\qd}$, $\sigma'_{\qd}$, $\tau'_{\qd}$ by Py's Calabi quasimorphisms $\mu_P$. Recall the construction and properties of $\mu_P$ from Subsections~\ref{subsec=mu_P_const} and \ref{subsec=mu_P_property}.

\begin{prop} \label{prop=flux}
 Let $S$ be a closed connected orientable surface whose genus is at least two and $\omega$ a symplectic form on $S$. Assume that  $\iota\colon (P_{\ee},\omega_0)\to (S,\omega)$ is a symplectic embedding. Let $\mu_P\colon \Ham(S,\omega)\to \RR$ be the Calabi quasimorphism of Py.
Let $\qd =(a,b,c,d)\in \mathbb{R}^4$ satisfy $|c|\leq \ee$ and $|d|\leq \ee$.
Then the following hold:
\begin{enumerate}[$(1)$]
  \item $ \mu_P \left(\iota_{\ast}\left([\sigma_{\qd},\tau_{\qd}]\right)\right)=-bc$;
  \item $ \mu_P \left(\iota_{\ast}\left([\sigma^{\prime}_{\qd},\tau^{\prime -1}_{\qd}]\right)\right)=-ad$.
\end{enumerate}
\end{prop}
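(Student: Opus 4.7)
The plan is to realize each commutator as a Hamiltonian flow whose generating Hamiltonian is compactly supported in a single embedded annulus, and then to invoke the Calabi property of $\mu_P$ on annuli (Theorem~\ref{thm=mu_P}~(4)) together with the integration formula of Lemma~\ref{local ham integration}.

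For (1), if $c = 0$ then $\tau_{\qd} = \mathrm{id}$ and both sides vanish, so I assume $c \neq 0$. By Lemma~\ref{ham representation}~(1), $[\sigma_{\qd}, \tau_{\qd}] = \varphi_{H_{c,\Delta}}^{b}$ in $\Symp(P_{\ee}, \omega_0)$. Since $\Supp(\rho_{c,\Delta}) \subset (-\tfrac{|c|+\Delta}{2}, \tfrac{|c|+\Delta}{2})$ and $|c| + \Delta \le 2\ee < \tfrac{1}{2}$, the Hamiltonian $H_{c,\Delta}$ is compactly supported in the open set
\[
U := p\bigl((-\tfrac{|c|+\Delta}{2}, \tfrac{|c|+\Delta}{2}) \times \RR\bigr) \subset P_{\ee},
\]
which wraps around $P_\ee$ only in the $y$-direction and is therefore homeomorphic to the annulus. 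Since $\iota$ is an embedding, $\iota(U)$ is an open annulus in $S$, and hence has the Calabi property with respect to $\mu_P$ by Theorem~\ref{thm=mu_P}~(4). As $\iota_{\ast} \varphi_{H_{c,\Delta}}^{b} \in \Ham(\iota(U), \omega|_{\iota(U)}) \subset \Ham(S, \omega)$, the Calabi property together with the symplecticity of $\iota$ yields
\[
\mu_P\bigl(\iota_{\ast}[\sigma_{\qd}, \tau_{\qd}]\bigr) = \mathrm{Cal}_{\iota(U)}\bigl(\iota_{\ast}\varphi_{H_{c,\Delta}}^{b}\bigr) = b \int_U H_{c,\Delta}\,\omega_0 = -bc,
\]
where the last equality is Lemma~\ref{local ham integration}.

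Part (2) proceeds in an entirely parallel fashion: assuming $d \neq 0$, Lemma~\ref{ham representation}~(2) yields $[\sigma^{\prime}_{\qd}, \tau^{\prime -1}_{\qd}] = \varphi_{H'_{d,\Delta}}^{a}$, whose Hamiltonian is compactly supported in the annulus $V := p(\RR \times (-\tfrac{|d|+\Delta}{2}, \tfrac{|d|+\Delta}{2}))$ wrapping around $P_\ee$ in the $x$-direction; applying the Calabi property on $\iota(V)$ together with Lemma~\ref{local ham integration} produces $-ad$. The main subtlety to watch, in both cases, is that one must use the connected annulus $U$ (resp.\ $V$) on which the Hamiltonian is compactly supported, and not the two disjoint substrips $p(I_{\pm c, \Delta} \times \RR)$ on which the diffeomorphism $\varphi_{H_{c,\Delta}}^{b}$ actually moves points: on a single substrip, $H_{c,\Delta}$ approaches the nonzero boundary value $-c/|c|$, so no compactly supported representative exists there and the Calabi computation cannot be carried out at that finer scale.
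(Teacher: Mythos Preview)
Your proof is correct and follows essentially the same approach as the paper's: reduce the commutator to $\varphi_{H_{c,\Delta}}^b$ via Lemma~\ref{ham representation}, observe that its Hamiltonian is supported in an annulus so that Theorem~\ref{thm=mu_P}~(4) applies, and finish with Lemma~\ref{local ham integration}. Your explicit description of the annulus $U$ and your closing remark about why one must work on the full annulus rather than on the two substrips are welcome clarifications, but they do not alter the strategy.
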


 For the proof of Proposition~\ref{prop=flux}, Theorem \ref{thm=mu_P}~(4) plays the key role: it asserts that an open subset $U$ of $S$ homeomorphic to the annulus has the Calabi property with respect to $\mu_P$.

\begin{proof}[Proof of Proposition~$\ref{prop=flux}$]
  First, if $c=0$, we have that $[\sigma_{\qd},\tau_{\qd}]=\mathrm{id}$.
  Hence $ \mu_P  \left(\iota_{\ast}\left([\sigma_{\qd},\tau_{\qd}]\right)\right)=-bc=0$.
   In what follows,  we may assume that $c\neq0$.
 Observe that  there exists an open subset $U$ of $P_{\ee}$ such that $\Supp(H_{c,\Delta})\subset U$ and that $U$ is homeomorphic to the annulus.  By Theorem \ref{thm=mu_P}~(4), $\iota(U)$ has the Calabi property with respect to $\mu_P$. Therefore, Lemma \ref{ham representation} implies that
  \[  \mu_P \left(\iota_\ast\left([\sigma_{\qd},\tau_{\qd}]\right)\right)= \mu_P \left(\iota_\ast\left(\varphi_{H_{c,\Delta}}^b\right)\right)=\int_{\iota(P_\epsilon)}\iota_\ast\left(bH_{c,\Delta}\right)\omega=\int_{P_\epsilon}bH_{c,\Delta}\omega_0.
  \]
  Hence, by Lemma \ref{local ham integration}, we have that
  \[ \mu_P \left(\iota_\ast\left([\sigma_{\qd},\tau_{\qd}]\right)\right) = b\int_{P_\epsilon}H_{c,\Delta}\omega_0 = -bc. \]
  Hence (1) follows. We may obtain (2) in a similar way.
\end{proof}

\section{Proofs of Theorem \ref{thm A} and the Main Theorem}\label{section=proof_mainthm}

Recall that $S$ is a closed orientable surface whose genus $l$ is at least two. We take curves $\alpha_1,\beta_1,\alpha_2, \beta_2, \ldots,\alpha_\nn,\beta_\nn \colon [0,1] \to S$ on $S$ as depicted in Figure~\ref{curves}.
Let $[\alpha_1]^\ast,[\beta_1]^\ast,[\alpha_2]^\ast, [\beta_2]^\ast, \ldots,[\alpha_\nn]^\ast, [\beta_\nn]^\ast \in H^1(S ; \RR)$ be the dual basis of $[\alpha_1], [\beta_1],[\alpha_2], [\beta_2], \ldots, [\alpha_\nn], [\beta_\nn] \in H_1(S ; \RR)$.
\begin{figure}[htbp]
\centering
\includegraphics[width=7truecm]{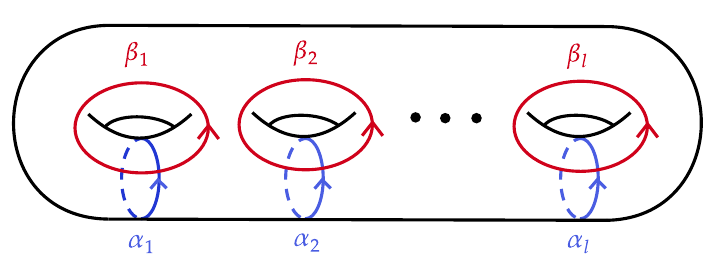}
\caption{$\alpha_1,\beta_1, \alpha_2, \beta_2, \ldots,\alpha_\nn,\beta_\nn\colon[0,1]\to S$}
\label{curves}
\end{figure}

 Recall that $ \bb \colon H^1(S;\RR)\times H^1(S;\RR)\to \RR$ denotes the intersection form:
let $v,w\in H^1(S;\RR)$. Express $v$ and $w$ as $v=\sum_{j=1}^{\nn}(a_j[\alpha_j]^\ast+b_j[\beta_j]^\ast)$  ($a_j,b_j\in \RR$)
and $w=\sum_{j=1}^{\nn}(c_j[\alpha_j]^\ast+d_j[\beta_j]^\ast)$  ($c_j,d_j\in \RR$), respectively. Then, the intersection number $\bb(v,w)$ equals
\[
 \bb(v,w)=\sum_{j=1}^{\nn}(a_jd_j-b_jc_j).
\]


Here we restate Theorem~\ref{main thm} for the convenience of the reader:
\begin{thm}[Restatement of Theorem~$\ref{main thm}$]\label{thm=main}
 Let $S$ be a closed orientable surface whose genus $\nn$ is at least two and  $\omega$ a symplectic form on $S$. Assume that $f,g \in \Symp_0(S,\omega)$ satisfy $fg = gf$. Then the cup product $\flux_\omega(f) \smile \flux_\omega(g)$ equals $0$.
\end{thm}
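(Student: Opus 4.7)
The plan is to deduce Theorem~\ref{thm=main} from the non-extendability result Theorem~\ref{thm A} for Py's Calabi quasimorphism $\mu_P$, combined with the discrete-group extension theorem Proposition~\ref{prop=discrete}, via a short argument by contradiction. If either $\flux_\omega(f)$ or $\flux_\omega(g)$ vanishes the cup product is zero automatically, so we may assume $\bar{v}:=\flux_\omega(f)$ and $\bar{w}:=\flux_\omega(g)$ are both nonzero, and suppose toward a contradiction that $\bar{v}\smile\bar{w}\neq 0$; in particular $\bar{v}$ and $\bar{w}$ are $\RR$-linearly independent in $H^1(S;\RR)$.

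For each positive integer $k$, set $\Lambda_k=\langle\bar{v},\bar{w}/k\rangle$ and $G_{\Lambda_k}=\flux_\omega^{-1}(\Lambda_k)\subset\Symp_0(S,\omega)$, exactly as in the hypothesis of Theorem~\ref{thm A}. Then $\Lambda_k$ is free abelian of rank two, and the subgroup $\Lambda'_k:=\bar{v}\ZZ\oplus\bar{w}\ZZ$ has index $k$ in $\Lambda_k$. The commutativity of $f$ and $g$ ensures that the map
\[
s\colon \Lambda'_k\to G_{\Lambda_k},\qquad s(m\bar{v}+n\bar{w})=f^m g^n,
\]
is a well-defined group homomorphism satisfying $\flux_\omega\circ s=\mathrm{id}_{\Lambda'_k}$. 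Consequently, the short exact sequence
\[
1\to\Ham(S,\omega)\to G_{\Lambda_k}\xrightarrow{\flux_\omega}\Lambda_k\to 1
\]
virtually splits via the finite-index subgroup $\Lambda'_k$ and its lift $s$. Because $\mu_P$ is $\Symp_0(S,\omega)$-invariant by Theorem~\ref{thm=mu_P}~(1) and $G_{\Lambda_k}\subset\Symp_0(S,\omega)$, Proposition~\ref{prop=discrete} then produces a homogeneous quasimorphism on $G_{\Lambda_k}$ extending $\mu_P$.

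This conclusion contradicts Theorem~\ref{thm A}, which supplies some $k_0$ such that $\mu_P$ admits no such extension whenever $k\geq k_0$. Taking any $k\geq k_0$ therefore completes the proof. The present deduction is essentially formal, the only structural input being that commutativity of $f$ and $g$ is exactly what makes $s$ a \emph{homomorphism} rather than a mere set-theoretic section; the substantive difficulty is concentrated in Theorem~\ref{thm A} itself, where the symplectomorphisms constructed in Section~\ref{section=fluxlift} together with the Calabi-property calculation of Proposition~\ref{prop=flux} are used to detect an obstruction to $\mu_P$ that grows with $k$, while any hypothetical extension of $\mu_P$ would have defect bounded uniformly in $k$.
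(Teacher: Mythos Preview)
Your proof is correct and follows essentially the same argument as the paper: assume $\bar v\smile\bar w\neq 0$, observe that the commutativity of $f$ and $g$ yields a homomorphic section of $\flux_\omega$ on the finite-index subgroup $\langle\bar v,\bar w\rangle\subset\Lambda_k$, apply Proposition~\ref{prop=discrete} to extend $\mu_P$ to $G_{\Lambda_k}$, and contradict Theorem~\ref{thm A} for $k\geq k_0$. Your explicit handling of the trivial case and the linear-independence remark (needed so that $s$ is well defined on $\bar v\ZZ\oplus\bar w\ZZ$) are welcome clarifications that the paper leaves implicit.
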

Recall from Section~\ref{intro section} the strategy of the proof of Theorem~\ref{main thm}. We will demonstrate Theorem~\ref{main thm} by combining two results on the extendability of Py's Calabi quasimorphism $\mu_P$; one in the \emph{affirmative} and the other in the \emph{negative}.
The former may be seen as an extension theorem for discrete groups, which is proved by the authors in \cite[Proposition 1.6]{KKMM20}. Recall from Subsection~\ref{subsec=qm} (Definition~\ref{defn=inv_qm}) the relevant definitions.

\begin{prop}[Extension theorem for discrete groups]\label{prop=discrete}
Assume that the short exact sequence $1\to \bG \to \hG \xrightarrow{q} Q \to 1$ of groups \emph{virtually splits}. Namely, there exists a subgroup $Q_0$ of $Q$ of finite index and a group homomorphism $s_0\colon Q_0\to \hG$ such that $q\circ s_0=\mathrm{id}_{Q_0}$. Then, for every $\hG$-quasi-invariant quasimorphism $\phi$ on $\bG$, there exists a quasimorphism $\hat{\phi}$ on $\hG$ such that $\hat{\phi}|_\bG = \phi$ and $D(\hat{\phi}) \le D(\phi) + 3 D'(\phi)$.
\end{prop}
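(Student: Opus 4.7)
The plan is to first establish the \emph{split case} $Q_0=Q$ and then reduce the virtually-split case to an extension problem across a finite-index subgroup.

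In the split case, a genuine section $s\colon Q\to\hG$ of $q$ is at hand. Since $\bG$ is normal in $\hG$, every $x\in\hG$ admits a unique factorisation $x=\gamma(x)\,s(q(x))$ with $\gamma(x)\in\bG$. Define $\hat{\phi}(x):=\phi(\gamma(x))$. Because $s(e_Q)=e_{\hG}$, the map $\gamma$ restricts to the identity on $\bG$, so $\hat{\phi}|_{\bG}=\phi$. A direct calculation gives
\[
\gamma(xy)=\gamma(x)\cdot\bigl(s(q(x))\,\gamma(y)\,s(q(x))^{-1}\bigr),
\]
and combining the quasimorphism property of $\phi$ (cost $D(\phi)$) with one application of $\hG$-quasi-invariance (cost $D'(\phi)$) yields $D(\hat{\phi})\le D(\phi)+D'(\phi)$ in this case.

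For the general virtually-split case, set $\hG_0:=q^{-1}(Q_0)$; it is a finite-index subgroup of $\hG$ containing $\bG$, and the short exact sequence $1\to\bG\to\hG_0\to Q_0\to 1$ splits via $s_0$. The split case produces an extension $\tilde{\phi}\colon\hG_0\to\RR$ of $\phi$ with $D(\tilde{\phi})\le D(\phi)+D'(\phi)$, and the explicit formula shows that $\tilde{\phi}$ remains $\hG$-quasi-invariant with a constant controlled by $D'(\phi)$. To extend $\tilde{\phi}$ across the finite-index inclusion $\hG_0\subset\hG$, choose a set-theoretic transversal $T(1)=e_{\hG},T(2),\ldots,T(n)$ of $\hG/\hG_0$ with $n=[\hG:\hG_0]=[Q:Q_0]$. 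Every $x\in\hG$ then decomposes uniquely as $x=\eta(x)\,T(i(x))$ with $\eta(x)\in\hG_0$; set $\hat{\phi}(x):=\tilde{\phi}(\eta(x))$. By the choice $T(1)=e_{\hG}$, we obtain $\hat{\phi}|_{\hG_0}=\tilde{\phi}$, and in particular $\hat{\phi}|_{\bG}=\phi$ exactly.

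The main obstacle will be verifying the sharp defect bound $D(\hat{\phi})\le D(\phi)+3D'(\phi)$ for this last extension. A direct calculation gives
\[
\eta(xy)=\eta(x)\cdot\bigl(T(i(x))\,\eta(y)\,T(i(x))^{-1}\bigr)\cdot c(i(x),i(y)),
\]
where $c(i,j):=T(i)\,T(j)\,T(k(i,j))^{-1}\in\hG_0$ is the associated transversal cocycle. The conjugation factor contributes one instance of quasi-invariance (cost $D'(\tilde{\phi})$, itself controlled by $D'(\phi)$), but the cocycle term $\tilde{\phi}(c(i,j))$ is a priori only bounded in terms of the finite set of coset representatives. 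The delicate point is to refine the transversal $T$ using the normal-core reduction (replace $Q_0$ by $\bigcap_{g\in Q}gQ_0g^{-1}$ if necessary, so that $Q_0$ becomes normal in $Q$ and hence $\hG_0$ normal in $\hG$, and then choose $T$ compatibly with $s_0$) so that the cocycle contribution is absorbed into a further application of $\hG$-quasi-invariance rather than appearing as an independent constant. Two such absorptions added to the $D(\phi)+D'(\phi)$ inherited from the split case produce the announced bound $D(\hat{\phi})\le D(\phi)+3D'(\phi)$.
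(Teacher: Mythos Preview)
Your split-case argument is fine and gives the correct bound $D(\phi)+D'(\phi)$ there. The problem is the second step. The cocycle values $\tilde{\phi}(c(i,j))$ with $c(i,j)=T(i)T(j)T(k(i,j))^{-1}$ are simply values of $\tilde{\phi}$ at finitely many fixed elements of $\hG_0$; they are bounded, but by a constant that depends on the particular transversal and on $\phi$, not on $D(\phi)$ and $D'(\phi)$. Passing to the normal core does not help: even when $Q_0$ is normal, the extension $1\to\hG_0\to\hG\to Q/Q_0\to 1$ need not split (take $G=0$, $\hG=Q=\ZZ$, $Q_0=2\ZZ$), so no choice of transversal makes the cocycle trivial, and there is no mechanism that turns $\tilde{\phi}(c(i,j))$ into an instance of quasi-invariance. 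Worse, even if the cocycle were trivial, your decomposition already costs $2D(\tilde{\phi})+D'(\tilde{\phi})\ge 2D(\phi)+2D'(\phi)$, which exceeds the target bound. Your last paragraph is therefore not a proof sketch but a statement of hope; the two-step scheme produces \emph{an} extension, but not one with defect $\le D(\phi)+3D'(\phi)$.

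The argument that actually gives the stated bound (this is the content of \cite[Proposition~1.6]{KKMM20}, and is reproduced in the paper as the discrete specialization of Theorem~\ref{extension theorem A}) is an \emph{averaging} construction that bypasses the cocycle entirely. One builds a single set-theoretic section $s\colon Q\to\hG$ satisfying $s(\lambda x)=s_0(\lambda)s(x)$ for $\lambda\in Q_0$, using $s_0$ on $Q_0$ and arbitrary lifts on a transversal $B$ of $Q_0\backslash Q$. Then one sets
\[
\hat{\phi}(\hg)=\frac{1}{|B|}\sum_{b\in B}\phi\bigl(\hg\cdot s(b\,q(\hg))^{-1}s(b)\bigr).
\]
The summand is left $Q_0$-invariant in $b$, so in the defect calculation the change of variable $b\mapsto b\,q(\hg_1)$ (which permutes $B$ modulo $Q_0$) makes the second sum exactly equal to $\hat{\phi}(\hg_2)$; the cocycle never appears as a separate term. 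Following the computation at the end of Subsection~\ref{subsec=extension_A} with the integral replaced by the finite average yields precisely $D(\hat{\phi})\le D(\phi)+3D'(\phi)$. The averaging, not a clever choice of transversal, is what absorbs the obstruction you identified.
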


\begin{remark}\label{remark=ext_homog}
By Remark~\ref{remark=homog}, which employs the homogenization (Lemma~\ref{lem=homog}), Proposition~\ref{prop=discrete} implies the following: assume that $(\hG,\bG,Q=\hG/\bG)$ satisfies the assumption of Proposition~\ref{prop=discrete}.
If $\phi$ is a $\hG$-invariant homogeneous quasimorphism, then there exists a \emph{homogeneous} quasimorphism $\psi$ on $\hG$ such that $\psi|_\bG = \phi$ and $D(\psi) \le 2D(\phi)$.  Here, we note that $D'(\phi)=0$ by  Lemma~\ref{lem:qm}~(1).
\end{remark}

The latter is a precise version of Theorem~\ref{thm A}, which proves the \emph{non-}extendability of $\mu_P$. Let $S$ be a closed orientable surface whose genus $\nn$ is at least two. Take $2\nn$ curves $\alpha_1,\beta_1,\ldots ,\alpha_{\nn},\beta_{\nn}$ as in Figure~\ref{curves}.
Let $[\alpha_1]^\ast,[\beta_1]^\ast,\ldots,[\alpha_\nn]^\ast,[\beta_\nn]^\ast \in H^1(S ; \RR)$ be the dual basis of $[\alpha_1], [\beta_1],\ldots, [\alpha_\nn],[\beta_\nn] \in H_1(S ; \RR)$.
In the present paper, for $w\in H^1(S;\RR)$, we use the symbol $|w|_{\max}$ as
\[
|w|_{\max}:=\max\{|c_j|,|d_j|\, \mid\, j = 1, \dots, \nn \},
\]
where $w=\sum_{j=1}^{\nn}(c_j[\alpha_j]^{\ast}+d_j[\beta_j]^{\ast})$.

\begin{thm}[Precise version of Theorem~\ref{thm A} with explicit $k_0$]\label{thm=non_ext}
Let $S$ be a closed connected orientable surface whose genus $\nn$ is at least two and $\omega$ a symplectic form on $S$.
Let $\bar{v}$, $\bar{w} \in H^1(S;\RR)$ with $\bar{v} \smile \bar{w}  \neq 0$. For a positive integer $k$, set $\Lambda_k = \langle \bar{v}, \bar{w} / k\rangle$ and $G_{\Lambda_k} = \flux_\omega^{-1}(\Lambda_k)$.
Let $k_0= 8 \nn \max\{ 1, \left\lceil\frac{|\bar{w}|_{\max}}{{\rm area}(S)} \right\rceil  \} $,
 where $\lceil \cdot\rceil$  is the ceiling function.

Then, for every $k \ge k_0$, Py's Calabi quasimorphism $\mu_P$ is \emph{not} extendable to $G_{\Lambda_k}$; namely, for every $k\geq k_0$, there does \emph{not} exist a homogeneous quasimorphism $\mu_P^{\Lambda_k}$ on $G_{\Lambda_k}$ such that $\mu_P^{\Lambda_k}|_{\Ham(S,\omega)}=\mu_P$.
\end{thm}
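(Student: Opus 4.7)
The plan is to derive a contradiction from the assumption that a homogeneous quasimorphism $\hat{\mu}$ on $G_{\Lambda_k}$ extends $\mu_P$. I will exhibit a pair $\sigma,\tau\in G_{\Lambda_k}$ satisfying (a) $\sigma$ commutes with $\tau\sigma^{-1}\tau^{-1}$ and (b) $\mu_P([\sigma,\tau])\neq 0$. The contradiction is immediate: by (a), the commutator factors as $[\sigma,\tau]=\sigma\cdot(\tau\sigma^{-1}\tau^{-1})$ into commuting elements of $G_{\Lambda_k}$, so Lemma~\ref{lem:qm} applied to $\hat{\mu}$ yields $\hat{\mu}([\sigma,\tau])=\hat{\mu}(\sigma)+\hat{\mu}(\tau\sigma^{-1}\tau^{-1})=\hat{\mu}(\sigma)-\hat{\mu}(\sigma)=0$; but $[\sigma,\tau]\in\Ham(S,\omega)$ and $\hat{\mu}|_{\Ham(S,\omega)}=\mu_P$, so (b) forces $\hat{\mu}([\sigma,\tau])\neq 0$.

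To build the pair I use the four local symplectomorphisms $\sigma_\qd,\sigma'_\qd,\tau_\qd,\tau'_\qd$ of Section~\ref{section=fluxlift}, pushed forward along $\nn$ pairwise disjoint symplectic embeddings $\iota_j\colon P_\epsilon\hookrightarrow S$, one in each handle, with $\iota_{j\ast}[\alpha]=[\alpha_j]$ and $\iota_{j\ast}[\beta]=[\beta_j]$. Since $\mathrm{area}(P_\epsilon)\le 8\epsilon$, packing $\nn$ disjoint copies of $P_\epsilon$ forces $\epsilon\le\mathrm{area}(S)/(8\nn)$. Expanding $\bar{v}=\sum_{j=1}^{\nn}(a_j^v[\alpha_j]^\ast+b_j^v[\beta_j]^\ast)$ and $\bar{w}=\sum_{j=1}^{\nn}(a_j^w[\alpha_j]^\ast+b_j^w[\beta_j]^\ast)$, I set $\qd_j=(a_j^v,\,b_j^v,\,a_j^w/k,\,-b_j^w/k)$ and define
\[
\sigma=\prod_{j=1}^{\nn}\iota_{j\ast}(\sigma_{\qd_j})\cdot\iota_{j\ast}(\sigma'_{\qd_j}),\qquad\tau=\prod_{j=1}^{\nn}\iota_{j\ast}(\tau_{\qd_j})\cdot\iota_{j\ast}(\tau^{\prime -1}_{\qd_j}).
\]
By Lemma~\ref{local flux calculation}, $\flux_\omega(\sigma)=\bar{v}$ and $\flux_\omega(\tau)=\bar{w}/k$, both in $\Lambda_k$, so $\sigma,\tau\in G_{\Lambda_k}$.

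For (a), cross-handle pairs commute by disjointness. Within a handle the required commutation splits into the two subpairs $(\sigma_{\qd_j},\tau_{\qd_j})$ and $(\sigma'_{\qd_j},\tau^{\prime -1}_{\qd_j})$: Lemma~\ref{lem=sigmatau}(1) gives $\sigma_{\qd_j}$ commuting with $\tau_{\qd_j}\sigma_{\qd_j}^{-1}\tau_{\qd_j}^{-1}$, Lemma~\ref{lem=sigmatau}(2) gives the analogous relation for the primed subpair, and the cross-commutations $\sigma_{\qd_j}\leftrightarrow\tau'_{\qd_j}$ and $\sigma'_{\qd_j}\leftrightarrow\tau_{\qd_j}$ of Lemma~\ref{lem=sigmatau}(3) let the two subpairs be disentangled. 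For (b), applying Proposition~\ref{prop=flux}(1) to the $\sigma$-$\tau$ contribution in each handle and Proposition~\ref{prop=flux}(2) to the $\sigma'$-$\tau^{\prime-1}$ contribution, and summing via additivity on commuting pieces (Lemma~\ref{lem:qm}(2)),
\[
\mu_P([\sigma,\tau])=\sum_{j=1}^{\nn}\left(-b_j^v\cdot\frac{a_j^w}{k}+a_j^v\cdot\frac{b_j^w}{k}\right)=\frac{1}{k}\sum_{j=1}^{\nn}(a_j^vb_j^w-b_j^va_j^w)=\frac{1}{k}\,\bar{v}\smile\bar{w}\neq 0.
\]
The parameter bounds $|a_j^w|/k,|b_j^w|/k\le\epsilon$ become $k\ge|\bar{w}|_{\max}/\epsilon$; combined with $\epsilon\le\mathrm{area}(S)/(8\nn)$, this reproduces $k_0=8\nn\max\{1,\lceil|\bar{w}|_{\max}/\mathrm{area}(S)\rceil\}$.

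I expect the main obstacle to be the clean verification of (a) when both subpairs share the same $P_\epsilon$: the supports of $\sigma_{\qd_j}$ and $\sigma'_{\qd_j}$ overlap in the intersection of vertical and horizontal strips, so although the cross-commutations of Lemma~\ref{lem=sigmatau}(3) keep the two subpair commutators from interfering, the explicit algebraic unwinding of $\sigma\cdot(\tau\sigma^{-1}\tau^{-1})$ as a product of commuting factors requires care. A less efficient but transparent alternative places the two subpairs in disjoint sub-tori of each handle; then (a) follows trivially from (1) and (2) of Lemma~\ref{lem=sigmatau}, at the cost of a factor $2$ in the area budget, and tighter packings must be invoked to recover the claimed constant $8\nn$.
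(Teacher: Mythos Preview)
Your claim~(a) is not merely hard to verify --- it is false for the construction you give. Write $s=\sigma_{\qd_j}$, $s'=\sigma'_{\qd_j}$, $t=\tau_{\qd_j}$, $t'=\tau'_{\qd_j}$. Using Lemma~\ref{lem=sigmatau}(3) one finds $\tau_j\sigma_j^{-1}\tau_j^{-1}=B\cdot A$ with $A=ts^{-1}t^{-1}$ supported in the \emph{right} vertical strip $p(I_{c,\Delta}\times\RR)$ and $B=t'^{-1}s'^{-1}t'$ supported in the \emph{top} horizontal strip $p(\RR\times I_{d,\Delta})$. But $\sigma_j=ss'$ is supported in the left vertical and bottom horizontal strips. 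On the top-left square $I_{-c,\Delta}\times I_{d,\Delta}$, $ss'$ acts as the vertical shear $s\colon(x,y)\mapsto(x,y+h(x))$ and $BA$ acts as the horizontal shear $B\colon(x,y)\mapsto(x+g(y),y)$; these do not commute. The same failure on the overlap also blocks your additivity computation for~(b): $[\sigma_j,\tau_j]$ does not split as a product of $[s,t]$ and $[s',t'^{-1}]$ in commuting factors. Your disjoint-sub-tori alternative does repair both (a) and (b), but only at the cost of doubling the area budget, hence doubling $k_0$; you do not recover the stated constant this way.

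The paper avoids this obstruction entirely by \emph{not} asking $\sigma$ to commute with $\tau\sigma^{-1}\tau^{-1}$. Instead it keeps the $\alpha$- and $\beta$-directions separate: with $\check g_\alpha=\prod_j\iota^j_\ast(\tau_{\qd_j})$ and $\check g_\beta=\prod_j\iota^j_\ast(\tau'_{\qd_j})$, \emph{neither} lies in $G_{\Lambda_k}$ but their product does. The key step (Proposition~\ref{prop=ext_estimate}) uses only conjugation-invariance of a homogeneous $\hat\phi$ by the element $\check g_\alpha\check g_\beta\in G_{\Lambda_k}$ to show $\hat\phi(\check g_\alpha\check f_m^{-1}\check g_\alpha^{-1})=\hat\phi(\check g_\beta^{-1}\check f_m^{-1}\check g_\beta)$, whence $\hat\phi(\gamma_m)\sim_{3D(\hat\phi)}0$ for $\gamma_m=[\check f_m,\check g_\alpha][\check f_m,\check g_\beta^{-1}]^{-1}$. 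Each commutator $[\check f_m,\check g_\alpha]$, $[\check f_m,\check g_\beta^{-1}]$ involves only one of $\tau,\tau'$, so Lemma~\ref{lem=sigmatau} suffices to compute $\mu_P$ exactly (Proposition~\ref{prop=intersection}); the parameter $m\to\infty$ then converts the approximate vanishing into exact vanishing, yielding the contradiction with the sharp $k_0$.
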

 For the case where $\mathrm{area}(S)> 1$, we in fact obtain a better bound on $k_0$; see Remark~\ref{remark=normalize}.

 In Subsection~\ref{subsection=proof_main}, we
deduce Theorem~\ref{main thm} from Proposition~\ref{prop=discrete} and Theorem~\ref{thm=non_ext}; it is almost straightforward. Since Proposition~\ref{prop=discrete} is already proved in \cite{KKMM20}, Theorem~\ref{thm=non_ext} may be seen as the essential part of Theorem~\ref{main thm}.  Subsection~\ref{subsection=proof_non_ext}  is devoted to the proof of Theorem~\ref{thm=non_ext} (precise version of Theorem~\ref{thm A}).

\subsection{Proof of Theorem~\ref{thm A} }\label{subsection=proof_non_ext}
 In this subsection, we prove Theorem~\ref{thm=non_ext} (precise version of Theorem~\ref{thm A}). Throughout this subsection, we assume the setting of Theorem~\ref{thm=non_ext}.
Set $\epsilon =  \frac1{8\nn} \min\{ 1 , {\rm area}(S) \} $.
Then, $0< \ee<\frac14 $ and ${\rm area}(P_{\ee}) \cdot \nn < 8 \ee \nn \leq {\rm area}(S)$.
Hence, there exist symplectic embeddings $\iota^1,\ldots,\iota^\nn\colon$ $(P_\epsilon,\omega_0)\to (S,\omega)$ satisfying the following conditions:
\begin{enumerate}[(1)]
  \item $\iota^{j} \circ \alpha = \alpha_{j}$, $\iota^{ j } \circ \beta = \beta_{ j }$ for $j = 1,\ldots,\nn$.
  \item $\iota^i(P_\epsilon)\cap\iota^j(P_\epsilon)=\emptyset$ if $i\neq j$.
\end{enumerate}
We set $v=\bar{v}$ and $w=\bar{w}/k$ and represent them as
\begin{equation}\label{eq:vw}
  v = \sum_{j=1}^{\nn}(a_j[\alpha_j]^{\ast}+b_j[\beta_j]^{\ast})
 \quad \text{and} \quad w = \sum_{j=1}^{\nn}(c_j[\alpha_j]^{\ast}+d_j[\beta_j]^{\ast}).
\end{equation}
Set $\qd_j = (a_j,b_j,c_j,d_j)$ for $j =1 ,\dots, \nn$.
Then, the assumption $k \geq k_0$ implies that $|c_j|\leq \ee$ and $|d_j|\leq \ee$ for $j=1,\dots,\nn$ since $ |w|_{\max}/k\leq |w|_{\max}/k_0 \leq \ee$.
Hence, we can construct $\sigma_{\qd_j}$, $\tau_{\qd_j}$, $\sigma'_{\qd_j}$, and $\tau'_{\qd_j}$ defined in Subsection \ref{subsection=qd}.

To prove Theorem~\ref{thm=non_ext}, we  first show the following Lemma~\ref{lemma=haitteru}, Proposition~\ref{prop=ext_estimate} and Proposition~\ref{prop=intersection}.
For $m \in \NN$, we define $\fk_m, \gk, \hk \in \Symp_0(S,\omega)$ by
\begin{equation}\label{eq:fg}
  \fk_m = \prod_{j=1}^l \iota^j_{\ast}(\sigma_{\qd_j}^m \sigma_{\qd_j}^{\prime m}), \quad
 \gk = \prod_{j=1}^l \iota^j_{\ast}(\tau_{\qd_j}),  \quad \text{and} \quad
 \hk = \prod_{j=1}^l \iota^j_{\ast}(\tau'_{\qd_j});
\end{equation}
 recall the symbol $\prod$ from the end of Section~\ref{intro section}.

\begin{lem}\label{lemma=haitteru}
 Let $\fk_m$, $\gk$, and $\hk$ be as in \eqref{eq:fg} and $G_{\Lambda_k}$ as in the setting of Theorem $\ref{thm=non_ext}$.
Then the following hold.
\begin{enumerate}[$(1)$]
 \item For every $m\in \NN$, $\fk_m \in G_{\Lambda_k}$.
 \item $\gk \hk \in G_{\Lambda_k}$.
\end{enumerate}
\end{lem}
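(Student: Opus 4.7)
The plan is to verify Lemma~\ref{lemma=haitteru} by a direct computation of $\flux_\omega$ on each building block in the definitions of $\fk_m$, $\gk$, and $\hk$, and then to assemble the result using the homomorphism property of $\flux_\omega$.

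The key ingredient is a transfer statement: for every $\psi \in \Symp_0(P_\ee,\omega_0)$ with $\widetilde{\flux}_{\omega_0}(\psi) = \lambda[\alpha]^\ast + \mu[\beta]^\ast$, one has $\flux_\omega(\iota^j_\ast \psi) = \lambda[\alpha_j]^\ast + \mu[\beta_j]^\ast$. To justify this, represent the flux class by $\iota_X\omega$ for a vector field $X$ supported in $\iota^j(P_\ee)$ and pair against the basis $[\alpha_i], [\beta_i]$ of $H_1(S;\RR)$. For $i \neq j$ the pairing vanishes because $\alpha_i$ and $\beta_i$ can be chosen to avoid $\iota^j(P_\ee)$ by the disjointness condition $\iota^i(P_\ee)\cap \iota^j(P_\ee)=\emptyset$, while for $i = j$ the pairing reduces, via $\iota^j \circ \alpha = \alpha_j$ and $\iota^j \circ \beta = \beta_j$, to the local pairing computed in Lemma~\ref{local flux calculation}.

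Applying this transfer to the four generators (and using homogeneity of $\flux_\omega$ for the $m$-th powers) yields
\[
\flux_\omega(\iota^j_\ast \sigma_{\qd_j}^{m}) = m b_j[\beta_j]^\ast, \qquad \flux_\omega(\iota^j_\ast \sigma_{\qd_j}^{\prime m}) = m a_j[\alpha_j]^\ast,
\]
\[
\flux_\omega(\iota^j_\ast \tau_{\qd_j}) = c_j[\alpha_j]^\ast, \qquad \flux_\omega(\iota^j_\ast \tau'_{\qd_j}) = d_j[\beta_j]^\ast.
\]
Since the factors constituting each of $\fk_m$, $\gk$, $\hk$, and the product $\gk\hk$ have pairwise disjoint supports inside distinct $\iota^j(P_\ee)$'s, they pairwise commute and $\flux_\omega$ is additive on their product. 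Summing over $j$ therefore gives
\[
\flux_\omega(\fk_m) = m\sum_{j=1}^{\nn}\bigl(a_j[\alpha_j]^\ast + b_j[\beta_j]^\ast\bigr) = m\bar v \in \Lambda_k,
\]
proving (1), and
\[
\flux_\omega(\gk\hk) = \sum_{j=1}^{\nn}\bigl(c_j[\alpha_j]^\ast + d_j[\beta_j]^\ast\bigr) = w = \bar w/k \in \Lambda_k,
\]
proving (2).

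The only step requiring care is the naturality of $\flux_\omega$ under the symplectic embeddings $\iota^j$; everything afterwards is a bookkeeping of the coefficients $(a_j,b_j,c_j,d_j)$ against the dual basis together with the recognition that $m\bar v$ and $\bar w/k$ lie in $\Lambda_k = \langle \bar v, \bar w/k\rangle$.
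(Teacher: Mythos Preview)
Your proof is correct and follows essentially the same approach as the paper: compute the flux of each local building block via Lemma~\ref{local flux calculation}, transfer it to $S$ through the embeddings $\iota^j$, and sum using that $\flux_\omega$ is a homomorphism. One minor remark: your appeal to pairwise disjoint supports and commutativity is unnecessary, since $\flux_\omega$ is a group homomorphism into the abelian group $H^1(S;\RR)$ and is therefore automatically additive on arbitrary products; in particular the decomposition $\flux_\omega\bigl(\iota^j_\ast(\sigma_{\qd_j}^m\sigma_{\qd_j}^{\prime m})\bigr) = \flux_\omega(\iota^j_\ast\sigma_{\qd_j}^m) + \flux_\omega(\iota^j_\ast\sigma_{\qd_j}^{\prime m})$ holds even though these two factors lie in the same $\iota^j(P_\ee)$.
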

\begin{proof}

We note that for every $\phi \in \Symp_0(P_\epsilon,\omega)$ and every symplectic embedding $\iota \colon (P_\epsilon,\omega_0) \to (S,\omega)$,
\[ \flux_\omega(\iota_\ast\phi) =
 \iota_{\ast} \left( \flux_{\omega_0}(\phi) \right) ;  \]
 recall from Subsection~\ref{subsec=symp} that $\Gamma_\omega=\{0\}$.

For  every $m\in \NN$,
\begin{align*}
\flux_{\omega}(\fk_m)&=\sum_{j= 1 }^l\flux_{\omega}(\iota^j_{\ast}(\sigma_{\qd_j}^m \sigma_{\qd_j}^{\prime m}))\\
&=m\left\{\sum_{j= 1 }^l\flux_{\omega}(\iota^j_{\ast}(\sigma_{\qd_j}))+\sum_{j= 1 }^l\flux_{\omega}(\iota^j_{\ast}(\sigma'_{\qd_j}))\right\}.
\end{align*}
By Lemma~\ref{local flux calculation}, we have that
\[
\flux_{\omega}(\iota^j_{\ast}(\sigma_{\qd_j}))=b_j[\beta_j]^{\ast},\quad \flux_{\omega}(\iota^j_{\ast}(\sigma'_{\qd_j}))=a_j[\alpha_j]^{\ast}
\]
for every $1\leq j\leq l$. Here recall that $\iota^{j}\circ \alpha=\alpha_{j}$ and $\iota^{j}\circ \beta=\beta_{j}$. Hence, we conclude that
\[
\flux_{\omega}(\fk_m)=mv \in \Lambda_k,
\]
which proves (1). In a  manner similar  to one above, we have that
\[
\flux_{\omega}(\gk \hk) = w \in \Lambda_k;
\]
hence we show (2).
\end{proof}

For $m \in \NN$, we set $\gamma_m \in \Symp_0(S,\omega)$ by
\begin{equation} \label{eq:gamma}
  \gamma_m = [\fk_m,\gk][\fk_m,\hk^{-1}]^{-1}.
\end{equation}

\begin{prop}\label{prop=ext_estimate}
 Let $\gamma_m$ be as in \eqref{eq:gamma} and $G_{\Lambda_k}$ as in the setting of Theorem $\ref{thm=non_ext}$.
Let $\hat{\phi}\colon G_{\Lambda_k}\to \RR$ be a homogeneous quasimorphism on $G_{\Lambda_k}$. Then for every $m\in \NN$,
\[
\hat{\phi}(\gamma_m)\sim_{3D(\hat{\phi})}0
\]
holds true.
\end{prop}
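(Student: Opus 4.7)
My plan is to rewrite $\gamma_m$ as a product of two single commutators whose four entries all lie in $G_{\Lambda_k}$, and then invoke the standard fact that a homogeneous quasimorphism is bounded on single commutators by its defect. This avoids any direct computation with $\mu_P$ or with the symplectic geometry of $S$: everything geometric has already been absorbed into Lemma~\ref{lemma=haitteru}.

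The key identity I would establish first is
\[
\gamma_m \;=\; [\fk_m,\gk\hk]\cdot \bigl[\gk\hk,\,[\fk_m,\hk^{-1}]\bigr].
\]
To derive it, apply the standard formula $[a,bc]=[a,b]\cdot b[a,c]b^{-1}$ with $a=\fk_m$, $b=\gk\hk$, $c=\hk^{-1}$ (so that $bc=\gk$). This yields
\[
[\fk_m,\gk] \;=\; [\fk_m,\gk\hk]\cdot (\gk\hk)\,[\fk_m,\hk^{-1}]\,(\gk\hk)^{-1}.
\]
Multiplying on the right by $[\fk_m,\hk^{-1}]^{-1}$ and recognizing the resulting tail as the commutator of $\gk\hk$ with $[\fk_m,\hk^{-1}]$ gives the identity. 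I would verify it once by a direct calculation and then proceed.

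Next I would check that both factors are commutators of pairs of elements of $G_{\Lambda_k}$. The pair $(\fk_m,\gk\hk)$ lies in $G_{\Lambda_k}\times G_{\Lambda_k}$ by Lemma~\ref{lemma=haitteru}. For the second factor, $\gk\hk\in G_{\Lambda_k}$ again by Lemma~\ref{lemma=haitteru}, while $[\fk_m,\hk^{-1}]\in\Ham(S,\omega)\subset G_{\Lambda_k}$ because $\flux_\omega$ is a group homomorphism into the \emph{abelian} group $H^1(S;\RR)$, so any commutator of two symplectomorphisms has zero flux. It is precisely at this step that the identity pays off: the factors $\gk$ and $\hk$ individually need \emph{not} be in $G_{\Lambda_k}$, but only the combination $\gk\hk$ appears in the rearranged expression.

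To conclude, I would invoke the elementary bound $|\hat{\phi}([x,y])|\leq D(\hat{\phi})$ valid for any homogeneous quasimorphism $\hat{\phi}$ on any group $G$ and any $x,y\in G$: using Lemma~\ref{lem:qm}(1),
\[
\hat{\phi}([x,y]) \;=\; \hat{\phi}\bigl((xyx^{-1})\,y^{-1}\bigr)\;\sim_{D(\hat{\phi})}\;\hat{\phi}(y)-\hat{\phi}(y)\;=\;0.
\]
Applying this to each of the two commutators in the identity and combining via one more application of the defect inequality gives $|\hat{\phi}(\gamma_m)|\leq 3D(\hat{\phi})$, which is exactly the assertion $\hat{\phi}(\gamma_m)\sim_{3D(\hat{\phi})}0$. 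The main obstacle is spotting the correct commutator identity; beyond that, the proof is purely formal manipulation of quasimorphisms, and it is precisely this formality that makes the estimate work for \emph{arbitrary} homogeneous quasimorphisms on $G_{\Lambda_k}$, which is what the proof of Theorem~\ref{thm=non_ext} will need.
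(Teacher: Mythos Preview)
Your proof is correct, and it reaches the same constant $3D(\hat{\phi})$ as the paper, but by a genuinely different route.

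The paper does not rewrite $\gamma_m$ as a product of commutators with entries in $G_{\Lambda_k}$. Instead it splits $[\fk_m,\gk]$ and $[\fk_m,\hk^{-1}]$ separately as $\hat{\phi}(\fk_m)+\hat{\phi}(\gk\fk_m^{-1}\gk^{-1})$ and $\hat{\phi}(\fk_m)+\hat{\phi}(\hk^{-1}\fk_m^{-1}\hk)$, costing one defect each, and then observes that although $\gk$ itself need not lie in $G_{\Lambda_k}$, the element $\gk\fk_m^{-1}\gk^{-1}$ does (same flux as $\fk_m^{-1}$), and moreover
\[
\gk\fk_m^{-1}\gk^{-1}=(\gk\hk)\bigl(\hk^{-1}\fk_m^{-1}\hk\bigr)(\gk\hk)^{-1},
\]
so conjugation invariance (Lemma~\ref{lem:qm}(1), with conjugator $\gk\hk\in G_{\Lambda_k}$) forces the two tails to have the same $\hat{\phi}$-value. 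One more defect to combine the two commutator pieces gives $3D(\hat{\phi})$.

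Your approach front-loads the algebra: the identity $[a,bc]=[a,b]\cdot b[a,c]b^{-1}$ repackages $\gamma_m$ as a product of two commutators whose four entries are visibly in $G_{\Lambda_k}$, after which the argument is the one-line bound $|\hat{\phi}([x,y])|\le D(\hat{\phi})$ applied twice plus one defect to split the product. This is arguably cleaner, since it never requires evaluating $\hat{\phi}$ on elements like $\gk\fk_m^{-1}\gk^{-1}$ whose membership in $G_{\Lambda_k}$ needs a separate justification. The paper's version, on the other hand, makes the cancellation mechanism (the two $\hat{\phi}(\fk_m)$ terms and the two conjugates of $\fk_m^{-1}$) more transparent, which aligns well with the parallel computation for $\mu_P$ in Proposition~\ref{prop=intersection}.
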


\begin{proof}
  Since $\hat\phi$ is a quasimorphism on $G_{\Lambda_k}$, Lemma~\ref{lemma=haitteru}~(1) implies that
  \begin{gather*}
    \hat\phi([\fk_m,\gk])  = \hat\phi( \fk_m \gk \fk_m^{-1} \gk^{-1})  \sim_{D(\hat\phi)} \hat\phi( \fk_m  ) + \hat\phi( \gk \fk_m^{-1}\gk^{-1} ), \\
    \hat\phi([\fk_m,\hk^{-1}])  = \hat\phi( \fk_m \hk^{-1} \fk_m^{-1} \hk) \sim_{D(\hat\phi)} \hat\phi( \fk_m ) + \hat\phi( \hk^{-1} \fk_m^{-1}\hk ).
  \end{gather*}
     Recall from Lemma~\ref{lemma=haitteru}~(2) that $\gk \hk \in G_{\Lambda_k}$. Since $\hat\phi$ is homogeneous, by Lemma \ref{lem:qm}~(1) we have that
\begin{align*}
\hat\phi( \gk \fk_m^{-1}\gk^{-1} ) &= \hat\phi(  (\gk\hk) \hk^{-1} \fk_m^{-1}\hk (\gk\hk)^{-1}  ) \\
&= \hat\phi(  \hk^{-1} \fk_m^{-1}\hk  ).
\end{align*}
  Therefore,
  \begin{align*}
    \hat{\phi}(\gamma_m) & \sim_{D(\hat\phi)}
      \hat\phi([\fk_m,\gk]) + \phi([\fk_{m},\hk^{-1}]^{-1}) \\
      & = \hat\phi([\fk_m,\gk]) - \phi([\fk_{m},\hk^{-1}]) \\
      & \sim_{2D(\hat\phi)} \hat\phi( \gk \fk_m^{-1}\gk^{-1} ) - \hat\phi( \hk^{-1} \fk_m^{-1}\hk ) =  0. \qedhere
  \end{align*}
\end{proof}

\begin{prop}\label{prop=intersection}
 Let $\fk_m$, $\gk$, and $\hk$ be as in \eqref{eq:fg}, $\gamma_m$ as in \eqref{eq:gamma}, and $G_{\Lambda_k}$ as in the setting of Theorem $\ref{thm=non_ext}$.
Then the following hold true for every $m\in \NN$:
\begin{enumerate}[$(1)$]
  \item $\mu_P([{\fk}_m,\gk])=-m\sum_{j=1}^{\nn} b_jc_j$;
  \item $\mu_P([{\fk}_m,{\hk}^{-1}])=-m\sum_{j=1}^{\nn}  a_jd_j$;
  \item $\mu_P(\gamma_m)\sim_{D(\mu_P)}m \bb (v,w)$.
\end{enumerate}
\end{prop}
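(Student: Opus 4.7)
The approach rests on three ingredients. First, the images $\iota^j(P_\ee)$ are pairwise disjoint so that pushforwards under distinct $\iota^j_{\ast}$ commute. Second, the commutation relations of Lemma~\ref{lem=sigmatau} collapse commutators of powers into powers of commutators. Third, Proposition~\ref{prop=flux} records the values of $\mu_P$ on the base commutators $\iota^j_{\ast}([\sigma_{\qd_j},\tau_{\qd_j}])$ and $\iota^j_{\ast}([\sigma'_{\qd_j},\tau_{\qd_j}^{\prime -1}])$. The plan is to combine these so as to express each of $[\fk_m,\gk]$ and $[\fk_m,\hk^{-1}]$ as a product of pairwise commuting Hamiltonian factors whose $\mu_P$-values are explicit, then sum using Lemma~\ref{lem:qm}~(2) together with homogeneity of $\mu_P$, and finally deduce (3) from the quasimorphism inequality.

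For (1), disjointness of supports distributes the outer bracket as $[\fk_m,\gk]=\prod_{j=1}^{\nn}\iota^j_{\ast}([\sigma_{\qd_j}^m\sigma_{\qd_j}^{\prime m},\tau_{\qd_j}])$. Since $\sigma'_{\qd_j}$ commutes with $\tau_{\qd_j}$ by Lemma~\ref{lem=sigmatau}~(3), the $\sigma_{\qd_j}^{\prime m}$ factor drops out of the bracket, leaving $[\sigma_{\qd_j}^m,\tau_{\qd_j}]$. Next, Lemma~\ref{lem=sigmatau}~(1)---which asserts that $\sigma_{\qd_j}$ commutes with $\tau_{\qd_j}\sigma_{\qd_j}^{-1}\tau_{\qd_j}^{-1}$---justifies the rewriting $[\sigma_{\qd_j}^m,\tau_{\qd_j}]=\sigma_{\qd_j}^m(\tau_{\qd_j}\sigma_{\qd_j}^{-1}\tau_{\qd_j}^{-1})^m=[\sigma_{\qd_j},\tau_{\qd_j}]^m$. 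Each such factor is Hamiltonian by Lemma~\ref{ham representation}~(1), and the various $\iota^j_{\ast}$-pushforwards still commute pairwise; applying Lemma~\ref{lem:qm}~(2), homogeneity of $\mu_P$, and Proposition~\ref{prop=flux}~(1) then delivers the value $-m\sum_{j=1}^{\nn}b_jc_j$ asserted in (1).

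The argument for (2) is parallel but the commutativity switches roles: Lemma~\ref{lem=sigmatau}~(3) now supplies $[\sigma_{\qd_j},\tau'_{\qd_j}]=e$, so the $\sigma_{\qd_j}^m$ factor in $[\sigma_{\qd_j}^m\sigma_{\qd_j}^{\prime m},\tau_{\qd_j}^{\prime -1}]$ is displaced as a conjugation rather than cancelled, yielding $\sigma_{\qd_j}^m[\sigma_{\qd_j}^{\prime m},\tau_{\qd_j}^{\prime -1}]\sigma_{\qd_j}^{-m}$. Lemma~\ref{lem=sigmatau}~(2) then collapses the inner bracket to $[\sigma'_{\qd_j},\tau_{\qd_j}^{\prime -1}]^m$, which is Hamiltonian by Lemma~\ref{ham representation}~(2); the outer conjugation by $\iota^j_{\ast}(\sigma_{\qd_j}^m)\in\Symp_0(S,\omega)$ is invisible to $\mu_P$ thanks to the $\Symp_0(S,\omega)$-invariance in Theorem~\ref{thm=mu_P}~(1). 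Summing over $j$ by disjointness and applying Proposition~\ref{prop=flux}~(2) yields (2). Finally, (3) follows from the quasimorphism inequality and the identity $\mu_P(y^{-1})=-\mu_P(y)$ applied to $\gamma_m=[\fk_m,\gk][\fk_m,\hk^{-1}]^{-1}$, which combined with (1) and (2) gives $\mu_P(\gamma_m)\sim_{D(\mu_P)}m\sum_{j=1}^{\nn}(a_jd_j-b_jc_j)=m\cdot\bb(v,w)$. The main bookkeeping obstacle is step (2)---verifying the conjugation identity carefully before invoking $\Symp_0(S,\omega)$-invariance, which relies precisely on the commutativity $[\sigma_{\qd_j},\tau'_{\qd_j}]=e$ from Lemma~\ref{lem=sigmatau}~(3) and on knowing that the resulting bracket lies in $\Ham(S,\omega)$ via Lemma~\ref{ham representation}~(2).
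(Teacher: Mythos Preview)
Your proof is correct and follows essentially the same route as the paper: distribute the commutator over the disjoint images $\iota^j(P_\ee)$, use Lemma~\ref{lem=sigmatau} to collapse each factor to a power (or a conjugate of a power) of the basic commutator, then apply Lemma~\ref{lem:qm}~(2), homogeneity, Theorem~\ref{thm=mu_P}~(1), and Proposition~\ref{prop=flux}. The only cosmetic difference is that you explicitly invoke Lemma~\ref{ham representation} to justify that the local commutators lie in $\Ham(S,\omega)$, whereas the paper leaves this implicit (any single commutator in $\Symp_0$ has vanishing flux).
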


\begin{proof}
 By Lemma \ref{lem=sigmatau}~(3), $\sigma'_{\qd_j}$ commutes with $\tau_{\qd_j}$. By Lemma \ref{lem=sigmatau}~(1), $\sigma_{\qd_j}$ commutes with $\tau_{\qd_j} \sigma_{\qd_j}^{-1}\tau_{\qd}^{-1}$. Therefore,
    \begin{align*}
      [\fk_m,\gk] & = \prod_{j=1}^{\nn} \iota^j_{\ast}
      (\sigma_{\qd_j}^m {\sigma}_{\qd_j}^{\prime m} \tau_{\qd_j} {\sigma}_{\qd_j}^{\prime -m}\sigma_{\qd_j}^{-m}\tau_{\qd_j}^{-1} ) \\
      & = \prod_{j=1}^{\nn} \iota^j_{\ast}
      (\sigma_{\qd_j}^m  \tau_{\qd_j} \sigma_{\qd_j}^{-m}\tau_{\qd_j}^{-1} ) \\
      & = \prod_{j=1}^{\nn} \iota^j_{\ast}
      \left( (\sigma_{\qd_j})^m (\tau_{\qd_j} \sigma_{\qd_j}^{-1}\tau_{\qd}^{-1})^m \right) \\
      & = \prod_{j=1}^{\nn} \iota^j_{\ast} ( [\sigma_{\qd_j} , \tau_{\qd_j} ]^m ).
    \end{align*}
Hence, by Lemma \ref{lem:qm}~(2) and Proposition \ref{prop=flux}~(1),
\[
      \mu_P([ \fk_m , \gk ])
       = \sum_{j=1}^{\nn} \mu_P \Bigl(\iota^j_{\ast}([\sigma_{\qd},\tau_{\qd}]^m ) \Bigr)
       = m \sum_{j=1}^{\nn} \mu_P \Bigl(\iota^j_{\ast}([\sigma_{\qd},\tau_{\qd}] ) \Bigr)
       = -m \sum_{j=1}^{\nn} b_jc_j.
\]
 This completes the proof of (1).

 By Lemma \ref{lem=sigmatau}~(3), $\sigma_{\qd_j}$ commutes with $\tau'_{\qd_j}$. By Lemma \ref{lem=sigmatau}~(2), $\sigma'_{\qd_j}$ commutes with ${\tau}^{\prime-1}_{\qd_j} {\sigma}_{\qd_j}^{\prime-1} \tau_{\qd_j}$. Therefore,
\begin{align*}
  [\fk_m,\hk^{-1}] & = \prod_{j=1}^{\nn} \iota^j_{\ast}
  (\sigma_{\qd_j}^m {\sigma}_{\qd_j}^{\prime m} {\tau}^{\prime -1}_{\qd_j} {\sigma}_{\qd_j}^{\prime -m}\sigma_{\qd_j}^{-m}\tau'_{\qd_j} ) \\
  & = \prod_{j=1}^{\nn} \iota^j_{\ast}
  (\sigma_{\qd_j}^m {\sigma}_{\qd_j}^{\prime m} {\tau}^{\prime-1}_{\qd_j} {\sigma}_{\qd_j}^{\prime-m}\tau'_{\qd_j} \sigma_{\qd_j}^{-m}) \\
  & = \prod_{j=1}^{\nn} \iota^j_{\ast}
  \left( \sigma_{\qd_j}^m (\sigma'_{\qd_j})^m ({\tau}^{\prime-1}_{\qd_j} {\sigma}_{\qd_j}^{\prime-1} \tau_{\qd_j}^{\prime} )^m \sigma_{\qd_j}^{-m} \right) \\
  & = \prod_{j=1}^{\nn} \iota^j_{\ast}
  (\sigma_{\qd_j}^m [\sigma'_{\qd_j}, {\tau}^{\prime-1}_{\qd_j} ]^m \sigma_{\qd_j}^{-m}).
\end{align*}
Hence, by   Theorem \ref{thm=mu_P}~(1),  Lemma \ref{lem:qm}~(2) and Proposition \ref{prop=flux}~(2),  we conclude that
\[
      \mu_P([ \fk_m , \hk^{-1} ])
       = \sum_{j=1}^{\nn} \mu_P \Bigl(\iota^j_{\ast}([\sigma'_{\qd_j}, {\tau}^{\prime-1}_{\qd_j} ]^m) \Bigr)
       = m \sum_{j=1}^{\nn} \mu_P \Bigl(\iota^j_{\ast}([\sigma'_{\qd_j}, {\tau}^{\prime-1}_{\qd_j} ] ) \Bigr)
       = -m \sum_{j=1}^{\nn} a_jd_j .
\]
 This completes the proof of (2).

 Finally we prove (3). By (1) and (2), 
\begin{align*}
  \mu_{P}(\gamma_m) & \sim_{D(\mu_P)} \mu_P( [ \fk_m , \gk ] ) + \mu_P( [ \fk_m , \hk^{-1} ]^{-1} ) \\
 & = \mu_P( [ \fk_m , \gk ] ) - \mu_P( [ \fk_m, \hk^{-1} ] ) \\
 & = m\sum_{j=1}^{\nn}(a_id_i-b_ic_i) \\
 & = m \bb (v,w). \qedhere
  \end{align*}
\end{proof}

 Now we are ready to close up the proof of Theorem~\ref{thm=non_ext} (precise version of Theorem~\ref{thm A}).
\begin{proof}[Proof of Theorem~$\ref{thm=non_ext}$]
  Assume that $\mu_{P}$ is extendable to $G_{\Lambda_k}$.
 Take an extension $\mu_P^{\Lambda_k} \colon  G_{\Lambda_k} \to \RR$ of $\mu_P$.
  By Proposition \ref{prop=ext_estimate}, for every $m \in \NN$,
  \[ \mu_P^{\Lambda_k}(\gamma_m) \sim_{3 D(\mu_P^{\Lambda_k})} 0. \]
  Since $\gamma_m \in \Ham(S,\omega)$ for every $m \in \NN$, this means that
  \[ \mu_P(\gamma_m) \sim_{3 D(\mu_P^{\Lambda_k})} 0 \]
  for every $m \in \NN$. By Proposition \ref{prop=intersection}~(3), since $D(\mu_P) \leq D(\mu_P^{\Lambda_k})$,  we have that
  \[ m \bb (v,w) \sim_{4D(\mu_P^{\Lambda_k})} 0;  \]
   hence we obtain that
  \[ \bb (v,w) \sim_{4D(\mu_P^{\Lambda_k})/m } 0  \]
  for every $m \in \NN$.
By letting $m \to \infty$, we obtain that $\bb(v,w)=0$.
This contradicts the assumption that $\bb(v,w)=\frac1k \bb (\bar{v},\bar{w}) \neq 0$.
Therefore, $\mu_{P}$ is not extendable to $G_{\Lambda_k}$.
\end{proof}

\begin{remark} \label{remark tightness 1}
Let $\qd =(a,b,c,d) \in \RR$ satisfy $b=1$, $|c|\leq \ee$ and $|d| \leq \ee$. Set $f_i = \iota^i_{\ast} ( \sigma_{\qd} )$ for $i = 1, \cdots, l$.
Then $f_1, \cdots, f_l$ commute with each other since their supports are disjoint.
By Lemma \ref{local flux calculation}, the image
\[ \flux_\omega(f_1)=[\beta_1]^{\ast} , \cdots, \flux_\omega (f_l)=[\beta_l]^{\ast}   \]
of $f_1, \cdots, f_l$ under $\flux_\omega$ are linearly independent. Therefore, the inequality of Corollary \ref{Zn action} is tight.
\end{remark}

 \begin{remark}\label{remark=normalize}
  If $\mathrm{area}(S)> 1$, then we may modify the proof of Theorem~$\ref{thm=non_ext}$ in the following manner:  scale up $P_{\ee}$ by the factor $\sqrt{\mathrm{area}(S)}$, and replace the embeddings $\iota^1,\ldots,\iota^l$ with embeddings from the scaled $P_{\ee}$ into $S$. This modification provides the better upper bound of $k_0$ in the order of $l\cdot\frac{|\bar{w}|_{\max}}{\mathrm{area}(S)}$. We leave the details of this modification to the reader.
\end{remark}

\subsection{Proof of the Main Theorem }\label{subsection=proof_main}
 Here we establish our main theorem, Theorem~\ref{main thm} (Theorem~\ref{thm=main}).

\begin{proof}[Proof of Theorem~$\ref{main thm}$]
Let $\bar{v}=\flux_{\omega}(f)$ and $\bar{w}=\flux_{\omega}(g)$. Assume that $\bar{v}\smile \bar{w}\ne 0$.
Then, we can apply Theorem~\ref{thm A}, and we obtain $k_0$ in the statement.
Fix $k\in \NN$ with $k\geq k_0$. Let $v=\bar{v}$, $w=\bar{w}/k$, $\Lambda=\Lambda_k=\langle v,w\rangle$ and $G_{\Lambda}=\flux_{\omega}^{-1}(\Lambda)$. Then we have a short exact sequence:
\[
1 \to \Ham(S,\omega) \to G_{\Lambda} \xrightarrow{\flux_{\omega}|_{G_{\Lambda}}} \Lambda \to 1.
\]
Note that $\Lambda_1=\langle \bar{v},\bar{w}\rangle$ is a subgroup of $\Lambda$ of index $k(<\infty)$.
Since $fg=gf$, the map which sends $\bar{v}$ to $f$ and $\bar{w}$ to $g$ gives rise to a group homomorphism $s_1\colon \Lambda_1\to G_{\Lambda}$ with $(\flux_{\omega}|_{G_{\Lambda}}) \circ s_1=\mathrm{id}_{\Lambda_1}$. Hence, the short exact sequence above virtually splits, and Proposition~\ref{prop=discrete} applies.
Then it follows that Py's Calabi quasimorphism $\mu_P$ is extendable to $G_{\Lambda}$, which contradicts Theorem~\ref{thm A}. This completes the proof of Theorem~\ref{main thm}.
\end{proof}

\section{Proof of the $C^0$-version of the Main Theorem}\label{section=C^0}
As a generalization of the (volume) flux homomorphism, Fathi \cite{Fa} considered the \textit{mass flow homomorphism} for measure-preserving homeomorphisms.
Here, for simplicity, we only treat the following special case.

Let $S$ be a closed connected orientable surface
 and $\omega$ a symplectic form on $S$.
Let $\mathrm{Sympeo}(S,\omega)$ denote the $C^0$-closure of $\Symp(S,\omega)$ in the group of homeomorphisms, and
$\mathrm{Sympeo}_0(S,\omega)$ denote the identity component of $\mathrm{Sympeo}(S,\omega)$.
Then, there exists a homomorphism $\tilde\theta_\omega\colon \widetilde{\mathrm{Sympeo}}_0(S,\omega) \to H^1(S;\RR)$, where $\widetilde{\mathrm{Sympeo}}_0(S,\omega)$ is the universal covering of $\mathrm{Sympeo}_0(S,\omega)$.

If the genus of $S$ is at least two, then we have that $\tilde\theta_\omega\left(\pi_1\left(\mathrm{Sympeo}_0(S,\omega)\right)\right)=\{0\}$.
Indeed, since the center of the fundamental group of $S$ is trivial, \cite[Proposition 5.1]{Fa} implies that $\tilde\theta_\omega\left(\pi_1\left(\mathrm{Sympeo}_0(S,\omega)\right)\right)=\{0\}$.
Therefore, when the genus of $S$ is at least two, we obtain the homomorphism $\theta_\omega\colon \mathrm{Sympeo}_0(S,\omega)\to H^1(S;\RR)$ called the \textit{mass flow homomorphism} of $(S,\omega)$.

The mass flow homomorphism satisfies the following properties.
The group $\overline{\Ham(S,\omega)}^{C^0}$ is the $C^0$-closure of the group of $\Ham(S,\omega)$ in the group of homeomorphisms.
\begin{prop}[\cite{Fa}, see also \cite{CGHS}]\label{survey on mfh}
Let $S$ be a closed connected orientable surface whose genus is at least two, and $\omega$ a symplectic form on $S$.
  Then, the following hold true:
 \begin{enumerate}[$(1)$]
  \item $\theta_\omega|_{\Symp_0(S,\omega)}=\flux_\omega$;
  \item $\mathrm{Ker}(\theta_\omega)=\overline{\Ham(S,\omega)}^{C^0}$.
\end{enumerate}
\end{prop}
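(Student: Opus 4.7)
The plan is to follow Fathi's construction in \cite{Fa} and verify the two assertions separately. Recall that $\tilde\theta_\omega\colon \widetilde{\mathrm{Sympeo}}_0(S,\omega)\to H^1(S;\RR)$ is defined so that, for a continuous isotopy $\{\psi^t\}_{t\in[0,1]}$ from $\mathrm{id}$ to some $\phi\in\mathrm{Sympeo}_0(S,\omega)$ and for each $[\gamma]\in H_1(S;\ZZ)$, the pairing $\langle\tilde\theta_\omega([\{\psi^t\}]),[\gamma]\rangle$ records the signed area on $S$ swept by $\gamma$ along the path $\{\psi^t\}$. This area measurement is $C^0$-continuous in the isotopy, so the descended map $\theta_\omega$ on $\mathrm{Sympeo}_0(S,\omega)$ is a $C^0$-continuous homomorphism into $H^1(S;\RR)$.

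For (1), take $\phi\in\Symp_0(S,\omega)$ with a smooth symplectic isotopy $\{\psi^t\}$ generated by $X_t$, and a smooth loop $\gamma\colon S^1\to S$. The map $\Sigma_\gamma(t,\theta)=\psi^t(\gamma(\theta))$ is a smooth 2-chain; differentiating $t\mapsto\int_{\Sigma_{\gamma,[0,t]}}\omega$ and using that each $\psi^t$ is symplectic yields
\[
\int_{\Sigma_\gamma}\omega=\int_0^1\!\!\int_{\psi^t\circ\gamma}\iota_{X_t}\omega\,dt = \Bigl\langle\int_0^1[\iota_{X_t}\omega]\,dt,\,[\gamma]\Bigr\rangle,
\]
where the last equality uses that $\psi^t\circ\gamma$ is homologous to $\gamma$. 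Comparing this with the definition of $\widetilde{\flux}_\omega$ and with Fathi's area measurement gives $\tilde\theta_\omega=\widetilde{\flux}_\omega$ on $\widetilde{\Symp_0}(S,\omega)$; since $\Symp_0(S,\omega)$ is simply connected when $\nn\geq 2$ and the flux group is trivial, this descends to $\theta_\omega|_{\Symp_0(S,\omega)}=\flux_\omega$.

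For (2), the inclusion $\overline{\Ham(S,\omega)}^{C^0}\subseteq\mathrm{Ker}(\theta_\omega)$ is immediate from $C^0$-continuity of $\theta_\omega$, combined with item (1) and Proposition~\ref{survey on flux}(2). For the converse, let $f\in\mathrm{Ker}(\theta_\omega)$ and pick $f_n\in\Symp_0(S,\omega)$ with $f_n\to f$ in $C^0$; then $v_n:=\flux_\omega(f_n)$ tends to $0$ in $H^1(S;\RR)$ by continuity and (1). The crucial step is to produce $h_n\in\Symp_0(S,\omega)$ with $\flux_\omega(h_n)=v_n$ and $h_n\to\mathrm{id}$ uniformly; granting this, $h_n^{-1}f_n\in\Ham(S,\omega)$ by Proposition~\ref{survey on flux}(2), and $h_n^{-1}f_n\to f$, so $f\in\overline{\Ham(S,\omega)}^{C^0}$. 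To build $h_n$, fix harmonic 1-forms $\eta_1,\dots,\eta_{2\nn}$ representing a basis of $H^1(S;\RR)$, let $X_i$ be the symplectic vector field determined by $\iota_{X_i}\omega=\eta_i$, expand $v_n=\sum_i t_{n,i}[\eta_i]$ with $t_{n,i}\to 0$, and take $h_n$ to be the composition of the time-$t_{n,i}$ flows of the $X_i$; boundedness of the $X_i$ on the closed surface $S$ yields $h_n\to\mathrm{id}$ in $C^0$.

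The main obstacle is precisely this section-of-flux construction within (2): one must guarantee genuine $C^0$-smallness on the whole surface, not merely smallness of the flux class. Everything else is formal, reducing either to a direct Stokes-type computation (for (1)) or to continuity of $\theta_\omega$ together with the smooth case (for the easy inclusion of (2)). The section construction appears implicitly in \cite{Fa} and is made explicit in \cite{CGHS}, and once it is in place the argument above closes the proof.
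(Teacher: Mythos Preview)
The paper does not supply its own proof of this proposition: it is stated with the attribution \cite{Fa} (see also \cite{CGHS}) and used as a black box, with only a remark afterwards clarifying that Fathi's original domain coincides with $\mathrm{Sympeo}_0(S,\omega)$ via \cite{OM}. So there is no ``paper's proof'' to compare against; your sketch is essentially an outline of the argument in the cited literature.

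That said, your sketch is broadly correct but glosses over one point in the hard inclusion of (2). You approximate $f\in\mathrm{Ker}(\theta_\omega)\subset\mathrm{Sympeo}_0(S,\omega)$ by $f_n\in\Symp_0(S,\omega)$, but $\mathrm{Sympeo}_0(S,\omega)$ is defined as the identity component of the $C^0$-closure of $\Symp(S,\omega)$, not as the $C^0$-closure of $\Symp_0(S,\omega)$. That these agree (so that smooth approximants in the identity component exist) is exactly the content of the remark following the proposition in the paper, which invokes \cite[Theorem~5.1]{OM}. Once that identification is granted, your section-of-flux construction via harmonic representatives and small-time flows is the standard way to correct the flux and land in $\Ham(S,\omega)$, and the rest of your argument goes through.
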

In particular, we have the following short exact sequence of groups:
\[
1 \to \overline{\Ham(S,\omega)}^{C^0} \to \mathrm{Sympeo}_0(S,\omega) \xrightarrow{\theta_\omega} H^1(S;\RR) \to 1.
\]
\begin{remark}
Despite  the fact that the domain of the mass flow homomorphism in Fathi's original paper looks different from the above one, these two domains coincide.
See Oh and M$\ddot{\mathrm{u}}$ller \cite[Theorem 5.1]{OM} for the proof.
\end{remark}

Here, we restate our goal, Theorem~\ref{C0 main thm}.

\begin{thm}[Restatement of Theorem~$\ref{C0 main thm}$]\label{thm=C^0}
Let $S$ be a closed orientable surface whose genus $l$ is at least two and $\omega$ a symplectic form on $S$. Assume that $f,g \in \Sympeo_0(S,\omega)$ satisfy $fg = gf$. Then the cup product $\theta_\omega(f) \smile \theta_\omega(g)$ equals $0$.
\end{thm}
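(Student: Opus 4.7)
The plan is to mirror the proof of Theorem~\ref{main thm} from Subsection~\ref{subsection=proof_main}, but lift the entire argument to the $C^0$-exact sequence
\[
1 \to \overline{\Ham(S,\omega)}^{C^0} \to \Sympeo_0(S,\omega) \xrightarrow{\theta_\omega} H^1(S;\RR) \to 1
\]
of Proposition~\ref{survey on mfh}. Since $\mu_P$ itself is $C^0$-discontinuous (Remark~\ref{remark=disconti}) and is not a priori defined on $\overline{\Ham(S,\omega)}^{C^0}$, I would work with the homogeneous extension $\overline{\mu}_{P,B}$ of $\mu_{P,B} = \mu_P - \mu_B$ supplied by Corollary~\ref{cor=C^0-conti}. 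The crucial feature is that $\overline{\mu}_{P,B}$ is $\Sympeo_0(S,\omega)$-invariant by Corollary~\ref{cor=C^0-conti}~(2), which is exactly the hypothesis needed to feed it into Proposition~\ref{prop=discrete}.

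The contradiction argument proceeds as follows. Assume $\bar{v} \smile \bar{w} \neq 0$, where $\bar{v} = \theta_\omega(f)$ and $\bar{w} = \theta_\omega(g)$. Apply Theorem~\ref{thm=non_ext} to $\bar v, \bar w$ to obtain the integer $k_0$, fix any $k \geq k_0$, and set $\Lambda = \langle \bar{v}, \bar{w}/k\rangle$ and $\Lambda_1 = \langle \bar{v}, \bar{w}\rangle$, so that $\Lambda_1 \subset \Lambda$ has finite index. Writing $G_\Lambda^{C^0} := \theta_\omega^{-1}(\Lambda)$, restriction of the above sequence yields
\[
1 \to \overline{\Ham(S,\omega)}^{C^0} \to G_\Lambda^{C^0} \xrightarrow{\theta_\omega} \Lambda \to 1.
\]
Because $fg = gf$, the assignment $\bar v \mapsto f$, $\bar w \mapsto g$ extends to a group homomorphism $s_1 \colon \Lambda_1 \to G_\Lambda^{C^0}$ with $\theta_\omega \circ s_1 = \mathrm{id}_{\Lambda_1}$, so this sequence virtually splits in the sense of Proposition~\ref{prop=discrete}.

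Next I will apply Proposition~\ref{prop=discrete} together with Remark~\ref{remark=ext_homog} to the $G_\Lambda^{C^0}$-invariant homogeneous quasimorphism $\overline{\mu}_{P,B}$, obtaining a homogeneous quasimorphism $\widetilde{\mu}$ on $G_\Lambda^{C^0}$ that extends $\overline{\mu}_{P,B}$. Since $\Symp_0(S,\omega) \subset \Sympeo_0(S,\omega)$ and $\theta_\omega|_{\Symp_0(S,\omega)} = \flux_\omega$ by Proposition~\ref{survey on mfh}~(1), one has $G_\Lambda := \flux_\omega^{-1}(\Lambda) \subset G_\Lambda^{C^0}$. I then define
\[
\mu_P^{\Lambda} := \widetilde{\mu}|_{G_\Lambda} + \hat{\mu}_B|_{G_\Lambda},
\]
where $\hat{\mu}_B$ is the homogeneous extension of $\mu_B$ to $\Symp_0(S,\omega)$ provided by Theorem~\ref{thm=mu_P}~(2). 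Both summands are homogeneous quasimorphisms on $G_\Lambda$, so $\mu_P^{\Lambda}$ is too, and on $\Ham(S,\omega)$ it evaluates to $\mu_{P,B} + \mu_B = \mu_P$. This contradicts Theorem~\ref{thm=non_ext} for $k \geq k_0$, completing the proof.

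The main conceptual point I expect to have to emphasize is that Proposition~\ref{prop=discrete} is a purely algebraic statement and therefore applies verbatim to the topological short exact sequence above, so that no continuity of the extension $\widetilde{\mu}$ is required; the $\Sympeo_0(S,\omega)$-invariance of $\overline{\mu}_{P,B}$ from Corollary~\ref{cor=C^0-conti}~(2) is the sole ingredient, and the algebraic splitting $\mu_P = \mu_{P,B} + \mu_B$ is what allows the non-extendability Theorem~\ref{thm=non_ext} to be transferred from the smooth setting back to the $C^0$ setting. The deeper continuous extension results of Section~\ref{section=extension} play no role here; they will instead be needed for the extendability direction.
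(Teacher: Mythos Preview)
Your proposal is correct and follows essentially the same route as the paper: extend $\overline{\mu}_{P,B}$ to $\theta_\omega^{-1}(\Lambda)$ via Proposition~\ref{prop=discrete} using the virtual splitting coming from the commuting pair $f,g$, then restrict to the smooth group $G_\Lambda=\flux_\omega^{-1}(\Lambda)$ and add back $\hat{\mu}_B|_{G_\Lambda}$ to obtain an extension of $\mu_P$, contradicting Theorem~\ref{thm=non_ext}. Your emphasis that only the $\Sympeo_0(S,\omega)$-invariance of $\overline{\mu}_{P,B}$ (and not any continuity of the extension) is needed here matches the paper's argument exactly.
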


Before proceeding to the proof of Theorem~\ref{C0 main thm} (Theorem~\ref{thm=C^0}), we outline the strategy of the proof. Recall that in the proof of Theorem~\ref{main thm}, we discuss extendability/non-extendability of Py's Calabi quasimorphism $\mu_P\colon \Ham(S,\omega)\to \RR$.
However, since $\mu_P$ is not $C^0$-continuous (Remark~\ref{remark=disconti}), it is unclear whether $\mu_P$ extends to a quasimorphism on $\overline{\Ham(S,\omega)}^{C^0}$.
The gap lies here in the proof of Theorem~\ref{C0 main thm}.
We address this gap by employing $\mu_{P,B}=\mu_P-\mu_B\colon \Ham(S,\omega)\to \RR$, where $\mu_B$ is the quasimorphism defined by Brandenbursky \cite{Bra}; recall Subsections~\ref{subsec=mu_P_const}, \ref{subsec=mu_P_property} and \ref{subsec=conti}.

\begin{proof}[Proof of Theorem~$\ref{C0 main thm}$]
Set $\mu_{P,B}=\mu_P-\mu_B$, where $\mu_B$ is Brandenbursky's  Calabi  quasimorphism.
By Corollary~\ref{cor=C^0-conti}, we can take a $\mathrm{Sympeo}_0(S,\omega)$-invariant homogeneous quasimorphism $\overline{\mu}_{P,B}$ on $\overline{\Ham(S,\omega)}^{C^0}$ such that $\overline{\mu}_{P,B}|_{\Ham(S,\omega)}=\mu_{P,B}$. By Theorem~\ref{thm=mu_P}~(2), we can take  a quasimorphism $\hat{\mu}_B$ on $\Symp_0(S,\omega)$ such that $\hat{\mu}_B|_{\Ham(S,\omega)}=\mu_B$.

Let $\bar{v}=\theta_{\omega}(f)$ and $\bar{w}=\theta_{\omega}(g)$.
Suppose that $\bar{v}\smile \bar{w}\ne 0$.
Then, we can apply Theorem~\ref{thm A}, and we obtain $k_0$ in the statement of them.
Fix $k\in \NN$ with $k\geq k_0$. Let $v=\bar{v}$, $w=\bar{w}/k$, $\Lambda =\Lambda_k =\langle v,w\rangle$ and $\overline{G}_{\Lambda}=\theta_{\omega}^{-1}(\Lambda)$.
We also set $G_{\Lambda}=\flux_{\omega}^{-1}(\Lambda)$. Then we have the following two short exact sequences:

\begin{center}
  \begin{tikzpicture}[auto]
  \node (01) at (0, 1) {1}; 
  \node (11) at (2, 1) {$\overline{\Ham(S,\omega)}^{C^0}$};
  \node (21) at (4, 1) {$\overline{G}_{\Lambda}$};
  \node (31) at (6, 1) {$\Lambda$};
  \node (41) at (7, 1) {1,};
  \node (00) at (0, 0) {1}; 
  \node (10) at (2, 0) {$\Ham(S,\omega)$};
  \node (20) at (4, 0) {$G_{\Lambda}$};
  \node (30) at (6, 0) {$\Lambda$};
  \node (40) at (7, 0) {1.};
  \draw[->] (01) to node {} (11); 
  \draw[->] (11) to node {} (21);
  \draw[->] (21) to node {{\footnotesize $\theta_{\omega}|_{\overline{G}_{\Lambda}}$}} (31);
  \draw[->] (31) to node {} (41);
  \draw[->] (00) to node {} (10); 
  \draw[->] (10) to node {} (20);
  \draw[->] (20) to node {{\footnotesize $\flux_{\omega}|_{G_{\Lambda}}$}} (30);
  \draw[->] (30) to node {} (40);
  \end{tikzpicture}
\end{center}

Note that $\Lambda_1=\langle \bar{v},\bar{w}\rangle$ is a subgroup of $\Lambda$ of index $k(<\infty)$.
Since $fg=gf$, the map which sends $\bar{v}$ to $f$ and $\bar{w}$ to $g$ gives rise to a group homomorphism $s_1\colon \Lambda_1\to \overline{G}_{\Lambda}$ with $(\theta_{\omega}|_{\overline{G}_{\Lambda}}) \circ s_1=\mathrm{id}_{\Lambda_1}$.
Hence, the first short exact sequence virtually splits. Apply Proposition~\ref{prop=discrete} to the first short exact sequence and $\mu_{P,B}$.
Then, we have a homogeneous quasimorphism $\overline{\mu}_{P,B}^{\Lambda}$ on $\overline{G}_{\Lambda}$ such that $\overline{\mu}_{P,B}^{\Lambda}|_{\overline{\Ham(S,\omega)}^{C^0}}=\overline{\mu}_{P,B}$.
Now we switch our attention from the first short exact sequence to the second one.
Since $G_{\Lambda}$ is a subgroup of $\overline{G}_{\Lambda}$, we can set $\mu_{P,B}^{\Lambda}$ as $\mu_{P,B}^{\Lambda}=\overline{\mu}_{P,B}^{\Lambda}|_{G_{\Lambda}}$.
Then, $\mu_{P,B}^{\Lambda}|_{\Ham(S,\omega)}=\mu_{P,B}(=\mu_P-\mu_B)$ holds. Finally, set $\mu_P^{\Lambda}\colon G_{\Lambda}\to \RR$ by
\[
\mu_P^{\Lambda}=\mu_{P,B}^{\Lambda}+(\hat{\mu}_B|_{G_{\Lambda}}).
\]
Then, $\mu_P^{\Lambda}$ is a homogeneous quasimorphism and $\mu_P^{\Lambda}|_{\Ham(S,\omega)}=\mu_P$.
It follows that $\mu_P$ is extendable to $G_{\Lambda}$, which contradicts Theorem~\ref{thm A}. This completes the proof of Theorem~\ref{C0 main thm}.
\end{proof}

\section*{Acknowledgment}
We truly wish to thank the referees for useful comments and suggestions.
We also thank Professors Masayuki Asaoka and Kaoru Ono, who have drawn the authors' attention to Theorem~\ref{main thm}.
The first author is supported in part by JSPS KAKENHI Grant Number JP18J00765 and 21K13790.
 The second author is supported by JSPS KAKENHI Grant Number JP20H00114 and
 JST-Mirai Program Grant Number JPMJMI22G1.
The third author and the fourth author are supported in part by JSPS KAKENHI Grant Number 19K14536 and 17H04822, respectively.


\bibliographystyle{amsalpha}
\bibliography{KKMM0102}

\end{document}